\newtheorem{prelem}{{\bf Theorem}}
\newtheorem*{thm}{Informal Statement}
\newtheorem{theorem}{Theorem}[section]
\newtheorem{corollary}[theorem]{Corollary}
\newtheorem{definition}[theorem]{Definition}
\newtheorem{conjecture}[theorem]{Conjecture}
\newtheorem{claim}[theorem]{Claim}
\newtheorem{lemma}[theorem]{Lemma}
\newtheorem{proposition}[theorem]{Proposition}
\newtheorem{observation}[theorem]{Observation}
\newtheorem{remarka}[theorem]{Remark}
\newtheorem{examplea}[theorem]{Example}
\newenvironment{example}{\begin{examplea}\rm}{\hfill\rule{2mm}{2mm}\end{examplea}}
\def\Ex {{\mathbb E}}
\def\Pr {{\rm Pr}}
\def\bias{\textrm{bias}}
\def\rank{{\rm rank}}
\def\bias{{\rm bias}}
\def\exp{{\mathrm{e}_p}}
\def\x{\mathbf{x}}
\def\X{\mathbf{X}}
\def\Y{\mathbf{Y}}
\def\uu{\mathbf{u}}
\def\a{\mathbf{a}}
\def\P{\mathcal{P}}
\def\Q{\mathcal{Q}}
\def\F{\mathbb{F}}
\def\C{\mathbb{C}}
\def\eps{\varepsilon}
\def\N{\mathbb{N}}
\def\D{\mathbb{D}}
\def\E{\mathbb{E}}
\def\ff{\mathrm{f}}
\renewcommand{\gg}{\mathrm{g}}
\newcommand{\ip}[1]{\langle#1\rangle}
\newcommand{\ignore}[1]{}
\newcommand{\restate}[2]{\medskip
\noindent{\bf #1 (restated).}{\sl #2}}
\title{Higher-order Fourier analysis of $\mathbb{F}_p^n$ and the complexity of systems of linear forms}
\author{Hamed Hatami\\
School of Computer Science, McGill University, Montr\'eal,  Canada\\
hatami@cs.mcgill.ca
\and
Shachar Lovett\thanks{Supported by NSF grant DMS-0835373.}\\
School of Mathematics, Institute of Advanced Study, Princeton, USA\\
slovett@math.ias.edu}
\date{}
\begin{document}
\maketitle

\begin{abstract}
In this article we are interested in the density of small linear structures (e.g. arithmetic progressions)  in subsets $A$ of the group $\mathbb{F}_p^n$. It is possible to express these densities as certain analytic averages involving $1_A$, the indicator function of $A$. In the higher-order Fourier analytic approach, the function $1_A$ is decomposed as a sum $f_1+f_2$ where $f_1$ is structured in the sense that it has a simple higher-order Fourier expansion, and $f_2$ is pseudorandom in the sense that the $k$th Gowers uniformity norm of $f_2$, denoted by $\|f_2\|_{U^k}$, is small for a proper value of $k$.

For a given linear structure,  we find the smallest degree of uniformity $k$ such that assuming that $\|f_2\|_{U^k}$ is sufficiently small, it is possible to discard $f_2$ and replace $1_A$ with $f_1$,  affecting the corresponding analytic average only negligibly. Previously, Gowers and Wolf solved this problem for the case where $f_1$ is a constant function. Furthermore, our main result solves Problem 7.6 in
[W.~T. Gowers and J.~Wolf.
\newblock Linear forms and higher-degree uniformity for functions on
  $\mathbb{F}_p^n$.
\newblock {\em Geom. Funct. Anal.}, 21(1):36--69, 2011]  regarding the analytic averages that involve more than one subset of $\mathbb{F}_p^n$.
\end{abstract}

\noindent {{\sc AMS Subject Classification:}  11B30, 11T24}
\newline
{{\sc Keywords:} Gowers uniformity, higher-order Fourier analysis, true complexity;


\newpage
\tableofcontents
\newpage


\section{Introduction \label{sec:intro}}

In additive combinatorics one is often interested in the density of small linear structures (e.g. arithmetic progressions) in subsets of Abelian groups. It is possible to express these densities as certain analytic averages.  For example, consider a subset $A$ of a finite Abelian group $G$. Then the density of the $k$-term arithmetic progressions in $A$ is given by
\begin{equation}
\label{eq:nonDegkAP}
\Ex \left[1_A(X) 1_A(X+Y) \ldots 1_A(X+(k-1)Y)\right],
\end{equation}
where $X$ and $Y$ are independent random variables taking values in $G$ uniformly at random, and $1_A$ is the indicator function of $A$.  More generally, one is often interested in analyzing
\begin{equation}
\label{eq:linearAvgGeneralSets}
\Ex \left[1_{A}\left(\sum_{i=1}^k \lambda_{1,i} X_i
\right)\ldots 1_{A}\left(\sum_{i=1}^k
\lambda_{m,i} X_i\right)\right],
\end{equation}
where $X_1,\ldots, X_k$ are independent random variables taking values in $G$ uniformly at random, $\lambda_{i,j}$ are constants, and $A \subseteq G$. Analyzing averages of this type and understanding the relations between them is the core of many problems and results in additive combinatorics and analytic number theory, and there are theories which are developed for this purpose. The theory of uniformity, initiated by the proof of Szemer\'edi's theorem~\cite{MR0369312}, plays an important role in this area, and it was a major breakthrough when Gowers~\cite{MR1844079} introduced a new notion of uniformity in a Fourier-analytic proof for Szemer\'edi's theorem.

Gowers' work initiated an extension of the classical Fourier analysis, called higher-order Fourier analysis of Abelian groups. In this paper we are only interested in the case where the  group is $\mathbb{F}_p^n$, where $p$ is a fixed prime and $n$ is large. In the classical Fourier-analysis of $\mathbb{F}_p^n$, a function is expressed as a linear combination of the characters of $\F_p^n$, which are exponentials of linear polynomials; that is for $\alpha \in \F_p^n$, the corresponding character is defined
as $\chi_\alpha(x) = \exp(\sum_{i=1}^n \alpha_i x_i)$, where $\exp(m):=e^{\frac{2 \pi i}{p}m}$ for $m \in \F_p$. In higher-order Fourier analysis, the linear polynomials are replaced by higher degree polynomials, and one would like to express a function $f:\F_p^n \to \C$ as a linear combination of the functions $\exp(P)$, where $P$ is a polynomial of a certain degree.

Higher-order Fourier expansions are extremely useful in studying averages that are defined through linear structures. To analyze the average in (\ref{eq:linearAvgGeneral}), one usually decomposes the function $1_A$ as $f_1+f_2$, where $f_1$ has a simple higher-order Fourier expansion, while $f_2$ is ``quasirandom'' meaning that it shares certain properties with a random function, and can be discarded as random noise. More precisely, $\|f_1\|_\infty \le 1$ and there is a small constant $C$ such that $f_1=\sum_{i=1}^C c_i \exp(P_i)$ where $c_i$ are constants and $P_i$ are low degree polynomials, and $f_2$ is quasirandom in the sense that for some proper constant $k$, its $k$-th Gowers uniformity norm $\|f_2\|_{U^k}$ is small.

The $U^k$ norms increase as $k$ increases, and thus the condition that $\|f_2\|_{U^k}$ is small becomes stronger. Therefore a question  arises naturally: Given the average (\ref{eq:linearAvgGeneralSets}), what is the smallest $k$ such that under the assumption that $\|f_2\|_{U^k}$ is sufficiently small in a decomposition $1_A=f_1+f_2$, one can discard $f_2$, affecting the average only negligibly? This question was answered by Gowers and Wolf~\cite{gowers-wolf-2010} in the case that $f_1$ is a constant function, provided that the field size $p$ is not too small.
In this work we extend their result to the case where $f_1$ is an arbitrary bounded function.

More concretely, a linear form $L=(\lambda_1,\ldots,\lambda_k) \in \F_p^k$ maps every $\x=(x_1,\ldots,x_k) \in (\F_p^n)^k$ to $L(\x)=\sum_{i=1}^k \lambda_i x_i \in \F_p^n$. Let $\mathbb{D}$ denote the complex unit disk  $\{z \in \C: |z| \le 1\}$.
Gowers and Wolf~\cite{MR2578471} defined the {\em true complexity} of a system of linear forms $\mathcal{L}=\{L_1,\ldots,L_m\}$ as the minimal $d \ge 1$ such that the following holds: for
every $\eps>0$, there exists $\delta>0$ such that if $f:\F_p^n \to \D$ is a  function
with $\|f - \E[f]\|_{U^{d+1}} \le \delta$, then
$$
\left|\E_{\X \in (\F_p^n)^k}\left[\prod_{i=1}^m f(L_i(\X)) \right] - \E[f]^m \right|\le \eps.
$$
That is, as long as $\|f - \E[f]\|_{U^{d+1}}$ is small enough, we can approximate $f$ by the constant function $\E[f]$, affecting the average  only negligibly.

Gowers and Wolf~\cite{gowers-wolf-2010} fully characterized the true complexity of systems of linear forms\footnote{
Their result in fact requires the field size $p$ not to be too small. Our results share the same requirement.}. Let $L=(\lambda_1,\ldots,\lambda_k) \in \F_p^k$ be a linear form in $k$ variables. The $d$-th tensor power of $L$ is given by
$$
L^d = \left(\prod_{j=1}^d \lambda_{i_j}: i_1,\ldots,i_d \in [k]\right) \in \F_p^{k^d}.
$$
Gower and Wolf~\cite{gowers-wolf-2010} proved the following result, which characterizes the true complexity of a system of linear form as a simple linear algebraic condition.

\begin{thm}[Theorem~\ref{thm:true_compltexity_characterization}]
Provided that $p$ is sufficiently large, the true complexity of a system $\mathcal{L}=\{L_1,\ldots,L_m\}$ of linear forms is the minimal $d \ge 1$ such that $L_1^{d+1},\ldots,L_m^{d+1}$ are linearly independent.
\end{thm}

In this work, we show that the true complexity in fact allows to approximate $f$ by any other bounded function $g$, as long as $\|f-g\|_{U^{d+1}}$ is small. This removes the requirement that $g$ is a constant function, which was present in the work of Gowers and Wolf. We already mentioned that any bounded function $f$ can be decomposed as $f=f_1+f_2$ where
$f_1$ is ``structured'' and $f_2$ is ``quasirandom'' (see Theorem~\ref{thm:decompose}); our result thus shows that we can approximate $f$ by $f_1$ without affecting averages significantly. In the context of sets, Gowers and Wolf's result allows one to handle only {\em uniform} sets $A$, for which $1_A - \E[1_A]$ is pseudorandom, while our result allows one to handle {\em all} sets.

\begin{thm}[Theorem~\ref{thm:avg-approx-funcs}]
Let $\mathcal{L}=\{L_1,\ldots,L_m\}$ be a system of linear forms of true complexity $d$. If $p$ is sufficiently large, then for every $\eps>0$, there exists $\delta>0$ such that the following holds. Let $f,g:\F_p^n \to \D$ be  functions
such that $\|f - g\|_{U^{d+1}} \le \delta$. Then
$$
\left|\Ex_{\X \in (\F_p^n)^k} \left[\prod_{i=1}^m f(L_i(\X))\right] - \Ex_{\X \in (\F_p^n)^k} \left[\prod_{i=1}^m g(L_i(\X))\right]\right| \le \eps.
$$
\end{thm}

More generally, one may consider averages over several functions. The average in (\ref{eq:linearAvgGeneralSets}) is the probability that for every $j \in \{1,\ldots,m\}$, the linear combination $\sum_{i=1}^k \lambda_{j,i} X_i$ belongs to $A$. A more general case is the ``off-diagonal'' case, where instead of one subset $A$ there are $m$ subsets $A_1,\ldots,A_m \subseteq \mathbb{F}_p^n$, and one is interested in estimating the probability that for every $j \in \{1,\ldots,m\}$, we have $\sum_{i=1}^k \lambda_{j,i} X_i \in A_j$. Similar to the diagonal case, this can be expressed as an analytic average, and then one can decompose each function $1_{A_i}$ into structured and pseudorandom parts $1_{A_i}=g_i+h_{i}$, and once again the question arises of what level of uniformity suffices for discarding the pseudorandom parts without affecting the average significantly. Similar to the diagonal case, Gowers and Wolf~\cite{gowers-wolf-2010} resolved this problem when all $g_i$ are constant functions. In this work we extend also the off-diagonal case to the general case where $g_i$ can be arbitrary bounded functions.

\begin{thm}[Theorem~\ref{thm:avg-approx-funcs}]
Let $\mathcal{L}=\{L_1,\ldots,L_m\}$ be a system of linear forms of true complexity $d$. If $p$ is sufficiently large, then for every $\eps>0$, there exists $\delta>0$ such that the following holds. Let $f_i,g_i:\F_p^n \to \D$, $i \in [m]$ be functions
such that $\|f_i - g_i\|_{U^{d+1}} \le \delta$. Then
$$
\left|\Ex_{\X \in (\F_p^n)^k} \left[\prod_{i=1}^m f_i(L_i(\X))\right] - \Ex_{\X \in (\F_p^n)^k} \left[\prod_{i=1}^m g_i(L_i(\X))\right]\right| \le \eps.
$$
\end{thm}

It turns out that a more general phenomena holds, and all of the results above are immediate corollaries of the following theorem, which is the main technical contribution of this work. We show that in order to bound these averages, it suffices to have a single index $i$ such that $L_i^{d+1}$ is linearly independent of $\{L_j^{d+1}: j \ne i\}$ and that $\|f_i\|_{U^{d+1}}$ is small. This in particular resolves a conjecture of Gowers and Wolf~\cite{gowers-wolf-2010}.

\begin{thm}[Theorem~\ref{thm:strong_independence}]
Let $\mathcal{L}=\{L_1,\ldots,L_m\}$ be a system of linear forms. Assume that $L_1^{d+1}$ is not in the linear span of $L_2^{d+1},\ldots,L_m^{d+1}$. If $p$ is sufficiently large, then for every $\eps>0$, there exists $\delta>0$ such that for any functions $f_1,\ldots,f_m:\F_p^n \to \D$ with $\|f_1\|_{U^{d+1}} \le \delta$, we have
$$
\left| \Ex_{\X \in (\F_p^n)^k} \left[\prod_{i=1}^m f_i(L_i(\X)) \right] \right| \le \eps.
$$
\end{thm}

So far, we have only discussed which conditions allow discarding the pseudorandom terms. Note that after removing those terms, one arrives at an average of the  form
\begin{equation}
\label{eq:linearAvgGeneral}
\Ex_{X_1,\ldots,X_k \in \mathbb{F}_p^n}\left[f_1\left(\sum_{i=1}^k \lambda_{1,i} X_i
\right)\ldots f_m\left(\sum_{i=1}^k
\lambda_{m,i} X_i\right)\right],
\end{equation}
where each $f_j$ satisfies $\|f_j\|_\infty \le 1$ and has a simple higher-order Fourier expansion. For these expansions to be useful, one needs some kind of orthogonality or at least an approximation of it. The works of Green and Tao~\cite{GreenTaoFiniteFields} and Kaufman and Lovett~\cite{kaufman-lovett} provide an approximate orthogonality that can be used to analyze averages such as $\Ex_{X \in \mathbb{F}_p^n}[f_1(X)\ldots f_m(X)]$ in a straightforward manner when proper higher-order Fourier expansions of $f_1,\ldots,f_m$ are known. However, it is not a priori clear that these results can be applied to analyze more general averages of the form (\ref{eq:linearAvgGeneral}).
In Lemmas~\ref{lem:LinearFormsBias},~\ref{lem:dependencyShape}~and~\ref{lem:nonhomogen} we prove extensions of the results of Green and Tao~\cite{GreenTaoFiniteFields} that are applicable to such general averages. These extensions allow us to approximate (\ref{eq:avgLinearForms}) with a simple formula in terms of the higher-order Fourier coefficients of $f_1,\ldots,f_m$.

Lemmas~\ref{lem:LinearFormsBias},~\ref{lem:dependencyShape}~and~\ref{lem:nonhomogen} are quite useful, and in fact the proof of our main result,
Theorem~\ref{thm:strong_independence}, heavily relies on them. We also apply these lemmas to prove an invariance result (Proposition~\ref{prop:invariance}), which is one of the key tools in our subsequent paper~\cite{TestingPaper} which studies correlation testing for affine invariant properties on $\F_p^n$.

In the setting of functions on $\mathbb{Z}_N$, recently Green and Tao~\cite{GreenTaoRegularity} established similar results and characterized the true complexity of systems of linear forms.

\paragraph{Paper organization}
We give some basic definitions and notation in Section~\ref{sec:prelim}. We discuss the complexity of systems of linear forms, and formally state our main theorems in Section~\ref{sec:complexity_linear_forms}. We give an overview of higher-order Fourier analysis
in Section~\ref{sec:highFourier}. We prove a strong orthogonality result in Section~\ref{sec:strongOrth}. We then
use these to prove our main result, Theorem~\ref{thm:strong_independence}, in Section~\ref{sec:proof_main}. We conclude with some open problems in Section~\ref{sec:summary}.

\section{Definitions and notations}
\label{sec:prelim}
For a natural number $k$, denote $[k]:=\{1,\ldots,k\}$. The complex unit disk is denoted by $\mathbb{D}=\{z \in \C: |z| \le 1\}$. We will usually use the lower English letters $x,y,z$ to denote elements of $\F_p^n$. For $x \in \F_p^n$, and $i \in [n]$, $x(i)$ denotes the $i$-th coordinate of $x$, i.e. $x=(x(1),\ldots,x(n))$. We frequently need to work with the elements of $(\F_p^n)^k$, which we regard as vectors with $k$ coordinates. These elements are denoted with bold font e.g. $\x=(x_1,\ldots,x_k) \in (\F_p^n)^k$. Capital letters $X$, $Y$, \emph{etc} are used to denote random variables.
For an element $m \in \F_p$, we use the notation $\exp(m):=e^{\frac{2 \pi i}{p}m}$. We denote by $f, g,h$ functions from $\F_p^n$ to $\C$. We denote $n$-variate polynomials over $\F_p^n$ by $P,Q$. 

The \emph{bias} of a function $f:\F_p^n \rightarrow \C$ is defined to be the quantity
\begin{equation}
\label{eq:biasDef}
\bias(f) := \left| \Ex_{X \in \F_p^n}[f(X)] \right|.
\end{equation}
%
The inner product of two functions $f,g:\F_p^n \rightarrow \C$ is defined as
\begin{equation}
\label{eq:corrDef}
\ip{f,g} := \Ex_{X \in \F_p^n}[f(X)\overline{g(X)}].
\end{equation}

A \emph{linear form} in $k$ variables is a vector $L=(\lambda_1,\ldots,\lambda_k) \in \F_p^k$ regarded as a linear function from $V^k$ to $V$, for every vector space $V$ over $\F_p$: If $\x=(x_1,\ldots,x_k) \in V^k$, then $L(\x) := \lambda_1 x_1+\ldots+\lambda_k x_k$. A \emph{system of $m$ linear forms} in $k$ variables is a finite set $\mathcal{L}=\{L_1,\ldots,L_m\}$ of \emph{distinct} linear forms, each in $k$ variables. For a function $f:\F_p^n \rightarrow \C$, and a system of linear forms $\mathcal{L} = \{L_1,\ldots,L_m\}$ in $k$ variables, define
\begin{equation}
\label{eq:avgLinearForms}
t_\mathcal{L}(f) := \Ex\left[  \prod_{i=1}^m f(L_i(\X)) \right],
\end{equation}
where $\X$ is a random variable taking values uniformly in $(\F_p^n)^k$.

\begin{definition}[Homogeneous linear forms]
\label{def:homogen}
A system of linear forms $\mathcal{L} = \{L_1,\ldots,L_m\}$  in $k$ variables is called \emph{homogeneous} if for a uniform random variable $\X \in (\F_p^n)^k$, and every fixed $c \in \F_p^n$,  $(L_1(\X),\ldots,L_m(\X))$ has the same distribution as $(L_1(\X)+c,\ldots,L_m(\X)+c)$.
\end{definition}

We wish to identify two systems of linear forms $\mathcal{L}_0 = \{L_1,\ldots,L_m\}$ in $k_0$ variables, and $\mathcal{L}_1=\{L_1',\ldots,L_m'\}$ in $k_1$ variables if after possibly renumbering the linear forms, $(L_1(\X),\ldots,L_m(\X))$ has the same distribution as $(L'_1(\Y),\ldots,L'_m(\Y))$ where $\X$ and $\Y$ are uniform random variables taking values in $(\F_p^n)^{k_0}$ and $(\F_p^n)^{k_1}$, respectively.  Note that the distribution of $(L_1(\X),\ldots,L_m(\X))$ depends exactly on the linear dependencies between $L_1,\ldots,L_m$, and two systems of linear forms lead to the same distributions  if and only if they have the same linear dependencies.

\begin{definition}[Isomorphic linear forms]
\label{def:linearIsomorphic}
Two systems of linear forms $\mathcal{L}_0 $ and $\mathcal{L}_1$ are \emph{isomorphic} if and only if there exists
a \emph{bijection} from $\mathcal{L}_0$ to $\mathcal{L}_1$  that can be extended to an \emph{invertible} linear transformation $T: {\rm span}(\mathcal{L}_0) \rightarrow {\rm span}(\mathcal{L}_1)$.
\end{definition}

Note that if $\mathcal{L} = \{L_1,\ldots,L_m\}$ is a homogeneous system of linear forms, then $(L_1(\X),\ldots,L_m(\X))$ has the same distribution as $(L_1(\X)+Y,\ldots,L_m(\X)+Y)$, where $Y$ is a uniform random variable taking values in $\F_p^n$ and is independent of $\X$. We conclude with the following trivial observation.

\begin{observation}
\label{obs:canonical}
Every homogeneous system of linear forms is isomorphic to a system of linear forms in which there is a variable that appears with coefficient exactly one in every linear form.
\end{observation}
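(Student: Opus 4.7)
The plan is to reduce the distributional homogeneity hypothesis to a concrete algebraic statement about $\mathcal{L}$, and then perform an invertible change of variables. First, I would translate homogeneity into the algebraic condition that there exists $\z \in \F_p^k$ with $L_i(\z) = 1$ for every $i \in [m]$. Write $L_i = (\lambda_{i,1}, \ldots, \lambda_{i,k})$; the map $\x \mapsto (L_1(\x), \ldots, L_m(\x))$ is $\F_p$-linear from $(\F_p^n)^k$ to $(\F_p^n)^m$, so the distribution of $(L_1(\X), \ldots, L_m(\X))$ is uniform on its image, a subspace $V \subseteq (\F_p^n)^m$, while the shifted tuple is uniform on the coset $V + (c, \ldots, c)$. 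For these uniform distributions to agree for every $c \in \F_p^n$ one needs the diagonal vector $(c, \ldots, c)$ to lie in $V$ for every $c$, and unpacking this coordinate by coordinate in $\F_p^n$ gives exactly the existence of such a $\z \in \F_p^k$.

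Next, using $\z$, I would construct the change of variables. Since $L_i(\z) = 1 \ne 0$ the vector $\z$ is nonzero, so one can choose $S \in \GL_k(\F_p)$ whose first column is $\z$. Define new linear forms by $L_i'(\mathbf{y}) := L_i(S\mathbf{y})$, i.e., $L_i' = L_i \cdot S$ as a row vector. Then the coefficient of $y_1$ in $L_i'$ equals $L_i(\z) = 1$, so in the system $\mathcal{L}' = \{L_1', \ldots, L_m'\}$ the variable $y_1$ has coefficient one in every form. The $L_i'$ are distinct because $S$ is invertible, and the map $T : \F_p^k \to \F_p^k$ given by right multiplication by $S$ is an invertible linear transformation sending each $L_i$ to $L_i'$; it restricts to a linear isomorphism $\span(\mathcal{L}) \to \span(\mathcal{L}')$ realizing the bijection $L_i \mapsto L_i'$, so $\mathcal{L}$ and $\mathcal{L}'$ are isomorphic in the sense of Definition~\ref{def:linearIsomorphic}.

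The only non-trivial step is the first one --- extracting the algebraic criterion $L_i(\z) = 1$ from distributional homogeneity --- and even there the main point is that a linear image of the uniform distribution on a finite vector space is uniform on its image, so equality of distributions reduces to equality of supports. Once the criterion is in hand the rest is elementary linear algebra, and no further subtleties arise.
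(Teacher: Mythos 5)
Your proof is correct, but it takes a different route from the one the paper intends. The paper treats the observation as an immediate consequence of the remark preceding it: since $(L_1(\X),\ldots,L_m(\X))$ has the same distribution as $(L_1(\X)+Y,\ldots,L_m(\X)+Y)$ for an independent uniform $Y$, the $(k+1)$-variable system $\{(L_i,1): i\in[m]\}$ obtained by appending a fresh variable with coefficient one to every form induces the same distribution as $\mathcal{L}$, hence has the same linear dependencies, hence is isomorphic to $\mathcal{L}$ by the discussion preceding Definition~\ref{def:linearIsomorphic}. That route never needs to locate a point $\z$ with $L_i(\z)=1$, but it enlarges the number of variables and leans on the (asserted, not proved) equivalence between equality of distributions and equality of linear dependencies. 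You instead extract the algebraic criterion --- homogeneity holds iff there exists $\z\in\F_p^k$ with $L_i(\z)=1$ for all $i$ --- and then reparametrize the existing $k$ variables by a matrix $S\in\GL_k(\F_p)$ whose first column is $\z$. Both of your steps check out: a linear image of the uniform measure on $(\F_p^n)^k$ is indeed uniform on its image $V$, so the distributional identity reduces to $(c,\ldots,c)\in V$ for all $c$, and taking $c=e_1$ and reading off first coordinates produces the required $\z\in\F_p^k$ (conversely $\x=(z_1c,\ldots,z_kc)$ shows the criterion suffices); and $v\mapsto vS$ is an invertible linear map of $\F_p^k$ carrying $L_i$ to $L_i'$ and restricting to an isomorphism of spans, so Definition~\ref{def:linearIsomorphic} is satisfied and the $L_i'$ remain distinct. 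What your approach buys is a fully self-contained argument that keeps the number of variables fixed and makes explicit the clean characterization of homogeneity via the existence of $\z$; what the paper's approach buys is brevity, since the isomorphic system is written down in one line without any change of basis.
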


Next we define the Gowers uniformity norms. They are defined in the more general setting of arbitrary finite Abelian groups.
\begin{definition}[Gowers uniformity norms]
Let $G$ be a finite Abelian group and $f:G \rightarrow \C$.  For an integer $k \ge 1$, the $k$-th Gowers norm of $f$, denoted $\|f\|_{U^k}$ is defined by
\begin{equation}
\label{eq:GowersNorm}
\|f\|_{U^k}^{2^k} := \Ex \left[ \prod_{S \subseteq [k]} \mathcal{C}^{k-|S|} f\left(X + \sum_{i \in S} Y_i\right) \right],
\end{equation}
where $\mathcal{C}$ denotes the complex conjugation operator, and $X,Y_1,\ldots,Y_k$ are independent random variables taking values in $G$ uniformly at random.
\end{definition}
In this article we are only interested in the case where $G =\F_p^n$.
These norms were first defined in~\cite{MR1844079} in the case where $G$ is the group $\mathbb{Z}_N$. Note that $\|f\|_{U^1}=\left|\Ex[f(X)]\right|$, and thus $\|\cdot\|_{U^1}$
is a semi-norm rather than a norm.
The facts that the right-hand side of (\ref{eq:GowersNorm}) is always nonnegative, and that for $k>1$, $\| \cdot \|_{U^k}$ is actually a norm are easy to prove, but certainly not trivial (see~\cite{MR1844079} for a proof).

\section{Complexity of a system of linear forms}
\label{sec:complexity_linear_forms}
Let $\mathcal{L}=\{L_1,\ldots,L_m\}$ be a system of linear forms in $k$ variables.
Note that if $A \subseteq \F_p^n$ and $1_A:\F_p^n \to \{0,1\}$ is the indicator function of $A$, then $t_{\mathcal{L}}(1_A)$ is the probability that $L_1(\X),\ldots,L_m(\X)$ all fall in $A$, where $\X \in (\F_p^n)^k$ is uniformly chosen. Roughly speaking, we say $A \subseteq \F_p^n$ is pseudorandom with regards to $\mathcal{L}$ if
$$
t_{\mathcal{L}}(1_A) \approx \left( \frac{|A|}{p^n} \right)^m;
$$
that is if the probability that all $L_1(\X),\ldots,L_m(\X)$ fall in $A$ is close to what we would expect if $A$ was a random subset of $\F_p^n$ of size $|A|$. Let $\alpha = |A|/p^n$ be the density of $A$, and define $f:=1_A - \alpha$. We have
$$
t_{\mathcal{L}}(1_A)=t_{\mathcal{L}}(\alpha+f)=\alpha^m + \sum_{S \subseteq [m], S \ne \emptyset} \alpha^{m-|S|}  t_{\{L_i: i \in S\}}(f).
$$
So, a sufficient condition for $A$ to be pseudorandom with regards to $\mathcal{L}$ is that $t_{\{L_i: i \in S\}}(f) \approx 0$ for all nonempty subsets $S \subseteq [m]$. Green and Tao~\cite{GreenTaoLinear} showed that a sufficient condition for this to occur is that $\|f\|_{U^{s+1}}$ is small enough, where $s$ is the {\em Cauchy-Schwarz complexity} of the system of linear forms.

\begin{definition}[Cauchy-Schwarz complexity]
\label{dfn:cauchy-schwarz-complexity}
Let $\mathcal{L}=\{L_1,\ldots,L_m\}$ be a system of linear forms. The {\em Cauchy-Schwarz complexity} of $\mathcal{L}$ is the minimal $s$ such that the following holds. For every $1 \le i \le m$, we can partition $\{L_j\}_{j \in [m] \setminus \{i\}}$ into $s+1$ subsets, such that $L_i$ does not belong to the linear span of each such subset.
\end{definition}
The reason for the term {\em Cauchy-Schwarz complexity} is the following lemma due to Green and Tao~\cite{GreenTaoLinear}, whose proof is based on a clever iterative application of the Cauchy-Schwarz inequality.
\begin{lemma}[\cite{GreenTaoLinear}]
\label{lemma:cauchy-schwarz-lemma}
Let $f_1,\ldots,f_m:\F_p \to \D$. Let $\mathcal{L}=\{L_1,\ldots,L_m\}$ be a system of $m$ linear forms in $k$ variables of Cauchy-Schwarz complexity $s$. Then
$$
\left| \Ex_{\X \in (\F_p^n)^k} \left[ \prod_{i=1}^m f_i (L_i(\X)) \right]\right| \le \min_{1 \le i \le m} \|f_i\|_{U^{s+1}}.
$$
\end{lemma}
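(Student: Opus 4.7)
The plan is to prove $|t_\mathcal{L}(f_1, \ldots, f_m)| \le \|f_1\|_{U^{s+1}}$; the bound on the minimum then follows by relabeling. Let $\mathcal{P}_1, \ldots, \mathcal{P}_{s+1}$ denote the partition of $\{L_2, \ldots, L_m\}$ witnessing Cauchy-Schwarz complexity $s$, so that $L_1 \notin \mathrm{span}(\mathcal{P}_\ell)$ for each $\ell$. By linear duality in $\F_p^k$, I would pick vectors $v_1, \ldots, v_{s+1} \in \F_p^k$ with $\langle L_1, v_\ell\rangle \ne 0$ and $\langle L_i, v_\ell\rangle = 0$ for every $i \in \mathcal{P}_\ell$. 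The shift $\X \mapsto \X + tv_\ell$ then leaves $L_i(\X)$ with $i \in \mathcal{P}_\ell$ unchanged while moving $L_1(\X)$ by a nonzero amount.

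The engine is the following single Cauchy-Schwarz step, which I plan to iterate. Suppose $F : (\F_p^n)^k \to \C$ factors as $F = GH$ with $|H| \le 1$ and $H(\X + tv) = H(\X)$ for all $t \in \F_p^n$. Pulling $H$ out of the inner average $\E_\X F = \E_\X H(\X)\,\E_t G(\X + tv)$ and applying Cauchy-Schwarz in $\X$ yields
\[
|\E_\X F(\X)|^2 \le \E_\X |H(\X)|^2 \cdot \E_\X |\E_t G(\X + tv)|^2 \le \E_{\X, h \in \F_p^n} G(\X)\,\overline{G(\X + hv)}.
\]
The right-hand side is again a real nonnegative average of a product of conjugated evaluations of the original $f_i$'s on a slightly enlarged variable space, so the step can be iterated.

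I would apply this step $s+1$ times, using direction $v_\ell$ at stage $\ell$. At each stage, the invariant factor $H$ collects the doubled copies of $f_i$ with $i \in \mathcal{P}_\ell$; these are $v_\ell$-invariant because $\langle L_i, v_\ell\rangle = 0$, and because the auxiliary shift variables $h_1, \ldots, h_{\ell-1}$ introduced previously do not involve the $v_\ell$ direction. A straightforward induction then yields, after $\ell$ steps,
\[
|t_\mathcal{L}|^{2^\ell} \le \E_{\X, h_1, \ldots, h_\ell} \prod_{i \notin \mathcal{P}_1 \cup \cdots \cup \mathcal{P}_\ell}\; \prod_{\omega \in \{0,1\}^\ell} \mathcal{C}^{|\omega|} f_i\Bigl(L_i(\X) + \sum_{j=1}^\ell \omega_j h_j \langle L_i, v_j\rangle\Bigr).
\]
Taking $\ell = s+1$, the index set reduces to $\{1\}$. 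Since $L_1 \ne 0$ (required for $\mathcal{L}$ to have finite Cauchy-Schwarz complexity), $L_1(\X)$ is uniform on $\F_p^n$, and since each $\langle L_1, v_j\rangle \ne 0$ the rescaling $h_j \mapsto h_j \langle L_1, v_j\rangle$ is a bijection on $\F_p^n$. After these substitutions the right-hand side becomes exactly $\|f_1\|_{U^{s+1}}^{2^{s+1}}$, yielding the claim.

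The hard part is the bookkeeping in the induction: at each stage one has to check that the factors designated as $v_{\ell+1}$-invariant really remain invariant after the previous $\ell$ doublings. This works because each direction $v_j$ shifts only the $\X$ coordinates (and not the auxiliary $h_i$'s), and because linearity propagates the condition $\langle L_i, v_{\ell+1}\rangle = 0$ unchanged through the Gowers-type doubling. Once this invariance is verified, the rest is a routine iterated application of Cauchy-Schwarz.
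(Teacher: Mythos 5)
Your argument is correct and is precisely the iterated Cauchy--Schwarz proof that the paper attributes to Green and Tao (the lemma is stated there as a citation, without an included proof). The duality choice of the directions $v_\ell$, the single Cauchy--Schwarz step with a $v_\ell$-invariant factor $H$, and the final change of variables using $\langle L_1, v_j\rangle \neq 0$ and $L_1 \neq 0$ all check out, so there is nothing to add.
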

Note that the Cauchy-Schwarz complexity of any system of $m$ linear forms in which any two linear forms are linearly independent (i.e. one is not a multiple of the other) is at most $m-2$, since we can always partition $\{L_j\}_{j \in [m] \setminus \{i\}}$ into the $m-1$ singleton subsets.

The following is an immediate corollary of Lemma~\ref{lemma:cauchy-schwarz-lemma}.

\begin{corollary}
\label{cor:cauchy-schwarz-complexity-bound}
Let $\mathcal{L}=\{L_1,\ldots,L_m\}$ be a system of linear forms in $k$ variables of Cauchy-Schwarz complexity $s$.
Let $f_i,g_i:\F_p^n  \to \D$ be functions for $1 \le i \le m$. Assume that $\|f_i-g_i\|_{U^{s+1}} \le \frac{\eps}{2 m}$ for all $1 \le i \le m$. Then
$$
\left|\Ex_{\X} \left[\prod_{i=1}^m f_i(L_i(\X))\right] - \Ex_{\X}\left[ \prod_{i=1}^m g_i(L_i(\X)) \right]\right| \le \eps,
$$
where $\X \in (\F_p^n)^k$ is uniform.
\end{corollary}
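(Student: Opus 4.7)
The plan is to prove the corollary by a standard hybrid (telescoping) argument, invoking Lemma~\ref{lemma:cauchy-schwarz-lemma} once for each coordinate. First, I write
\begin{equation*}
\prod_{i=1}^m f_i(L_i(\X)) - \prod_{i=1}^m g_i(L_i(\X)) = \sum_{j=1}^m \Bigl(\prod_{i<j} f_i(L_i(\X))\Bigr)\bigl(f_j - g_j\bigr)(L_j(\X))\Bigl(\prod_{i>j} g_i(L_i(\X))\Bigr),
\end{equation*}
so by linearity of expectation and the triangle inequality it suffices to bound each of the $m$ summands on the right-hand side by $\eps/m$ in absolute value.

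Next, I handle each telescoping term by applying Lemma~\ref{lemma:cauchy-schwarz-lemma}. The mild trouble is that $f_j-g_j$ takes values in the disk of radius $2$ rather than in $\D$, so Lemma~\ref{lemma:cauchy-schwarz-lemma} does not apply directly. I first rescale: set $h_j := (f_j-g_j)/2$, which maps into $\D$ and satisfies $\|h_j\|_{U^{s+1}} = \tfrac{1}{2}\|f_j-g_j\|_{U^{s+1}} \le \eps/(4m)$. The $j$-th telescoping term is then twice the average
\begin{equation*}
\Ex_{\X}\Bigl[\prod_{i<j} f_i(L_i(\X)) \cdot h_j(L_j(\X)) \cdot \prod_{i>j} g_i(L_i(\X))\Bigr],
\end{equation*}
a product of $m$ functions all taking values in $\D$. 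Applying Lemma~\ref{lemma:cauchy-schwarz-lemma} to this system, and specializing to the index $i=j$ in the minimum on the right-hand side, bounds this quantity by $\|h_j\|_{U^{s+1}} \le \eps/(4m)$, so the $j$-th telescoping term has absolute value at most $\eps/(2m)$.

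Summing the $m$ telescoping terms yields the claimed bound $m \cdot \eps/(2m) = \eps/2 \le \eps$. There is no real obstacle here: the argument is purely formal given Lemma~\ref{lemma:cauchy-schwarz-lemma}, and the only subtlety is the factor of $2$ from passing from $f_j - g_j$ to $h_j$, which is precisely what the quantitative hypothesis $\|f_i-g_i\|_{U^{s+1}} \le \eps/(2m)$ is calibrated to absorb.
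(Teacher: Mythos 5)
Your proof is correct and follows essentially the same route the paper intends: the paper states this as an ``immediate corollary'' of Lemma~\ref{lemma:cauchy-schwarz-lemma}, and the identical telescoping argument appears explicitly in the proof of the first claim in Section~\ref{sec:proof_main}. Your extra care with the factor of $2$ (rescaling $f_j-g_j$ into $\D$ before invoking the lemma) is a point the paper glosses over, and it is handled correctly.
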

In particular, if $A \subseteq \F_p^n$ of size $|A|=\alpha p^n$ satisfies $\|1_A - \alpha\|_{U^{s+1}} \approx 0$, then $t_{\mathcal{L}}(1_A) \approx \alpha^m$.


\subsection{The true complexity}
The Cauchy-Schwarz complexity of $\mathcal{L}$ gives an upper bound on $s$, such that if $\|1_A - \alpha\|_{U^{s+1}}$ is small enough, then $A$ is pseudorandom with regards to $\mathcal{L}$. Gowers and Wolf~\cite{MR2578471} defined the {\em true complexity} of a system of linear forms as the minimal $s$ such that the above condition holds for all sets $A$.
\begin{definition}[True complexity~\cite{MR2578471}]
\label{def:trueComplexity}
Let $\mathcal{L}=\{L_1,\ldots,L_m\}$ be a system of linear forms over $\F_p$.
The true complexity of $\mathcal{L}$ is the smallest $d \in \N$ with the following property. For every $\eps>0$,
there exists  $\delta>0$ such that if $f : \F_p^n \rightarrow \D$ satisfies $\|f\|_{U^{d+1}} \le \delta$, then
$$\left| t_{\mathcal{L}}(f) \right|\le  \eps.$$
\end{definition}
An obvious bound on the true complexity is the Cauchy-Schwarz complexity of the system. However, there are cases where this is not tight. Gowers and Wolf~\cite{gowers-wolf-2010} characterized the true complexity of systems of linear forms, assuming the field is not too small. For a linear form $L \in \F_p^k$, let $L^{d} \in \F_p^{k^d}$ be the $d$th tensor power of $L$. That is, if $L=(\lambda_1,\ldots,\lambda_k)$, then
$$
L^{d} = \left(\prod_{j=1}^d \lambda_{i_j}: i_1,\ldots,i_d \in [k]\right) \in \mathbb{F}_p^{k^d}.
$$

\begin{theorem}[Characterization of the true complexity of linear systems, Theorem 6.1 in~\cite{gowers-wolf-2010}]
\label{thm:true_compltexity_characterization}
Let $\mathcal{L}=\{L_1,\ldots,L_m\}$ be a system of linear forms over $\F_p^n$ of Cauchy-Schwarz complexity $s \le p$. The true complexity of $\mathcal{L}$ is the minimal $d$ such that $L_1^{d+1},\ldots,L_m^{d+1}$ are linearly independent over $\F_p$.
\end{theorem}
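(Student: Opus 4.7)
The characterization claims two inequalities: the true complexity $d_{tc}$ is at most $d$ when $L_1^{d+1},\ldots,L_m^{d+1}$ are linearly independent, and strictly greater than $d$ when they are linearly dependent. I would attack these two directions separately.

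The upper bound $d_{tc}\le d$ is essentially a direct corollary of the paper's main technical result, Theorem~\ref{thm:strong_independence}. Linear independence of all the $(d+1)$-th tensors in particular means that $L_1^{d+1}$ does not lie in $\mathrm{span}\{L_i^{d+1}:i\ge 2\}$. Applying Theorem~\ref{thm:strong_independence} with $f_1:=f$ (which by hypothesis has small $U^{d+1}$ norm) and $f_i:=f$ for $i\ge 2$ (all of which lie in $\mathbb{D}$) immediately yields $|t_\mathcal{L}(f)|\le\eps$ whenever $\|f\|_{U^{d+1}}\le\delta$.

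For the lower bound, I would suppose $L_1^{d+1},\ldots,L_m^{d+1}$ are linearly dependent, fix a non-trivial relation $\sum_i c_i L_i^{d+1}=0$, and construct $f\in\mathbb{D}$ with $\|f\|_{U^{d+1}}\to 0$ but $|t_\mathcal{L}(f)|$ bounded below. The key algebraic ingredient is that, provided $p>d$ (which follows from the hypothesis $s\le p$ together with $d_{tc}\le s$), this tensor identity is equivalent to the polynomial identity $\sum_i c_i P(L_i(\x))\equiv 0$ in $\F_p[\x]$ for every homogeneous polynomial $P$ of degree $d+1$; the transfer is because expanding $P\circ L_i$ in the symmetric multilinear basis produces the entries of $L_i^{d+1}$ weighted by multinomial coefficients that are invertible in $\F_p$. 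I would then take $P$ to be a homogeneous degree-$(d+1)$ polynomial whose associated symmetric $(d+1)$-linear form has rank tending to infinity with $n$, and set
\[
f(x)=\frac{1}{p-1}\sum_{\lambda\in\F_p^\star}\omega_\lambda\exp(\lambda P(x))
\]
for suitable weights $\omega_\lambda\in\mathbb{D}$. The strong orthogonality results of Section~\ref{sec:strongOrth} (Lemmas~\ref{lem:LinearFormsBias}--\ref{lem:nonhomogen}) imply $\|\exp(\lambda P)\|_{U^{d+1}}\to 0$ for every $\lambda\ne 0$, so $\|f\|_{U^{d+1}}\to 0$ by the triangle inequality. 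Expanding,
\[
t_\mathcal{L}(f)=\frac{1}{(p-1)^m}\sum_{\vec\lambda\in(\F_p^\star)^m}\omega_{\lambda_1}\cdots\omega_{\lambda_m}\,\Ex_\x\exp\Big(\textstyle\sum_i\lambda_i P(L_i(\x))\Big),
\]
and the inner expectation equals $1$ when $\vec\lambda$ lies in the dependence subspace $V=\{\vec\lambda:\sum_i\lambda_i L_i^{d+1}=0\}$ (by the polynomial identity) and is $o(1)$ off $V$ (there the combination $\sum\lambda_i L_i^{d+1}$ is nonzero, so $\sum\lambda_i P(L_i(\x))$ has a high-rank degree-$(d+1)$ part and equidistributes). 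Choosing the $\omega_\lambda$ so that the elements of $V\cap(\F_p^\star)^m$ contribute constructively yields $|t_\mathcal{L}(f)|\ge\eps_0>0$.

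The main obstacle in the lower bound is the degenerate case in which every nonzero vector in $V$ has at least one vanishing coordinate, so that $V\cap(\F_p^\star)^m=\emptyset$ and the construction above contributes nothing. In that situation one must enrich the family of test phases, for instance by averaging over affine translates $\exp(\lambda P+\ell)$ with a linear form $\ell$ (this introduces controlled low-order content) or by first applying Observation~\ref{obs:canonical} to normalize the system around a shared variable and then activating the "missing" coordinates through that variable. The technical core of the argument is verifying that this enlarged construction still remains in $\mathbb{D}$ and has $\|f\|_{U^{d+1}}\to 0$, while producing enough constructive contributions from $V$ to keep $|t_\mathcal{L}(f)|$ bounded below independently of $n$.
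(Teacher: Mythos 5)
This theorem is imported from Gowers--Wolf and the paper offers no proof of it, so there is nothing internal to compare against; I will therefore assess your argument on its own terms. Your upper bound is sound: Theorem~\ref{thm:strong_independence} is proved in Section~\ref{sec:proof_main} without appealing to the present characterization, so there is no circularity, and applying it with $f_1=\dots=f_m=f$ gives exactly the property required by Definition~\ref{def:trueComplexity} once the $(d+1)$-st tensor powers are independent. Your lower-bound construction is also the standard one and does work in the non-degenerate case: taking $\omega_\lambda\equiv 1$ gives $f=\frac{1}{p-1}\left(p\cdot 1_{\{P=0\}}-1\right)\in\D$, each $\exp(\lambda P)$ has small $U^{d+1}$ norm when $P$ has high rank, and every $\vec\lambda\in V\cap(\F_p^\star)^m$ contributes $+1$, so $|t_{\mathcal L}(f)|$ is bounded below by roughly $|V\cap(\F_p^\star)^m|\,(p-1)^{-m}$.

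The genuine gap is the degenerate case you flag at the end, and it cannot be repaired by enlarging the family of test phases, because with the definition of true complexity actually used in this paper (Definition~\ref{def:trueComplexity}: $\|f\|_{U^{d+1}}\le\delta\Rightarrow|t_{\mathcal L}(f)|\le\eps$, with no mean subtraction) the claimed lower bound is \emph{false} in that case. Take $p\ge 5$ and $\mathcal L=\{x_1,\,x_1+x_2,\,x_1+2x_2,\,x_1+3x_2,\,x_3\}$, a four-term progression together with an isolated form. The squares $L_i^{2}$ are dependent (four symmetric $2$-tensors in a three-dimensional space) while the cubes are independent, so the characterization asserts true complexity $2$; every dependency among the squares has last coordinate zero, so $V\cap(\F_p^\star)^5=\emptyset$. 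But $t_{\mathcal L}(f)$ factors as $\Ex\left[\prod_{i=1}^4 f(L_i(\X))\right]\cdot\Ex[f]$, hence $|t_{\mathcal L}(f)|\le\|f\|_{U^{1}}\le\|f\|_{U^{2}}$ unconditionally, and the defining property already holds at $d=1$. No choice of $f$ with small $U^{2}$ norm can make $|t_{\mathcal L}(f)|$ large, so your proposed patches (affine translates, activating missing coordinates through a shared variable) are doomed here. The resolution is that the theorem is only correct under the Gowers--Wolf formulation quoted in the introduction, namely $\|f-\Ex[f]\|_{U^{d+1}}\le\delta\Rightarrow|t_{\mathcal L}(f)-\Ex[f]^m|\le\eps$: there the constant $\Ex[f]$ supplies the coordinates absent from the dependency, and one argues by writing $f=\alpha+h$, expanding $\prod_i f(L_i(\x))$ over subsets, and extracting the subset equal to the support of the dependency. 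If you rework the lower bound in that setting (and note that you need $d+1<p$, not merely $p>d$, for the symmetric multilinear correspondence and the nonvanishing of the multinomial coefficients $\ell_{d+1}(\uu)$), the argument goes through.
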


A natural generalization is to allow for multiple sets. Let $A_1,\ldots,A_m \subseteq \F_p^n$ be sets of densities $\alpha_1,\ldots,\alpha_m$. Let $\mathcal{L}=\{L_1,\ldots,L_m\}$ be a system of linear forms over $\F_p$. We say $A_1,\ldots,A_m$ are pseudorandom with respect to $L_1,\ldots,L_m$ if
$$
\Pr_{\X \in (\F_p^n)^k}[L_1(\X) \in A_1,\ldots, L_m(\X) \in A_m] \approx \alpha_1 \cdot \ldots \cdot \alpha_m.
$$
Analogously to the case of a single set, let $f_i = 1_{A_i} - \alpha_i$. Then a sufficient condition is that for all nonempty subsets $S \subseteq [m]$, we have
$$
\Ex \left[ \prod_{i \in S} f_i(L_i(\X)) \right]\approx 0.
$$

In~\cite{gowers-wolf-2010}, Gowers and Wolf showed that if $\mathcal{L}$ has true complexity $d$ and $\|f_1\|_{U^{d+1}},\ldots,\|f_m\|_{U^{d+1}}$ are small enough, then this stronger condition also holds.

\begin{theorem}[Theorem 7.2 in~\cite{gowers-wolf-2010}]
\label{thm:gowers-wolf-multiple-functions}
Let $\mathcal{L}=\{L_1,\ldots,L_m\}$ be a system of linear forms over $\F_p^n$ of Cauchy-Schwarz complexity  $s \le p$ and true complexity $d$. Then for every $\eps>0$, there exists $\delta>0$ such that the following holds. Let $f_1,\ldots,f_m:\F_p^n \to \mathbb{D}$ be functions such that $\|f_i\|_{U^{d+1}} \le \delta$, for all $1 \le i \le m$. Then for all nonempty subsets $S \subseteq [m]$ we have
$$
\left| \Ex_{\X \in (\F_p^n)^k}\left[ \prod_{i \in S} f_i(L_i(\X)) \right]\right| \le \eps.
$$
\end{theorem}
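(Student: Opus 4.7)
The plan is to deduce this directly from the paper's main technical result, Theorem~\ref{thm:strong_independence}, together with the Gowers--Wolf characterization of true complexity, Theorem~\ref{thm:true_compltexity_characterization}. Since $\mathcal{L}$ has true complexity $d$, the characterization theorem implies that the tensor powers $L_1^{d+1},\ldots,L_m^{d+1}$ are linearly independent over $\F_p$. Linear independence passes to every subset, so for any nonempty $S \subseteq [m]$ the family $\{L_i^{d+1} : i \in S\}$ is also linearly independent.

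Fix a nonempty $S \subseteq [m]$ and pick any distinguished index $i_0 \in S$. The observation above, restricted to $S$, says precisely that $L_{i_0}^{d+1}$ does not lie in the linear span of $\{L_j^{d+1} : j \in S, j \neq i_0\}$. This is exactly the hypothesis of Theorem~\ref{thm:strong_independence} applied to the subsystem $\{L_i : i \in S\}$, with the role of the distinguished form played by $L_{i_0}$ and the role of the distinguished function played by $f_{i_0}$. Since by assumption $\|f_{i_0}\|_{U^{d+1}} \le \delta$ and the other $f_i$ for $i \in S$ still map into $\mathbb{D}$, the theorem gives a $\delta_S = \delta_S(\eps,\mathcal{L})>0$ for which
$$
\left| \Ex_{\X \in (\F_p^n)^k}\Bigl[\, \prod_{i \in S} f_i(L_i(\X)) \,\Bigr] \right| \le \eps.
$$

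To obtain a single $\delta$ valid simultaneously for every nonempty $S$, set $\delta := \min_{\emptyset \ne S \subseteq [m]} \delta_S$; this minimum is positive because there are only finitely many subsets. The condition $p$ sufficiently large required by Theorem~\ref{thm:strong_independence} is already imposed in the hypothesis via $s \le p$. In short, the real content is Theorem~\ref{thm:strong_independence}: once it is available, the multi-function, multi-subset statement becomes a one-line reduction, since true complexity provides linear independence of tensor powers, and linear independence is trivially hereditary under passing to subfamilies.

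The main obstacle in this deduction is essentially nonexistent — the work has already been done in establishing Theorem~\ref{thm:strong_independence}. If one instead wished to prove the statement directly without appealing to that theorem (i.e., to mirror Gowers and Wolf's original argument), the difficulty would shift to showing that the hypothesis $\|f_i\|_{U^{d+1}} \le \delta$ placed on a single function in the product suffices to kill the average, which is precisely the content of the strong independence theorem and is the technical heart of this paper.
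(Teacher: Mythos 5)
Your deduction is correct, and it is exactly the implication the paper itself asserts when it remarks that Theorem~\ref{thm:strong_independence} improves Theorem~\ref{thm:gowers-wolf-multiple-functions}; note that the paper does not actually prove this statement but quotes it from Gowers--Wolf, so your argument (main theorem plus the characterization Theorem~\ref{thm:true_compltexity_characterization} plus heredity of linear independence under passing to the subfamily $\{L_i : i \in S\}$) is a legitimate self-contained derivation within the paper's framework, with no circularity since the proof of Theorem~\ref{thm:strong_independence} uses neither of the Gowers--Wolf theorems. The only hypothesis you apply silently is that the subsystem $\{L_i : i \in S\}$ still has Cauchy--Schwarz complexity at most $p$; this is fine because restricting to a subfamily can only decrease the Cauchy--Schwarz complexity (intersect the partition classes from Definition~\ref{dfn:cauchy-schwarz-complexity} with $S$).
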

In particular, Gowers and Wolf used this to derive the following corollary.
\begin{corollary}[Theorem 7.1 in~\cite{gowers-wolf-2010}]
\label{cor:gowers-wolf-approx-bias}
Let $\mathcal{L}=\{L_1,\ldots,L_m\}$ be a system of linear forms of true complexity $d$ and Cauchy-Schwarz
complexity at most $p$. Then for every $\eps>0$, there exists $\delta>0$ such that the following holds.
Let $f_i:\F_p^n \to \D$ for $1 \le i \le m$ be functions such that $\|f_i - \Ex[f_i]\|_{U^{d+1}} \le \delta$. Then
$$
\left|\Ex_{\X}\left[ \prod_{i=1}^m f_i(L_i(\X))\right] - \prod_{i=1}^m \Ex[f_i]\right| \le \eps.
$$
\end{corollary}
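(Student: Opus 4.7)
The plan is to deduce this corollary immediately from Theorem~\ref{thm:gowers-wolf-multiple-functions} by a multilinear expansion that splits each $f_i$ into its mean and its zero-mean part. Write $\alpha_i := \Ex[f_i]$ and $g_i := f_i - \alpha_i$; then $|\alpha_i| \le 1$, the function $g_i$ takes values in $2\D$, and by hypothesis $\|g_i\|_{U^{d+1}} \le \delta$. Expanding the product $\prod_i f_i(L_i(\X)) = \prod_i \bigl(\alpha_i + g_i(L_i(\X))\bigr)$ gives
$$
\prod_{i=1}^m f_i(L_i(\X)) \;=\; \sum_{S \subseteq [m]} \Bigl(\prod_{i \notin S} \alpha_i\Bigr) \prod_{i \in S} g_i(L_i(\X)),
$$
so that taking expectations and isolating the $S = \emptyset$ term yields
$$
\Ex\Bigl[\prod_{i=1}^m f_i(L_i(\X))\Bigr] - \prod_{i=1}^m \alpha_i \;=\; \sum_{\emptyset \ne S \subseteq [m]} \Bigl(\prod_{i \notin S} \alpha_i\Bigr)\, \Ex\Bigl[\prod_{i \in S} g_i(L_i(\X))\Bigr].
$$

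For every nonempty $S$, the inner expectation is exactly the type of subsystem average controlled by Theorem~\ref{thm:gowers-wolf-multiple-functions}: after rescaling $g_i \mapsto g_i/2$ to return to $\D$-valued functions with $U^{d+1}$ norm at most $\delta/2$, that theorem provides a single threshold $\delta$ (depending only on $m$, $d$, $\mathcal{L}$, and the target accuracy) which simultaneously bounds every one of the $2^m - 1$ such subsystem averages — crucially, the conclusion is stated uniformly over all nonempty $S \subseteq [m]$, so no union bound over subsystems is required. Given $\eps > 0$, I would choose the target accuracy to be $\eps / 2^{2m}$, so that after undoing the rescaling (which costs a factor $2^{|S|} \le 2^m$) each inner expectation is bounded in modulus by $\eps / 2^m$.

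Applying the triangle inequality to the displayed identity, together with $\prod_{i \notin S} |\alpha_i| \le 1$, then gives
$$
\Bigl| \Ex\Bigl[\prod_i f_i(L_i(\X))\Bigr] - \prod_i \Ex[f_i] \Bigr| \;\le\; (2^m - 1)\cdot \frac{\eps}{2^m} \;<\; \eps,
$$
which is the desired conclusion; the Cauchy–Schwarz hypothesis $s \le p$ on $\mathcal{L}$ is inherited directly from the hypothesis of Theorem~\ref{thm:gowers-wolf-multiple-functions}. There is no real obstacle in this deduction: the argument is a routine inclusion–exclusion, and all of the substantive work is already carried out inside Theorem~\ref{thm:gowers-wolf-multiple-functions} (and, upstream from it, in the characterization of true complexity provided by Theorem~\ref{thm:true_compltexity_characterization}), both of which are available in this section.
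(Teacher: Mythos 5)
Your proposal is correct, and it is exactly the derivation the paper intends: the corollary is stated as an immediate consequence of Theorem~\ref{thm:gowers-wolf-multiple-functions}, obtained by writing $f_i=\Ex[f_i]+(f_i-\Ex[f_i])$, expanding multilinearly, and bounding every nonempty subsystem average via that theorem (with the rescaling by $2$ handled just as you do). No gaps.
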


Note that Corollary~\ref{cor:gowers-wolf-approx-bias} says  if $\|f_i - \Ex[f_i]\|_{U^{d+1}}$ is small for all $i \in [m]$, then in the average $\Ex_{\X}\left[ \prod_{i=1}^m f_i(L_i(\X))\right]$ one can replace the functions $f_i$ with their expected values $\Ex[f_i]$ causing only a small error of $\eps$. In other words if in the decomposition $f_i= \Ex[f_i] + (f_i-\Ex[f_i])$, the part $(f_i-\Ex[f_i])$ is sufficiently pseudorandom, then it is possible to discard it. As we shall see in Section~\ref{sec:decompos}, for every function $f_i:\mathbb{F}_p^n \to \mathbb{D}$, it is possible to find a ``structured'' function $g_i$, such that $f_i-g_i$ is pseudorandom in the sense that $\|f_i-g_i\|_{U^{d+1}}$ can be made arbitrarily small. However, in the general case the function $g_i$ will not necessarily be  the constant function $\Ex[f_i]$. Hence it is important to obtain a version of Corollary~\ref{cor:gowers-wolf-approx-bias} that can be applied in this general situation. We achieve this in the following theorem,  which qualitatively improves both Corollaries~\ref{cor:cauchy-schwarz-complexity-bound}~and~\ref{cor:gowers-wolf-approx-bias}.

\begin{theorem}
\label{thm:avg-approx-funcs}
Let $\mathcal{L}=\{L_1,\ldots,L_m\}$ be a system of linear forms of true complexity $d$ and Cauchy-Schwarz complexity at most $p$. Then for every $\eps>0$, there exists $\delta>0$ such that the following holds. Let $f_i,g_i:\F_p^n \to \D$
for $1 \le i \le m$ be functions such that $\|f_i - g_i\|_{U^{d+1}} \le \delta$. Then
$$
\left|\Ex_{\X} \left[\prod_{i=1}^m f_i(L_i(\X))\right] - \Ex_{\X} \left[\prod_{i=1}^m g_i(L_i(\X))\right]\right| \le \eps,
$$
where $\X \in (\F_p^n)^k$ is uniform.
\end{theorem}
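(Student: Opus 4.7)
}
The natural approach is a hybrid (telescoping) argument combined with the main technical result, Theorem~\ref{thm:strong_independence}. Write
\begin{equation*}
\Ex_{\X}\Big[\prod_{i=1}^m f_i(L_i(\X))\Big] - \Ex_{\X}\Big[\prod_{i=1}^m g_i(L_i(\X))\Big]
= \sum_{i=1}^{m} \Ex_{\X}\Big[\prod_{j<i} g_j(L_j(\X)) \cdot (f_i-g_i)(L_i(\X)) \cdot \prod_{j>i} f_j(L_j(\X))\Big].
\end{equation*}
Each term in this telescoping sum is a multilinear average in $m$ bounded functions, where the $i$-th slot carries the ``small'' function $f_i - g_i$, whose $U^{d+1}$ norm is at most $\delta$. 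The goal is to bound each such term by $\eps/m$ and add up.

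To apply Theorem~\ref{thm:strong_independence} to the $i$-th term, I need that $L_i^{d+1}$ is not in the linear span of $\{L_j^{d+1} : j \ne i\}$. This is exactly where the true complexity hypothesis enters: since $\mathcal{L}$ has true complexity $d$ and Cauchy-Schwarz complexity at most $p$, Theorem~\ref{thm:true_compltexity_characterization} tells us that $L_1^{d+1},\ldots,L_m^{d+1}$ are linearly independent, and in particular no $L_i^{d+1}$ lies in the span of the others. So for every $i \in [m]$ the hypothesis of Theorem~\ref{thm:strong_independence} is verified simultaneously.

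Given $\eps>0$, I would first apply Theorem~\ref{thm:strong_independence} with accuracy parameter $\eps/(2m)$ to obtain some $\delta_0 > 0$, and then set $\delta := \delta_0$ in the statement of Theorem~\ref{thm:avg-approx-funcs}. For a fixed index $i$, the function $\tfrac12(f_i - g_i)$ takes values in $\mathbb{D}$ (since $f_i, g_i \in \mathbb{D}$) and has $U^{d+1}$ norm at most $\delta/2 \le \delta_0$. The other factors in the hybrid term, namely $g_1,\ldots,g_{i-1},f_{i+1},\ldots,f_m$, are all $\mathbb{D}$-valued. Applying Theorem~\ref{thm:strong_independence} to this $m$-tuple of functions (with the ``distinguished'' function placed in slot $i$) bounds the $i$-th telescoping term by $2 \cdot \eps/(2m) = \eps/m$, and the triangle inequality over $i = 1,\ldots,m$ yields the claimed bound $\eps$.

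The only substantive obstacle is Theorem~\ref{thm:strong_independence} itself, which carries the whole weight of the argument; once it is in hand, the deduction above is almost formal. A minor care point is that the hybrid terms are mixed products of $f$'s and $g$'s rather than a single collection, but since Theorem~\ref{thm:strong_independence} makes no joint structural assumption on the other $m-1$ bounded functions, this poses no difficulty. The passage from $f_i - g_i \in 2\mathbb{D}$ to $\mathbb{D}$-valued functions by rescaling is also routine, and just produces the harmless factor of $2$ absorbed above.
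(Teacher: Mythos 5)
Your proposal is correct and is essentially the paper's own route: the authors state that Theorem~\ref{thm:avg-approx-funcs} ``follows immediately'' from Theorem~\ref{thm:strong_independence}, and the intended deduction is precisely your telescoping identity (the same identity they use explicitly in the proof of Theorem~\ref{thm:strong_independence}), together with Theorem~\ref{thm:true_compltexity_characterization} to guarantee that each $L_i^{d+1}$ lies outside the span of the others. Your handling of the rescaling of $f_i-g_i$ into $\D$ and the choice of $\delta$ is also fine.
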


In fact, we prove a stronger result, from which Theorem~\ref{thm:avg-approx-funcs} follows immediately. As Gowers and Wolf~\cite{gowers-wolf-2010} correctly conjectured, in Theorem~\ref{thm:gowers-wolf-multiple-functions}  the condition that the true complexity of $\mathcal{L}$ is at most $d$ can be replaced by a much weaker condition. It suffices to assume that $L_1^{d+1}$ linearly independent of $L_2^{d+1},\ldots,L_m^{d+1}$. In fact it turns out that more is true\footnote{Gowers and Wolf required that $L_1^{d+1}$ is linearly independent of $L_2^{d+1},\ldots,L_m^{d+1}$, and that all $\|f_1\|_{U^{d+1}},\ldots,\|f_m\|_{U^{d+1}}$ will be bounded by $\delta$.}, and the condition that all of $\|f_1\|_{U^{d+1}},\ldots,\|f_m\|_{U^{d+1}}$ are small also can be replaced by the weaker condition that only $\|f_1\|_{U^{d+1}}$ is small.

\begin{theorem}[Main theorem]
\label{thm:strong_independence}
Let $\mathcal{L}=\{L_1,\ldots,L_m\}$ be a system of linear forms  of Cauchy-Schwarz complexity at most $p$. Let $d \ge 0$, and assume that $L_1^{d+1}$ is not in the linear span of $L_2^{d+1},\ldots,L_m^{d+1}$. Then for every $\eps>0$, there exists $\delta>0$ such that for any functions $f_1,\ldots,f_m:\F_p^n \to \D$ with $\|f_1\|_{U^{d+1}} \le \delta$, we have
$$
\left| \Ex_{\X \in (\F_p^n)^k} \left[\prod_{i=1}^m f_i(L_i(\X)) \right] \right| \le \eps,
$$
where $\X \in (\F_p^n)^k$ is uniform.
\end{theorem}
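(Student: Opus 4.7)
The plan is to reduce the bound on $|\Ex_\X[\prod_i f_i(L_i(\X))]|$ to a bounded collection of averages of $f_1$ against low-degree polynomial phases on $L_1(\X)$, using a higher-order Fourier decomposition of $f_2,\ldots,f_m$, and then to bound each such average by $\|f_1\|_{U^{d+1}}$ using the strong orthogonality machinery of Section~\ref{sec:strongOrth}. Throughout, let $s$ denote the Cauchy-Schwarz complexity of $\mathcal{L}$.

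Fix an auxiliary parameter $\eta>0$, to be chosen as a function of $\eps$, and apply the higher-order Fourier decomposition (Theorem~\ref{thm:decompose}) at degree $d$ to each $f_i$ with $i\ge 2$, writing
$$
f_i = g_i + h_i + e_i,
$$
where $g_i$ is a bounded function of a polynomial factor of degree $\le d$, expandable as $g_i = \sum_j c_{i,j}\,\exp(P_{i,j})$ with $\deg P_{i,j}\le d$ and uniformly bounded coefficients $c_{i,j}$; $\|h_i\|_{U^{s+1}}\le \eta$; and $\|e_i\|_2\le \eta$. Expanding $\prod_{i\ge 2} f_i$ and substituting into the average, every cross term containing at least one $h_j$ is $O(\eta)$ by Lemma~\ref{lemma:cauchy-schwarz-lemma}, and every cross term containing at least one $e_j$ is $O(\eta)$ by a single application of $L^2$ Cauchy--Schwarz. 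It therefore suffices to bound the main term
$$
M := \Ex_\X\!\left[ f_1(L_1(\X))\prod_{i=2}^m g_i(L_i(\X)) \right] = \sum_{\vec{\jmath}} c_{\vec{\jmath}}\, \Ex_\X\!\left[ f_1(L_1(\X))\, \exp\!\Bigl(\textstyle\sum_{i=2}^m P_{i,j_i}(L_i(\X)) \Bigr) \right],
$$
where the outer sum runs over only $O_\eta(1)$ tuples.

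For each fixed tuple $\vec{\jmath}$, set $P(\X):=\sum_{i=2}^m P_{i,j_i}(L_i(\X))$, a polynomial on $(\F_p^n)^k$ of degree $\le d$. This is the step where the hypothesis that $L_1^{d+1}$ is not in the linear span of $L_2^{d+1},\ldots,L_m^{d+1}$ enters crucially. The strong orthogonality results (Lemmas~\ref{lem:LinearFormsBias}, \ref{lem:dependencyShape}, \ref{lem:nonhomogen}) are used to analyze the conditional average
$$
\Phi(y) := \Ex_{\X\,:\,L_1(\X)=y}\!\left[\exp(P(\X))\right]
$$
and show that, up to an error negligible in our parameters, $\Phi$ decomposes as a bounded sum of degree-$\le d$ polynomial phases $\exp(R(y))$ in the single variable $y\in\F_p^n$. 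The algebraic mechanism is that any nontrivial high-rank component of $P$ whose degree-$d{+}1$ shape ``sees'' the $L_1$ direction would force a linear relation among the tensor powers $L_1^{d+1},\ldots,L_m^{d+1}$, contradicting the hypothesis. Each resulting summand $|\Ex_y[f_1(y)\exp(R(y))]|$ is then bounded by $\|f_1\|_{U^{d+1}}\le \delta$ via the iterated Cauchy--Schwarz (``generalized von Neumann'') argument, valid because $\deg R\le d$. Summing over the $O_\eta(1)$ tuples and choosing $\delta$ small enough in terms of $\eta$, we get $|M|\le \eps/2$, which together with the pseudorandom estimates yields the theorem.

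The main obstacle is the middle step: extracting the low-degree phase structure of $\Phi$ from the joint polynomial $P(\X)$ using only the tensor-power independence hypothesis. This needs a careful rank/shape analysis of polynomials of the form $\sum_i Q_i(L_i(\X))$ that extends the Green--Tao near-orthogonality theorems to averages mixing several different linear forms, and it is precisely the technical content of Section~\ref{sec:strongOrth}. The remainder of the argument is parameter bookkeeping in the order $\eps \Rightarrow \eta \Rightarrow \delta$, each step introducing only factors depending on previously fixed parameters.
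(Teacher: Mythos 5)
There is a genuine gap at the very first step: the three-term decomposition you invoke does not exist. You ask for $f_i=g_i+h_i+e_i$ with $g_i$ measurable with respect to a polynomial factor of degree at most $d$, $\|h_i\|_{U^{s+1}}\le\eta$ and $\|e_i\|_2\le\eta$. Theorem~\ref{thm:decompose} applied at degree $d$ only controls the error in $U^{d+1}$, and since the $U^k$ norms increase with $k$ this says nothing about $U^{s+1}$ when $d<s$. No refinement can fix this: take $f_2=\exp(P)$ with $P$ homogeneous of degree $d+1\le s$ and of very high rank. Then $\|f_2\|_{U^{s+1}}=1$, while $f_2$ has negligible correlation with any bounded function of a high-rank degree-$\le d$ factor; by (\ref{eq:GowersCorrelationII}) applied to the phase polynomial $\exp(P)$ of degree $d+1\le s$, any $g$ of the required form would satisfy $\|f_2-g\|_{U^{s+1}}\ge|\ip{f_2-g,\exp(P)}|\approx 1$, whereas $h_2+e_2$ would force this quantity to be small. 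So the components of $f_2,\ldots,f_m$ that correlate with phase polynomials of degree strictly between $d$ and $s$ are unaccounted for in your argument --- and these are exactly the components for which the hypothesis on $L_1^{d+1}$ is needed. Indeed, in your main term the polynomial $P(\X)=\sum_{i\ge2}P_{i,j_i}(L_i(\X))$ has degree at most $d$ by construction, so there is no ``degree-$(d{+}1)$ shape'' for the tensor-power hypothesis to act on; since the theorem is false without that hypothesis, this is a clear sign the reduction cannot be correct.

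The paper's proof avoids this by decomposing all the $f_i$ (including $f_1$) relative to a \emph{single} high-rank homogeneous factor of degree $s$, the Cauchy--Schwarz complexity (Lemma~\ref{lemma:decompose_multiple_func}); the error terms are then genuinely small in $U^{s+1}$ and can be discarded by Lemma~\ref{lemma:cauchy-schwarz-lemma}. Expanding each $h_i=\E(f_i|\mathcal{B})$ into phases $\exp(P_{\gamma})$ of degree up to $s$, the terms are split according to $\deg(P_{\gamma_1})$, the degree of the phase attached to the $L_1$ slot. When $\deg(P_{\gamma_1})\le d$, near-orthogonality of the high-rank factor (Theorem~\ref{thm:regularity}) together with Observation~\ref{obs:project_factor} shows that the Fourier coefficient $c_{1,\gamma_1}$ is essentially $\Ex[f_1\exp(-P_{\gamma_1})]$, which is at most $\|f_1\|_{U^{d+1}}\le\delta$. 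When $\deg(P_{\gamma_1})=t>d$, Lemma~\ref{lem:LinearFormsBias} shows the exponential average is tiny unless $\sum_i P_{\gamma_i}(L_i(\x))\equiv 0$, and passing to degree-$t$ homogeneous parts this identity forces $L_1^{t}$ (hence $L_1^{d+1}$) into the span of the other tensor powers, contradicting the hypothesis. This case analysis on the degree of the phase multiplying $f_1$ is the idea missing from your sketch; your final step of bounding $|\Ex_y[f_1(y)\exp(R(y))]|$ by $\|f_1\|_{U^{d+1}}$ for $\deg R\le d$ is correct but is only half of the story.
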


Theorem~\ref{thm:strong_independence} improves both Lemma~\ref{lemma:cauchy-schwarz-lemma} and Theorem~\ref{thm:gowers-wolf-multiple-functions}:
Let $s \le p$ denote the Cauchy-Schwarz complexity of $\mathcal{L}$ in Theorem~\ref{thm:strong_independence}. Lemma~\ref{lemma:cauchy-schwarz-lemma} requires the stronger condition $\|f_1\|_{U^{s+1}} \le \delta$, and Theorem~\ref{thm:gowers-wolf-multiple-functions} requires two stronger conditions: that $L_1^{d+1},\ldots,L_m^{d+1}$ are linearly independent; and that all $\|f_1\|_{U^{d+1}},\ldots,\|f_m\|_{U^{d+1}}$ are bounded by $\delta$.

\section{Higher-order Fourier analysis}
\label{sec:highFourier}
Although Fourier analysis is a powerful tool in arithmetic combinatorics, there are key questions that cannot be addressed by this method in its classical form. For example in 1953 Roth~\cite{MR0051853} used Fourier analysis to show
that every dense subset of integers contains $3$-term arithmetic progressions. For more than four decades generalizing Roth's Fourier-analytic proof remained an important unsolved problem until finally Gowers in~\cite{MR1844079} introduced an extension of the classical Fourier analysis, which enabled him to obtain such a generalization. The work of Gowers initiated a theory, which has now come to be known as higher-order Fourier analysis. Ever since
several mathematicians  contributed to  major developments in this rapidly growing theory.

This section has two purposes. One is to review the main results that form the foundations of the higher-order Fourier analysis. A second is to establish some new facts that enable us to deal with the averages $t_\mathcal{L}$ conveniently by appealing to higher-order Fourier analysis. The work of Gowers and Wolf~\cite{MR2578471} plays a central role for us, and many ideas in the proofs and the new facts established in this section are hinted by their work.

The characters of $\F_p^n$ are exponentials of linear polynomials; that is for $\alpha \in \F_p^n$, the corresponding character is defined
as $\chi_\alpha(x) = \exp(\sum_{i=1}^n \alpha_i x_i)$. In higher-order Fourier analysis, the linear polynomials $\sum \alpha_i x_i$ are replaced by higher degree polynomials, and one would like to express a function $f:\F_p^n \to \C$ as a linear combination of the functions $\exp(P)$, where $P$ is a polynomial of a certain degree.

Consider a function $f:\F_p^n \rightarrow \C$, and a system of linear forms $\mathcal{L}=\{L_1,\ldots,L_m\}$. The basic properties of characters enable us to express $t_\mathcal{L}(f)$ as a simple formula in terms of the Fourier coefficients of $f$. Indeed, if $f:= \sum_{\alpha \in \F_p^n} \widehat{f}(\alpha) \chi_\alpha$ is the Fourier expansion of $f$, then it is easy to see that
\begin{equation}
\label{eq:avgLinearFourier}
t_\mathcal{L}(f) = \sum \widehat{f}(\alpha_1) \ldots \widehat{f}(\alpha_m),
\end{equation}
where the sum is over all $\alpha_1,\ldots,\alpha_m \in \F_p^n$ satisfying $\sum_{i=1}^m \alpha_i \otimes L_i \equiv 0$. The tools that we develop in this section enables us to obtain simple formulas similar to (\ref{eq:avgLinearFourier}) when Fourier expansion is replaced by a proper higher-order Fourier expansion.

\subsection{Inverse theorems for Gowers uniformity norms \label{sec:inverse}}
We start with some basic definitions.

\paragraph{Polynomials:} Consider a function $\ff: \F_p^n \rightarrow \F_p$. For an element $y \in \F_p^n$, define the \emph{derivative} of $\ff$ in the direction $y$ as $\Delta_y \ff(x)= \ff(x+y)-\ff(x)$. Inductively we define $\Delta_{y_1,\ldots,y_k}\ff=\Delta_{y_k}(\Delta_{y_1,\ldots,y_{k-1}} \ff)$, for directions $y_1,\ldots,y_k \in \F_p^n$. We say that $\ff$ is a \emph{polynomial of degree at most $d$} if for every $y_1,\ldots,y_{d+1} \in \F_p$, we have $\Delta_{y_1,\ldots,y_{d+1}} \ff \equiv 0$. The set of polynomials of degree at most $d$ is a vector space over $\F_p$, which we denote by ${\rm Poly}_d(\F_p^n)$. It is easy to see that the set of \emph{monomials} $x_1^{i_1} \ldots x_n^{i_n}$ where $0 \le i_1,\ldots,i_n < p$ and $\sum_{j=1}^n i_j \le d$ form a basis for ${\rm Poly}_d(\F_p^n)$. So every polynomial $P \in {\rm Poly}_d(\F_p^n)$ is of the from $P(x):=\sum c_{i_1,\ldots,i_n} x_1^{i_1} \ldots x_n^{i_n}$, where the sum is over all $1 \le i_1,\ldots,i_n < p$ with $\sum_{j=1}^n i_j \le d$, and $c_{i_1,\ldots,i_n}$ are elements of $\F_p$. The \emph{degree} of a polynomial $P:\F_p^n \rightarrow \F_p$, denoted by $\deg(P)$, is the smallest $d$ such that $P \in {\rm Poly}_d(\F_p^n)$. A polynomial  $P$ is called \emph{homogeneous} if all monomials with non-zero coefficients in the expansion of $P$ are of degree exactly $\deg(P)$.

\paragraph{Phase Polynomials:} For a function $f:\F_p^n \rightarrow \C$, and a direction $y \in \F_p^n$ define the \emph{multiplicative derivative} of $f$ in the direction of $y$ as $\widetilde\Delta_{y}f(x)=f(x+y)\overline{f(x)}$. Inductively we define $\widetilde{\Delta}_{y_1,\ldots,y_k}f=\widetilde{\Delta}_{y_k}(\widetilde{\Delta}_{y_1,\ldots,y_{k-1}} f)$, for directions $y_1,\ldots,y_k \in \F_p^n$.  A function $f:\F_p^n \rightarrow \C$ is called a \emph{phase polynomial of degree at most $d$}  if for every $y_1,\ldots,y_{d+1} \in \F_p$, we have $\widetilde{\Delta}_{y_1,\ldots,y_{d+1}} f \equiv 1$. We denote the space of all phase polynomials of degree at most $d$ over $\F_p^n$ by $\mathcal{P}_d(\F_p^n)$. Note that for every $\ff: \F_p^n \rightarrow \F_p$, and every $y \in \F_p^n$, we have that
$$\widetilde{\Delta}_y \exp(\ff) = \exp(\Delta_y \ff).$$
This shows that if $\ff \in {\rm Poly}_d(\F_p^n)$, then $\exp(\ff)$ is a phase polynomial of degree at most $d$. The following simple lemma shows that the inverse is essentially true in high characteristics:
\begin{lemma}[Lemma 1.2 in~\cite{tao-2008}]
\label{lem:phasePoly}
Suppose that $0 \le d < p$. Every $f \in \mathcal{P}_d(\F_p^n)$ is of the form $f(x)=\exp(\theta + \ff(x))$, for some $\theta \in \mathbb{R}/\mathbb{Z}$, and $\ff \in {\rm Poly}_d(\F_p^n)$.
\end{lemma}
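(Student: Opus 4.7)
I would prove the lemma by induction on the degree $d$. The base case $d=0$ is immediate: setting every direction $y_i = 0$ in the definition of $\mathcal{P}_d$ collapses $\widetilde{\Delta}_{0,\dots,0} f(x)$ to $|f(x)|^{2^{d+1}}$, which must equal $1$, so $|f|\equiv 1$. Then $\widetilde{\Delta}_y f \equiv 1$ forces $f$ to be constant of modulus one, which has the form $e^{2\pi i \theta} = \exp(\theta + 0)$.

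For the inductive step with $d \ge 1$, the same trivial-direction trick again gives $|f|\equiv 1$. For each standard basis vector $e_i$, the derivative $\widetilde{\Delta}_{e_i} f$ lies in $\mathcal{P}_{d-1}(\F_p^n)$, so the induction hypothesis furnishes $\theta_i \in \mathbb{R}/\mathbb{Z}$ and $P_i \in {\rm Poly}_{d-1}(\F_p^n)$ with $\widetilde{\Delta}_{e_i} f(x) = e^{2\pi i \theta_i}\exp(P_i(x))$. Commuting the derivatives and computing $\widetilde{\Delta}_{e_j}(e^{2\pi i \theta_i}\exp(P_i)) = \exp(\Delta_{e_j}P_i)$ yields the compatibility condition $\Delta_{e_j} P_i \equiv \Delta_{e_i} P_j$ in $\F_p$ for every $i,j$.

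The heart of the argument, and the step where the hypothesis $d<p$ is actually used, is a discrete Poincar\'e lemma: given polynomials $P_1,\ldots,P_n \in {\rm Poly}_{d-1}(\F_p^n)$ with $\Delta_{e_j} P_i = \Delta_{e_i} P_j$ for all $i,j$, there exists $Q \in {\rm Poly}_d(\F_p^n)$ satisfying $\Delta_{e_i} Q = P_i$ for every $i$. I would construct $Q$ by induction on the number of active variables: first produce $R_1 \in {\rm Poly}_d(\F_p^n)$ with $\Delta_{e_1} R_1 = P_1$ using the finite-difference antiderivative rule $\Delta_{e_1}\bigl(x_1^{\underline{k+1}}/(k+1)\bigr) = x_1^{\underline{k}}$, applied monomial-by-monomial. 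This is the main obstacle of the proof: the construction requires the scalars $1,2,\ldots,d$ to be invertible in $\F_p$, which is precisely the assumption $d<p$. The modified polynomials $P_i - \Delta_{e_i} R_1$ (for $i \ge 2$) then have vanishing $e_1$-derivative, so they do not depend on $x_1$ (using that all $x_1$-degrees are less than $p$), and still satisfy the compatibility condition; the recursion in fewer variables then delivers the remaining piece of $Q$.

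Finally, I would set $h(x) := f(x)\overline{\exp(Q(x))}$. A direct calculation gives $\widetilde{\Delta}_{e_i} h(x) = e^{2\pi i \theta_i}$, a constant in each coordinate direction. Combined with the $p$-periodicity of $h$, each $e^{2\pi i \theta_i}$ must be a $p$-th root of unity, so $\theta_i = \alpha_i/p$ for some $\alpha_i \in \F_p$. Iterating along coordinates yields $h(x) = h(0)\exp(\sum_i \alpha_i x_i)$, and therefore $f(x) = h(0)\exp\bigl(Q(x) + \sum_i \alpha_i x_i\bigr)$. Taking $\ff(x) := Q(x) + \sum_i \alpha_i x_i \in {\rm Poly}_d(\F_p^n)$ and $\theta \in \mathbb{R}/\mathbb{Z}$ with $h(0) = e^{2\pi i \theta}$ yields the desired decomposition $f(x) = \exp(\theta + \ff(x))$, completing the induction.
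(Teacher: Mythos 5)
The paper does not prove this lemma at all --- it imports it verbatim as Lemma 1.2 of the cited Tao reference --- so there is no in-paper argument to compare against. Your proof is correct and is essentially the standard argument behind that citation: induct on $d$, apply the inductive hypothesis to the coordinate derivatives $\widetilde{\Delta}_{e_i}f$, integrate the resulting compatible family $\{P_i\}$ via the discrete Poincar\'e lemma (where the falling-factorial antiderivative is exactly where $d<p$ is needed, both for inverting $k+1$ and for the exponents staying below $p$), and finish by observing that $h=f\,\overline{\exp(Q)}$ has constant coordinate derivatives which must be $p$-th roots of unity. All the steps check out, including the degree bookkeeping ($R_1\in{\rm Poly}_d$ and $P_i-\Delta_{e_i}R_1\in{\rm Poly}_{d-1}$), so this is a valid self-contained proof of the imported lemma.
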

When $d \ge p$, more complicated phase polynomials arise. Nevertheless obtaining a complete characterization is possible~\cite{tao-blog}.

Now let us describe the relation between the phase polynomials and the Gowers norms. First note that one can express Gowers uniformity norms using multiplicative derivatives:
$$\|f\|_{U^k}^{2^k} = \Ex\left[ \widetilde{\Delta}_{Y_1,\ldots,Y_k}f(X)\right],$$
where $X,Y_1,\ldots,Y_k$ are independent random variables taking values in $\F_p^n$ uniformly. This for example shows that every phase polynomial $g$ of degree at most $d$ satisfies $\|g\|_{U^{d+1}}=1$.

Many basic properties of  Gowers uniformity norms are implied by the Gowers-Cauchy-Schwarz inequality, which is first proved in~\cite{MR1844079} by iterated applications of the classical Cauchy-Schwarz inequality.
\begin{lemma}[Gowers-Cauchy-Schwarz]
\label{lem:GowersCauchyShwarz}
Let $G$ be a finite Abelian group, and consider a family of functions $f_S:G \rightarrow \C$, where $S \subseteq [k]$. Then
\begin{equation}
\left| \Ex \left[\prod_{S \subseteq [k]} \mathcal{C}^{k-|S|} f_S(X + \sum_{i \in S} Y_i) \right]\right|\le \prod_{S \subseteq [k]} \|f_S\|_{U^k},
\end{equation}
where $X,Y_1,\ldots,Y_k$ are independent random variables taking values in $G$ uniformly at random.
\end{lemma}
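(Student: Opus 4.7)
The plan is to prove the stronger statement
$$
\left|\langle (f_S)_S \rangle_{U^k}\right|^{2^k} \le \prod_{S \subseteq [k]} \|f_S\|_{U^k}^{2^k},
$$
where $\langle (f_S)_S \rangle_{U^k} := \E[\prod_S \mathcal{C}^{k-|S|} f_S(X + \sum_{i \in S} Y_i)]$ is the multilinear Gowers inner product. Taking $2^k$-th roots and noting that there are $2^k$ subsets recovers the lemma.

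The heart of the argument is a single Cauchy--Schwarz step, which for any fixed direction $j \in [k]$ yields
$$
\left|\langle (f_S)_S \rangle_{U^k}\right|^2 \le \langle (f_{S \setminus \{j\}})_S \rangle_{U^k} \cdot \langle (f_{S \cup \{j\}})_S \rangle_{U^k}.
$$
To establish this, I would substitute $X' = X + Y_j$ in the factors containing $Y_j$, so that $X$ and $X'$ become independent uniform variables. The expectation then separates as $\E_{(Y_i)_{i\ne j}}[\phi(Y) \overline{\psi(Y)}]$, where $\phi$ is an expectation over $X$ of the factors with $j \notin S$ and $\psi$ is an expectation over $X'$ of the factors with $j \in S$ (after absorbing one conjugation). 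Cauchy--Schwarz in the remaining variables then bounds $|\langle (f_S)_S\rangle|^2$ by $\E_Y|\phi|^2 \cdot \E_Y|\psi|^2$. Introducing a fresh auxiliary direction in each squared norm (via $X'' = X + Y_j'$) restores a Gowers inner product structure: the first factor becomes the inner product with $f_{S \setminus \{j\}}$ at every position, and the second with $f_{S \cup \{j\}}$ at every position, once the conjugations are matched modulo $2$. Manifestly both factors are real and nonnegative.

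Then iterate: apply this step to each resulting factor using the next available direction, producing a binary tree of depth $k$ with $2^k$ leaves. At each leaf the $2^k$ functions of the Gowers inner product are all equal to a single $f_A$ for some $A \subseteq [k]$, where $A$ records the directions $j$ for which the ``$\cup$'' branch (rather than ``$\setminus$'') was chosen. A direct counting check shows that each subset $A \subseteq [k]$ appears at exactly one leaf. But a Gowers inner product with all functions equal to $f_A$ is precisely $\|f_A\|_{U^k}^{2^k}$ by definition, yielding
$$
\left|\langle (f_S)_S \rangle_{U^k}\right|^{2^k} \le \prod_{A \subseteq [k]} \|f_A\|_{U^k}^{2^k}.
$$

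The main obstacle is the bookkeeping of conjugation signs through the $k$ iterations, but this is systematic: each factor $\mathcal{C}^{k-|S|}$ is paired with $\mathcal{C}^{k-|S|+1}$ in the doubling produced by Cauchy--Schwarz, and these parities are exactly what is needed to ensure every intermediate inner product is real and nonnegative, so that repeated squaring is legitimate. No input beyond Cauchy--Schwarz and the substitution $X' = X+Y_j$ is required.
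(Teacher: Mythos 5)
Your proof is correct and is exactly the standard argument: a single Cauchy--Schwarz step in one direction $j$ (after the substitution $X'=X+Y_j$) splits the Gowers inner product into the two inner products with $f_{S\setminus\{j\}}$ and $f_{S\cup\{j\}}$ at every position, both manifestly nonnegative, and iterating over all $k$ directions yields $|\langle (f_S)\rangle|^{2^k}\le\prod_{A\subseteq[k]}\|f_A\|_{U^k}^{2^k}$. The paper itself gives no proof of this lemma, only a citation to Gowers' original ``iterated applications of the classical Cauchy--Schwarz inequality,'' which is precisely the route you take.
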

A simple application of Lemma~\ref{lem:GowersCauchyShwarz} is the following. Consider an arbitrary function $f:G \rightarrow \C$.
Setting $f_\emptyset :=f$ and $f_S:=1$ for every $S \neq \emptyset$ in Lemma~\ref{lem:GowersCauchyShwarz}, we obtain
\begin{equation}
\label{eq:avgVsGowers}
|\Ex[f] | \le \|f\|_{U^k}.
\end{equation}
Equation~(\ref{eq:avgVsGowers}) in particular shows that if $f,g:\F_p^n \to \C$,
then one can bound their inner product by Gowers uniformity norms of $f\overline{g}$:
\begin{equation}
\label{eq:GowersCorrelation}
|\ip{f,g}| \le \|f\overline{g}\|_{U^k}.
\end{equation}
Consider an arbitrary $f:\F_p^n \to \C$ and a phase polynomial $g$ of degree at most $d$. Then for every $y_1,\ldots,y_{d+1} \in \F_p^n$, we have
$$\widetilde{\Delta}_{y_1,\ldots,y_{d+1}} (f g) = (\widetilde{\Delta}_{y_1,\ldots,y_{d+1}} f) (\widetilde{\Delta}_{y_1,\ldots,y_{d+1}}g) =\widetilde{\Delta}_{y_1,\ldots,y_{d+1}} f,$$
which in turn implies that $\|fg\|_{U^{d+1}}=\|f\|_{U^{d+1}}$.  We conclude that
\begin{equation}
\label{eq:GowersCorrelationII}
\sup_{g \in \mathcal{P}_d} \left| \ip{f,g} \right| \le \|f\|_{U^{d+1}}.
\end{equation}
This provides us with a ``direct theorem'' for the $U^{d+1}$ norm:
If  $\sup_{g \in \mathcal{P}_d} \left| \ip{f,g} \right| \ge \eps$, then $\|f\|_{U^{d+1}} \ge \eps$. The following theorem provides the corresponding inverse theorem.
\begin{theorem}[\cite{bergelson-2009,tao-2008,TaoZiegler2010}]
\label{thm:inverse}
Let $d$ be a positive integer. There exists a function $\delta:(0,1] \rightarrow (0,1]$ such that for every $f:\F_p^n \rightarrow \mathbb{D}$, and $\eps>0$,
\begin{itemize}
\item \emph{Direct theorem:} If $\sup_{g \in \mathcal{P}_d} \left| \ip{ f,g } \right| \ge \eps$, then $\|f\|_{U^{d+1}} \ge \eps$.
\item \emph{Inverse theorem:} If $\|f\|_{U^{d+1}} \ge \eps$, then $\sup_{g \in \mathcal{P}_d} \left| \ip{ f,g } \right| \ge \delta(\eps).$
\end{itemize}
\end{theorem}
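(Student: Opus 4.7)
The direct theorem is immediate: it is exactly equation \eqref{eq:GowersCorrelationII}, which was derived in the paragraph preceding the statement from the Gowers--Cauchy--Schwarz inequality together with the observation that multiplying by a phase polynomial of degree $\le d$ preserves the $U^{d+1}$ norm. All the substance therefore lies in the inverse direction, which I would prove by induction on $d$.

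\textbf{Base case} $d=1$: a direct expansion of the Gowers norm gives $\|f\|_{U^2}^4 = \sum_{\alpha \in \F_p^n} |\widehat{f}(\alpha)|^4$, so $\|f\|_{U^2}\ge \eps$ forces some $|\widehat f(\alpha)| \ge \eps^2$, i.e.\ $f$ has inner product at least $\eps^2$ with the character $\chi_\alpha$, which is a degree-$1$ phase polynomial. \textbf{Inductive step}: assuming the result up to degree $d-1$ with bound $\delta_{d-1}$, use the recursive identity
\[
\|f\|_{U^{d+1}}^{2^{d+1}} = \Ex_{h\in \F_p^n}\bigl[\|\widetilde\Delta_h f\|_{U^d}^{2^d}\bigr]
\]
to deduce from $\|f\|_{U^{d+1}}\ge \eps$ that a density $\eps'(\eps)$ of directions $h$ satisfy $\|\widetilde\Delta_h f\|_{U^d}\ge \eps''(\eps)$. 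By the inductive hypothesis, and by Lemma~\ref{lem:phasePoly} in the high-characteristic regime $p>d$, each such $h$ admits a classical polynomial $\psi_h\in \mathrm{Poly}_{d-1}(\F_p^n)$ and a phase $\theta_h\in \mathbb{R}/\mathbb{Z}$ with
\[
\bigl|\ip{\widetilde\Delta_h f,\;\exp(\theta_h+\psi_h)}\bigr| \ge \delta_{d-1}(\eps''(\eps)).
\]

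The heart of the argument is to \emph{integrate} the family $\{\psi_h\}$ into a single polynomial $\psi$ of degree $\le d$ with $\Delta_h\psi \approx \psi_h$ on a dense set of $h$, from which one can conclude that $f$ has nontrivial correlation with $\exp(\psi)$. The plan is a symmetrization step: apply Cauchy--Schwarz to the defining correlation to show that for a positive-density set of quadruples $(h_1,h_2,h_3,h_4)$ with $h_1+h_2=h_3+h_4$, the combination $\psi_{h_1}+\psi_{h_2}-\psi_{h_3}-\psi_{h_4}$ is correlated with the zero polynomial in an averaged sense. In other words, $h\mapsto \psi_h$ behaves like an approximate additive homomorphism from $\F_p^n$ into $\mathrm{Poly}_{d-1}(\F_p^n)$. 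A Balog--Szemer\'edi--Gowers regularization, followed by a Freiman-type step to extract a genuine linear structure, produces an honest linear map $h\mapsto \psi_h'$ on all of $\F_p^n$ that agrees with $\psi_h$ on a large set; one then recognizes this linear map as the directional-derivative operator of a single polynomial $\psi$ of degree at most $d$.

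The main obstacle is precisely this integration step: passing from the locally defined, approximately compatible derivative phases $\{\psi_h\}$ to a globally defined polynomial $\psi$. This cocycle-integration problem (which becomes substantially harder in low characteristic, where one needs genuinely ``non-classical'' phase polynomials) is the deep ingredient established in the cited works of Bergelson--Tao--Ziegler and Tao; everything else in the inductive scheme above is soft and follows from Cauchy--Schwarz and the pigeonhole principle.
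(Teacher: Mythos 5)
Your treatment of the direct theorem is exactly the paper's: it is the computation ending in (\ref{eq:GowersCorrelationII}), i.e.\ multiplication by a degree-$d$ phase polynomial preserves the $U^{d+1}$ norm, combined with $|\Ex[f]|\le\|f\|_{U^{d+1}}$ from Gowers--Cauchy--Schwarz. For the inverse direction you should be aware that the paper offers no proof at all: Theorem~\ref{thm:inverse} is imported as a black box from \cite{bergelson-2009,tao-2008} (for $d<p$) and \cite{TaoZiegler2010} (for $d\ge p$). Since you, too, ultimately defer the key step to those references, your write-up is consistent with how the paper uses the result; but two caveats on your sketch of the inverse step. First, the symmetrization / Balog--Szemer\'edi--Gowers / Freiman route you describe is the Gowers and Green--Tao argument for the $U^3$ case ($d=2$); it is precisely the strategy that is known not to extend naively to $d\ge 3$, which is why the general inverse theorem stayed open for so long. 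The cited proofs instead go through ergodic theory via a Furstenberg-type correspondence, and in low characteristic the correlating object is a genuinely non-classical phase polynomial --- this is why the theorem is phrased in terms of $\mathcal{P}_d$ rather than $\exp(\mathrm{Poly}_d)$, with Lemma~\ref{lem:phasePoly} invoked separately to convert back when $d<p$. Second, the closing assertion that everything outside the integration step is ``soft'' undersells the difficulty: already the passage from correlation of $\widetilde\Delta_h f$ on a positive-density set of directions $h$ to a single global correlation for $f$ is nontrivial for $d\ge 3$. So your proposal is acceptable as a gloss on a cited theorem, but the inductive outline should not be mistaken for a proof of the inverse statement.
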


In the case of $1\le d<p$, Theorem~\ref{thm:inverse} is established by Bergelson, Tao, and Ziegler~\cite{bergelson-2009,tao-2008}. The case $d \ge p$ is established only very recently by Tao and Ziegler~\cite{TaoZiegler2010}.  In the range $1\le d<p$, Lemma~\ref{lem:phasePoly} shows that the phase polynomials of degree at most $d$ can be described using polynomials of degree at most $d$. So Theorem~\ref{thm:inverse} shows that in this case if $\|f\|_{U^{d+1}} \ge \eps$, then there exists a polynomial $\gg:\F_p^n \to \F_p$ of degree at most $d$ such that $|\ip{f, \exp(\gg)}| \ge \delta(\eps)>0$.

\subsection{Decomposition theorems\label{sec:decompos}}

An important application of the inverse theorems is that they imply  ``decomposition theorems''. Roughly speaking these results say that under appropriate conditions, a function $f$ can be decomposed as $f_1+f_2$, where $f_1$ is ``structured'' in some sense that enables one to handle it easily, while $f_2$ is ``quasi-random'' meaning that it shares certain properties with a random function, and can be discarded as random noise. They are discussed in this abstract form in~\cite{GowersSurvey}. In the following we will discuss decomposition theorems that follow from Theorem~\ref{thm:inverse}, but first we need to define polynomial factors on $\F_p^n$.
\begin{definition}[Polynomial factors]
\label{def:polyFactor}
Let $p$ be a fixed prime. Let $P_1,\ldots,P_C \in {\rm Poly}_d(\F_p^n)$. The sigma-algebra on $\F_p^n$ whose atoms are $\{x \in \F_p^n : P_1(x)=a(1),\ldots,P_C(x)=a(C)\}$ for all $a \in \F_p^C$ is called a \emph{polynomial factor of degree at most $d$ and complexity at most $C$}.
\end{definition}
Let $\mathcal{B}$ be a polynomial factor defined by $P_1,\ldots,P_C$. For $f:\F_p^n \to \C$, the conditional expectation of $f$ with respect to $\mathcal{B}$, denoted $\E(f|\mathcal{B}):\F_p^n \to \C$, is
$$
\E(f|\mathcal{B})(x) = \Ex_{\{y \in \F_p^n: P_1(y)=P_1(x),\ldots,P_C(y)=P_C(x)\}}[f(y)].
$$
That is, $\E(f|\mathcal{B})$ is constant on every atom of $\mathcal{B}$, and this constant is the average value that $f$ attains on this atom. A function $g:\F_p^n \to \C$ is $\mathcal{B}$-measurable if it is constant on every atom of $\mathcal{B}$. Equivalently, we can write $g$ as $g(x)=\Gamma(P_1(x),\ldots,P_C(x))$ for some function $\Gamma:\F_p^C \to \C$. The following claim is quite useful, although its proof is immediate and holds for every sigma-algebra.
\begin{observation}
\label{obs:project_factor}
Let $f:\F_p^n \to \C$. Let $\mathcal{B}$ be a polynomial factor defined by polynomials $P_1,\ldots,P_C$.
Let $g:\F_p^n \to \C$ be any $\mathcal{B}$-measurable function. Then
$$
\ip{f,g} = \ip{\E(f|\mathcal{B}),g}.
$$
\end{observation}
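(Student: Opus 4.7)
The plan is to unwind both sides as expectations and exploit the tower property of conditional expectation, which in this finite combinatorial setting amounts to a direct partition argument. Since $\mathcal{B}$ is defined by the joint level sets of $P_1,\ldots,P_C$, its atoms are
$$
A_a = \{x \in \F_p^n : P_1(x) = a(1), \ldots, P_C(x) = a(C)\}, \qquad a \in \F_p^C,
$$
and they partition $\F_p^n$ (omitting those $a$ with $A_a = \emptyset$).

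First I would write $\ip{f,g} = \tfrac{1}{p^n}\sum_{a} \sum_{x \in A_a} f(x)\overline{g(x)}$, splitting the uniform average over $\F_p^n$ according to which atom $x$ lies in. Next, since $g$ is $\mathcal{B}$-measurable, it is constant on each atom $A_a$; call this common value $g_a$, so $\overline{g(x)} = \overline{g_a}$ for all $x \in A_a$. Pull $\overline{g_a}$ outside the inner sum to obtain
$$
\ip{f,g} = \frac{1}{p^n}\sum_{a} \overline{g_a} \sum_{x \in A_a} f(x) = \frac{1}{p^n}\sum_{a} |A_a| \cdot \left(\frac{1}{|A_a|}\sum_{x \in A_a} f(x)\right) \overline{g_a}.
$$
The parenthesized quantity is exactly the constant value of $\E(f|\mathcal{B})$ on the atom $A_a$. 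Repackaging the sum $\sum_a |A_a|(\cdots)$ as $\sum_{x \in \F_p^n}$ (using that $\E(f|\mathcal{B})(x)$ and $g(x)$ are both constant on atoms) yields $\tfrac{1}{p^n}\sum_{x} \E(f|\mathcal{B})(x) \overline{g(x)} = \ip{\E(f|\mathcal{B}), g}$, as desired.

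There is no real obstacle here; the only thing to verify carefully is that $\E(f|\mathcal{B})$, as defined in the paper, indeed equals the average of $f$ over the atom containing $x$, which is immediate from the definition. The argument uses nothing about the polynomial nature of $\mathcal{B}$ beyond the fact that its atoms partition $\F_p^n$, which is why the remark in the statement notes the fact holds for every sigma-algebra.
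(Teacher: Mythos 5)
Your proof is correct and is exactly the standard partition-over-atoms argument the paper has in mind; the paper simply omits it, noting that the observation is immediate and holds for every sigma-algebra, which is precisely the point you make at the end.
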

The following theorem that follows in a standard manner from Theorem~\ref{thm:inverse} gives a simple decomposition theorem.
\begin{theorem}[Decomposition Theorem~\cite{MR2359469}]
\label{thm:decompose}
Let $p$ be a fixed prime, $0\le d<p$ be an integer,  and $\eps>0$. Given any function $f:\F_p^n \rightarrow \mathbb{D}$, there exists a polynomial factor $\mathcal{B}$ of degree at most $d$ and complexity at most $C_{\max}(p,d,\eps)$ together with a decomposition
$$f=f_1+f_2,$$
where
$$\mbox{$f_1 := \E(f|\mathcal{B})$  and $\|f_2\|_{U^{d+1}} \le \eps$.}$$
\end{theorem}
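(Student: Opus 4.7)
The plan is to prove this by an energy increment argument, iteratively refining a polynomial factor until the residual is small in $U^{d+1}$. I would start with the trivial polynomial factor $\mathcal{B}_0$ (defined by no polynomials, complexity $0$) and produce a sequence of refinements $\mathcal{B}_0 \subseteq \mathcal{B}_1 \subseteq \ldots$ of degree at most $d$. At each stage $i$, set $f_1^{(i)} := \E(f \mid \mathcal{B}_i)$ and $f_2^{(i)} := f - f_1^{(i)}$. If $\|f_2^{(i)}\|_{U^{d+1}} \le \eps$, halt and output the decomposition. Otherwise, invoke the inverse theorem (Theorem~\ref{thm:inverse}) to obtain a phase polynomial $g_i \in \mathcal{P}_d(\F_p^n)$ with $|\langle f_2^{(i)}, g_i\rangle| \ge \delta(\eps)$. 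Since $0 \le d < p$, Lemma~\ref{lem:phasePoly} lets us write $g_i(x) = \exp(\theta_i + Q_i(x))$ with $Q_i \in {\rm Poly}_d(\F_p^n)$; form $\mathcal{B}_{i+1}$ by appending $Q_i$ to the defining polynomials of $\mathcal{B}_i$, so complexity grows by at most one.

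Next I would run the energy increment argument. Because $g_i$ is a function of $Q_i$, it is $\mathcal{B}_{i+1}$-measurable, so by Observation~\ref{obs:project_factor}
$$
\langle f_2^{(i)}, g_i\rangle = \langle \E(f_2^{(i)} \mid \mathcal{B}_{i+1}), g_i\rangle = \langle f_1^{(i+1)} - f_1^{(i)}, g_i\rangle.
$$
Since $|g_i| \equiv 1$, Cauchy-Schwarz gives $\|f_1^{(i+1)} - f_1^{(i)}\|_2 \ge \delta(\eps)$. On the other hand $\mathcal{B}_{i+1}$ refines $\mathcal{B}_i$, and $f_1^{(i+1)} - f_1^{(i)}$ has zero conditional expectation with respect to $\mathcal{B}_i$, so by Observation~\ref{obs:project_factor} again $\langle f_1^{(i+1)} - f_1^{(i)}, f_1^{(i)}\rangle = 0$. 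This yields the Pythagorean identity
$$
\|f_1^{(i+1)}\|_2^2 = \|f_1^{(i)}\|_2^2 + \|f_1^{(i+1)} - f_1^{(i)}\|_2^2 \ge \|f_1^{(i)}\|_2^2 + \delta(\eps)^2.
$$

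Since $\|f_1^{(i)}\|_2 \le \|f\|_\infty \le 1$ at every step, the process must terminate after at most $1/\delta(\eps)^2$ iterations, so we may take $C_{\max}(p,d,\eps) := \lfloor 1/\delta(\eps)^2 \rfloor$.

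The main obstacle is essentially the input: the existence of the quantitative inverse theorem (Theorem~\ref{thm:inverse}) is the genuinely deep ingredient, and the hypothesis $d < p$ is used precisely to invoke Lemma~\ref{lem:phasePoly} so that the correlating phase polynomials have the form $\exp(Q_i)$ and can be absorbed into a polynomial factor. The remaining work is the standard verification that refining by $Q_i$ produces an honest refinement of the sigma-algebra and that the orthogonality underlying the energy increment holds, both of which follow from Observation~\ref{obs:project_factor}.
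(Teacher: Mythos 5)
Your argument is correct and is essentially the paper's own proof: the same iterative refinement driven by Theorem~\ref{thm:inverse} and Lemma~\ref{lem:phasePoly}, with the energy increment phrased via the Pythagorean growth of $\|\E(f\mid\mathcal{B}_i)\|_2^2$ rather than the decay of $\|f_2^{(i)}\|_2^2$ --- the same bookkeeping. The only detail worth flagging is that $\|f_2^{(i)}\|_\infty \le 2$ rather than $1$, so Theorem~\ref{thm:inverse} should be applied to $f_2^{(i)}/2$ (the paper notes this in passing), which merely replaces $\delta(\eps)$ by a slightly smaller constant.
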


We sketch the standard proof of Theorem~\ref{thm:decompose} below, as we will need some extensions of it in this paper. For a full proof we refer the reader to~\cite{MR2359469}.

\begin{proof}[Proof sketch]
We create a sequence of polynomial factors $\mathcal{B}_1,\mathcal{B}_2,\ldots$ as follows. Let $\mathcal{B}_1$ be the trivial factor (i.e. $\E(f|\mathcal{B}_1)$ is the constant function $\Ex[f]$). Let $g_i = f - \E(f|\mathcal{B}_i)$. If $\|g_i\|_{U^{d+1}} \le \eps$ we are done. Otherwise by Theorem~\ref{thm:inverse}, since $\|g_i\|_{\infty} \le 2$, there exists a polynomial $P_i \in {\rm Poly}_d(\F_p^n)$ such that $\ip{g_i,\exp(P_i)} \ge \delta(\eps)$. Let $\mathcal{B}_{i+1} = \mathcal{B}_i \cup \{P_i\}$. The key point is that one can show that
$$
\|g_{i+1}\|_2^2 \le \|g_i - \ip{g_i,\exp(P_i)}\|_2^2 \le \|g_i\|_2^2 - \delta(\eps).
$$
Thus, the process must stop after at most $1/\delta(\eps)$ steps.
\end{proof}

%
%
Suppose that the factor $\mathcal{B}$ is defined by $P_1,\ldots,P_C \in {\rm Poly}_d(\F_p^n)$. Assume that $f_1(x)=\Gamma(P_1(x),\ldots,P_C(x))$. Using the Fourier decomposition of $\Gamma$, we can express $f_1$ as
\begin{equation}
\label{eq:HigherFourier}
f_1(x) = \sum_{\gamma \in \F_p^C} \widehat{\Gamma}(\gamma) \exp\left(\sum_{i=1}^C \gamma(i) P_i(x)\right).
\end{equation}
Note that for every $\gamma \in \F_p^C$, $\sum_{i=1}^C \gamma(i) P_i(x) \in {\rm Poly}_d(\F_p^n)$. So~(\ref{eq:HigherFourier}) gives an expansion for $f_1$ which is similar to the Fourier expansion, but instead of characters $\exp(\sum \alpha(i) x_i)$, we have exponential functions $\exp\left(\sum_{i=1}^C \gamma(i) P_i(x)\right)$ which have polynomials of degree $d$ in the powers instead of linear functions. For this decomposition to be useful similar to the Fourier expansion, one needs some kind of orthogonality for the functions appearing in the expansion.

\begin{definition}[Bias]
The bias of a polynomial $P \in {\rm Poly}_d(\F_p^n)$ is defined as
$$
\bias(P) := \bias(\exp(P)) = |\Ex_{X \in \F_p^n}[\exp(P(X))]|.
$$
\end{definition}
We shall refine the set of polynomials $\{P_1,\ldots,P_C\}$ to obtain a new set of polynomials with the desired ``approximate orthogonality'' properties. This will be achieved through the notion of the \emph{rank} of a set of polynomials.

\begin{definition}[Rank]  We say a set of polynomials $\P=\{P_1,\ldots,P_t\}$ is of rank greater than $r$, and denote this by $\rank(\P) > r$ if the following holds. For any non-zero $\alpha=(\alpha_1,\ldots,\alpha_t) \in \F_p^t$, define $P_{\alpha}(x) := \sum_{j=1}^t \alpha_j P_j(x)$. For $d := \max \{\deg(P_j):\alpha_j \ne 0\}$, the polynomial $P_{\alpha}$ cannot be expressed as a function of $r$ polynomials of degree at most $d-1$. More precisely, it is not possible to find $r$ polynomials $Q_1,\ldots,Q_r$ of degree at most $d-1$, and a function $\Gamma:\F_p^r \to \F_p$ such that
$$
P_\alpha(x)=\Gamma(Q_1(x),\ldots,Q_r(x)).
$$
The rank of a single polynomial $P$ is defined to be $\rank(\{P\})$.
\end{definition}
The \emph{rank} of a polynomial factor is the rank of the set of polynomials defining it.
The following lemma follows from the definition of the rank. For a proof see~\cite{GreenTaoFiniteFields}.

\begin{lemma}[Making factors high-rank]
\label{lem:regularity}
Let $r:\N \rightarrow \N$ be an arbitrary growth function. Then there is another function  $\tau:\N \rightarrow \N$ with the following property. Let $\mathcal{B}$ be a polynomial factor with complexity at most $C$. Then there is a refinement $\mathcal{B}'$ of $\mathcal{B}$  with complexity at most $C' \le \tau(C)$ and rank at least $r(C')$.
\end{lemma}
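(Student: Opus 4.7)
The plan is to iteratively refine $\mathcal{B}$, each step either certifying that the rank condition already holds or strictly decreasing a well-founded measure of complexity. Set $\mathcal{B}_0 = \mathcal{B}$, of complexity $C_0 = C$, defined by polynomials $P_1,\ldots,P_{C_0}$. At stage $t$, suppose $\mathcal{B}_t$ has complexity $C_t$ and fails to have rank at least $r(C_t)$. Then by definition there exist a nonzero $\alpha \in \F_p^{C_t}$, polynomials $Q_1,\ldots,Q_{r'}$ of degree strictly less than $d' := \max\{\deg P_j : \alpha_j \ne 0\}$ with $r' \le r(C_t)$, and a function $\Gamma:\F_p^{r'} \to \F_p$ satisfying $P_\alpha = \sum_j \alpha_j P_j = \Gamma(Q_1,\ldots,Q_{r'})$. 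Pick any $j^*$ with $\alpha_{j^*} \ne 0$ and $\deg P_{j^*} = d'$. Since $\alpha_{j^*}$ is invertible in $\F_p$, we may solve
$$P_{j^*}(x) = \alpha_{j^*}^{-1}\Bigl(\Gamma(Q_1(x),\ldots,Q_{r'}(x)) - \sum_{j \ne j^*} \alpha_j P_j(x)\Bigr),$$
so $P_{j^*}$ is recoverable from the other $P_j$'s and the new $Q_i$'s. Define $\mathcal{B}_{t+1}$ as the polynomial factor generated by $\{P_j : j \ne j^*\} \cup \{Q_1,\ldots,Q_{r'}\}$; then $\mathcal{B}_{t+1}$ refines $\mathcal{B}_t$ and has complexity at most $C_t - 1 + r(C_t)$.

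For termination, observe that the maximum degree $d := \deg(\mathcal{B})$ is never exceeded by any defining polynomial at any stage, since every new $Q_i$ introduced at stage $t$ has degree strictly smaller than some existing polynomial's degree. Associate to $\mathcal{B}_t$ its degree profile $\nu(\mathcal{B}_t) := (n_d(\mathcal{B}_t), n_{d-1}(\mathcal{B}_t),\ldots,n_1(\mathcal{B}_t))$, where $n_i$ counts defining polynomials of degree exactly $i$. At stage $t$, the refinement removes one polynomial of degree $d'$ and adds only polynomials of degree $<d'$, so $\nu(\mathcal{B}_{t+1})$ is strictly smaller than $\nu(\mathcal{B}_t)$ in lexicographic order (read from highest degree down). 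Organizing the process in phases indexed by decreasing degree, within the phase that acts at degree $d'$ the count $n_{d'}$ strictly decreases and cannot be increased later, so that phase terminates after finitely many steps; the resulting upper bound on $C_t$ at the end of the phase is a bounded function of $C$ and $r$, and the lower-degree counts grow only by amounts controlled by $r$ evaluated at these bounds. Composing across the at most $d$ phases gives a finite bound $C' \le \tau(C)$ depending only on $C$ and $r$, which is the desired function $\tau$.

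The main obstacle is bookkeeping the complexity growth cleanly: since each refinement invokes the rank failure with $r(C_t)$ and $C_t$ itself can grow when we introduce up to $r(C_t)$ new polynomials, a naive recursion could blow up. The way out is precisely the phase-by-phase argument above, which exploits the fact that $n_d$ is bounded by $C$ and monotonically nonincreasing, so the first phase runs for at most $C$ steps; once $n_d = 0$ we never again create polynomials of degree $\ge d-1$ beyond those already present, so the second phase starts from a factor whose complexity is bounded in terms of $C$ and $r$, and similarly for subsequent phases. This iterated control yields a concrete, albeit rapidly growing, bound $\tau(C)$, and completes the proof.
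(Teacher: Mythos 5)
Your argument is correct and is essentially the standard proof of this lemma, which the paper itself does not reproduce but delegates to Green and Tao~\cite{GreenTaoFiniteFields}: upon a rank failure, replace one top-degree polynomial $P_{j^*}$ of the offending combination by the lower-degree witnesses $Q_1,\ldots,Q_{r'}$ (legitimate because $\alpha_{j^*}$ is invertible, so the new collection determines $P_{j^*}$ and hence refines $\mathcal{B}_t$), and terminate via the well-founded lexicographic order on degree profiles. The one imprecision is that the steps at different degrees can interleave in time (a degree-$d$ failure may occur after some degree-$(d-1)$ steps have already happened), so your ``phases'' are not literally consecutive; but the bookkeeping survives because $n_{d'}$ can only be increased by steps at degrees strictly above $d'$, of which there are finitely many with growth controlled by $r$, and unwinding this recursion from the top degree downward still yields a computable bound $\tau(C)$ depending only on $C$, the degree bound, and $r$.
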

%
The following theorem due to Kaufman and Lovett~\cite{kaufman-lovett}  connects the notion of the rank to the bias of a polynomial. It was proved first by Green and Tao~\cite{GreenTaoFiniteFields} for the case $d<p$, and then extended by Kaufman and Lovett~\cite{kaufman-lovett} for the general case.
\begin{theorem}[Regularity~\cite{kaufman-lovett}]
\label{thm:regularity}
Fix $p$ prime and $d \ge 1$. There exists a function $r_{p,d}:(0,1] \rightarrow \mathbb{N}$ such that the following holds. If $P:\F_p^n \rightarrow \F_p$ is a polynomial of degree at most $d$ with $\bias(P) \ge \eps$, then $\rank(P) \le r_{p,d}(\eps)$.
\end{theorem}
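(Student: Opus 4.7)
We argue by induction on $d$. The base case $d = 1$ is immediate: a non-constant affine polynomial has bias $0$, so if $\bias(P) \ge \eps > 0$ then $P$ is constant and trivially $\rank(P) = 0$.

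For the inductive step, assume the statement for all degrees less than $d$, and let $P \in {\rm Poly}_d(\F_p^n)$ with $\bias(P) \ge \eps$. The first step is to transfer the bias hypothesis from $P$ to its derivatives. Expanding $|\Ex_x \exp(P(x))|^2$ via the substitution $x \mapsto x + y$ yields
$$
\bias(P)^2 \;=\; \Ex_y \bigl[ \Ex_x \exp(\Delta_y P(x)) \bigr],
$$
so that $\Ex_y \bias(\Delta_y P) \ge \bias(P)^2 \ge \eps^2$ by the triangle inequality. By Markov's inequality, the set $Y := \{y \in \F_p^n : \bias(\Delta_y P) \ge \eps^2/2\}$ has density at least $\eps^2/2$. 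Since $\deg(\Delta_y P) \le d-1$, the inductive hypothesis applies: for every $y \in Y$, the derivative $\Delta_y P$ is a function of at most $R := r_{p,d-1}(\eps^2/2)$ polynomials of degree at most $d-2$.

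The main obstacle is then the passage from the local statement ``most derivatives $\Delta_y P$ have low rank'' to the global statement ``$P$ itself has rank bounded in terms of $R$, $p$, $d$.'' The intuition is that since $P$ is reconstructed from its derivatives, the structural constraints on the family $\{\Delta_y P\}_{y \in Y}$ should propagate to a single global decomposition of $P$ itself. In the high-characteristic regime $d < p$ originally treated by Green and Tao, this is carried out through a polarization-type identity which extracts the top-degree symmetric multilinear form associated to $P$ from its iterated derivatives and then reconstructs $P$ modulo lower-degree corrections that are handled recursively by the inductive hypothesis. The general characteristic case $d \ge p$, where polarization is unavailable, requires the more delicate argument of Kaufman and Lovett: one argues by contradiction, assuming $\rank(P) > r$ for a carefully chosen threshold $r = r(R,p,d)$, and derives a contradiction with the bias hypothesis via an iterative rank-reduction scheme that shows the low-rank decompositions of $\Delta_y P$ for many $y \in Y$ would otherwise force $P$ itself into a decomposition of rank at most $r$. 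Tracking the quantitative dependencies through the iteration produces the required function $r_{p,d}(\eps)$.
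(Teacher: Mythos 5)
The paper does not prove Theorem~\ref{thm:regularity}; it is imported verbatim from Green--Tao \cite{GreenTaoFiniteFields} (for $d<p$) and Kaufman--Lovett \cite{kaufman-lovett} (general $d$), so there is no internal proof to compare yours against. Judged on its own terms, your write-up is not a proof but a proof sketch whose hard step is missing. The opening reduction is fine and standard: the identity $\bias(P)^2=\Ex_y\bigl[\Ex_x \exp(\Delta_y P(x))\bigr]$, the Markov step producing a set $Y$ of density at least $\eps^2/2$ on which $\bias(\Delta_y P)\ge \eps^2/2$, and the application of the inductive hypothesis to each $\Delta_y P$ are all correct (modulo the minor point that $\Delta_y P$ may have degree strictly less than $d-1$, in which case the inductive hypothesis controls its rank relative to an even lower degree, which only helps).

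The genuine gap is exactly the step you flag as ``the main obstacle'': deducing from ``$\Delta_y P$ has rank at most $R$ for a positive-density set of $y$'' that $P$ itself has rank bounded in terms of $R$, $p$, $d$. This local-to-global propagation is the entire content of the theorem --- it is where Green and Tao need the symmetry/polarization argument on the associated multilinear form together with a linear-algebraic argument over the good directions $y$, and where Kaufman and Lovett need their derivative-space machinery to handle $d\ge p$. Your text describes what such an argument ``would'' do and then defers to ``the more delicate argument of Kaufman and Lovett,'' i.e., to the very source the theorem is being quoted from; that is circular and cannot be counted as a proof. To close the gap you would need to actually exhibit, from the low-rank decompositions of the $\Delta_y P$ for $y\in Y$, a bounded collection of polynomials of degree at most $d-1$ of which $P$ is a function, with explicit control of the bound in terms of $R$, $p$, $d$ --- none of which is present. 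As it stands, the correct conclusion of your argument is only the (true but much weaker) statement that a positive density of derivatives of $P$ have bounded rank.
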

%
%
%
Combining Lemma~\ref{lem:regularity} with Theorem~\ref{thm:decompose}, it is possible to obtain a strong decomposition theorem.
\begin{theorem}[Strong Decomposition Theorem~\cite{MR2359469}]
\label{thm:decompose_strong}
Let $p$ be a fixed prime, $0\le d<p$ be an integer,  $\delta>0$, and let $r:\N \rightarrow \N$ be an arbitrary growth function. Given any function $f:\F_p^n \rightarrow \mathbb{D}$, there exists a decomposition
$$
f=f_1+f_2,
$$
such that
$$\mbox{$f_1:=\E(f|\mathcal{B})$, \qquad $\|f_2\|_{U^{d+1}} \le \delta$,}$$
where  $\mathcal{B}$ is a polynomial factor of degree at most $d$, complexity  $C \le C_{\max}(p,d,\delta,r(\cdot))$, and rank at least $r(C)$.
\end{theorem}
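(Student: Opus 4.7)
The plan is to combine the basic Decomposition Theorem (Theorem~\ref{thm:decompose}) with the rank-refinement Lemma~\ref{lem:regularity} via an iterative energy-increment argument, essentially interleaving the two procedures. I would build a sequence of polynomial factors $\mathcal{B}_1, \mathcal{B}_2, \ldots$ of degree at most $d$, each of high rank, such that the energy $\|\E(f|\mathcal{B}_i)\|_2^2$ grows by a definite amount per step until the Gowers norm of $f-\E(f|\mathcal{B}_i)$ falls below $\delta$.

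First I would start with $\mathcal{B}_1$ the trivial factor (so $\E(f|\mathcal{B}_1) = \E[f]$ and $C_1 = 0$). At step $i$, set $g_i := f - \E(f|\mathcal{B}_i)$. If $\|g_i\|_{U^{d+1}} \le \delta$, stop and output $f_1 := \E(f|\mathcal{B}_i)$, $f_2 := g_i$. Otherwise, invoke the inverse theorem (Theorem~\ref{thm:inverse}) applied to $g_i/2$ (which lies in $\mathbb{D}$ since $\|f\|_\infty \le 1$ and $\|\E(f|\mathcal{B}_i)\|_\infty \le 1$): there exists a phase polynomial of degree at most $d$, and since $d<p$ by Lemma~\ref{lem:phasePoly} this is of the form $\exp(Q_i)$ with $Q_i \in {\rm Poly}_d(\F_p^n)$, such that $|\langle g_i, \exp(Q_i)\rangle| \ge \delta'(\delta)$ for some $\delta'(\delta) > 0$. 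Let $\widetilde{\mathcal{B}}_{i+1}$ be the factor obtained by adjoining $Q_i$ to $\mathcal{B}_i$, and then let $\mathcal{B}_{i+1}$ be the refinement of $\widetilde{\mathcal{B}}_{i+1}$ produced by Lemma~\ref{lem:regularity} with growth function $r$, so that $\mathcal{B}_{i+1}$ has complexity $C_{i+1} \le \tau(C_i + 1)$ and rank at least $r(C_{i+1})$.

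The central energy-increment estimate is
\begin{equation*}
\|\E(f|\mathcal{B}_{i+1})\|_2^2 \;\ge\; \|\E(f|\mathcal{B}_i)\|_2^2 + \delta'(\delta)^2.
\end{equation*}
To see this, note that $\mathcal{B}_{i+1}$ refines $\mathcal{B}_i$ and contains $Q_i$, so by Pythagoras, it suffices to show that $\|\E(f|\widetilde{\mathcal{B}}_{i+1})\|_2^2 \ge \|\E(f|\mathcal{B}_i)\|_2^2 + \delta'(\delta)^2$. Since $\exp(Q_i)$ is $\widetilde{\mathcal{B}}_{i+1}$-measurable, by Observation~\ref{obs:project_factor} we have $\langle g_i, \exp(Q_i)\rangle = \langle \E(g_i|\widetilde{\mathcal{B}}_{i+1}), \exp(Q_i)\rangle$, and $\|\exp(Q_i)\|_2 = 1$, so $\|\E(g_i|\widetilde{\mathcal{B}}_{i+1})\|_2 \ge \delta'(\delta)$. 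The identity $\E(g_i|\widetilde{\mathcal{B}}_{i+1}) = \E(f|\widetilde{\mathcal{B}}_{i+1}) - \E(f|\mathcal{B}_i)$ (using that $\E(f|\mathcal{B}_i)$ is already $\widetilde{\mathcal{B}}_{i+1}$-measurable) together with another application of Pythagoras yields the claim.

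Since $\|\E(f|\mathcal{B}_i)\|_2^2 \le 1$ at every step, the process must terminate in at most $1/\delta'(\delta)^2$ steps. The resulting $\mathcal{B}$ has complexity bounded by iterating $C \mapsto \tau(C+1)$ a number of times depending only on $p,d,\delta$, which gives the claimed bound $C_{\max}(p,d,\delta,r(\cdot))$, and by construction its rank is at least $r(C)$. The main obstacle I anticipate is simply the bookkeeping: one must verify that the rank-refinement step, which can add many new polynomials, does not break the energy-increment mechanism, but this follows cleanly from Pythagoras because any refinement of a sigma-algebra can only increase the squared $L^2$ norm of the conditional expectation.
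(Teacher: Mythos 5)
Your proposal is correct and follows essentially the same route as the paper's own proof sketch: an energy-increment iteration driven by Theorem~\ref{thm:inverse} (as in Theorem~\ref{thm:decompose}), interleaved with rank-regularization via Lemma~\ref{lem:regularity}, with Pythagoras guaranteeing that refinement never decreases the energy $\|\E(f|\mathcal{B}_i)\|_2^2$. The only (harmless) difference is that you regularize after adjoining the new polynomial while the paper regularizes before; your ordering in fact makes it slightly cleaner that the factor output at termination has the required rank.
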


We sketch the proof below. For a full proof we refer the reader to~\cite{MR2359469}.

\begin{proof}[Proof sketch]
The proof follows the same steps as the proof of Theorem~\ref{thm:decompose}, except that at each step, we regularize each polynomial factor $\mathcal{B}_i$ to obtain $\mathcal{B'}_i$, and set $\mathcal{B}_{i+1} = \mathcal{B'}_i \cup \{P_i\}$. The only new insight is that as $\mathcal{B'}_i$ is a refinement of $\mathcal{B}_i$ we have
$$
\|f - \E(f|\mathcal{B'}_i)\|_2 \le \|f - \E(f|\mathcal{B}_i)\|_2.
$$
\end{proof}
Note that Theorem~\ref{thm:decompose_strong} guarantees a strong approximate orthogonality. For every fixed function $\omega:\mathbb{N} \rightarrow \mathbb{N}$, by taking $r(\cdot)$ to be a sufficiently fast growing function, one can guarantee that the polynomials $P_1,\ldots,P_C$ that define the factor $\mathcal{B}$ have the property
\begin{equation}
\label{eq:approxOrth}
\left|\Ex \left[\exp\left(\sum_{i=1}^C \gamma(i) P_i(X)\right)\right]\right| \le 1/\omega(C),
\end{equation}
for all nonzero $\gamma \in \F_p^C$. That is, the polynomials can be made ``nearly orthogonal'' to any required precision.

%
%
The decomposition theorems stated to far referred to a single function. In this paper we require  decomposition theorems which relate to several functions with a single polynomial factor. The proofs can be adapted in a straight-forward manner to prove the next result.
\begin{lemma}[Strong Decomposition Theorem - multiple functions]
\label{lemma:decompose_multiple_func}
Let $p$ be a fixed prime, $0\le d<p$ and $m$ be integers, let $\delta>0$, and let $r:\N \rightarrow \N$ be an arbitrary growth function. Given every set of functions $f_1,\ldots,f_m:\F_p^n \rightarrow \mathbb{D}$, there exists a decomposition of each $f_i$ as
$$
f_i=h_{i}+h'_{i},
$$
such that
$$\mbox{$h_{i}:=\E(f_i|\mathcal{B})$, \qquad $\|h'_{i}\|_{U^{d+1}} \le \delta$,}$$
where  $\mathcal{B}$ is a polynomial factor of degree at most $d$, complexity  $C \le C_{\max}(p,d,\delta,m,r(\cdot))$ and rank at least $r(C)$. Furthermore we can assume that $\mathcal{B}$ is defined by homogeneous polynomials.
\end{lemma}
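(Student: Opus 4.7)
\medskip\noindent\textbf{Proof plan.} The plan is to iterate the one-function construction of Theorem~\ref{thm:decompose_strong} while maintaining a single polynomial factor $\mathcal{B}$ that simultaneously handles all $m$ functions $f_1,\ldots,f_m$. Define the energy $\Phi(\mathcal{B}) := \sum_{i=1}^m \|\E(f_i \mid \mathcal{B})\|_2^2 \in [0,m]$. Start with the trivial factor $\mathcal{B}_0$. At step $t$, if $\|f_i - \E(f_i\mid \mathcal{B}_t)\|_{U^{d+1}} \le \delta$ for every $i \in [m]$, stop and output $h_i := \E(f_i\mid\mathcal{B}_t)$, $h'_i := f_i - h_i$. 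Otherwise pick some index $i$ for which the norm exceeds $\delta$ and apply the inverse direction of Theorem~\ref{thm:inverse} to the bounded function $\tfrac12(f_i - \E(f_i\mid \mathcal{B}_t))$, obtaining a polynomial $P\in {\rm Poly}_d(\F_p^n)$ with $|\ip{f_i - \E(f_i\mid \mathcal{B}_t), \exp(P)}| \ge \eta$, where $\eta = \eta(\delta,d) > 0$ is supplied by the inverse theorem.

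To preserve homogeneity of the factor, decompose $P = P^{(0)} + P^{(1)} + \cdots + P^{(d)}$ into its homogeneous components and adjoin all of $P^{(0)},\ldots,P^{(d)}$ to $\mathcal{B}_t$, producing $\mathcal{B}^*_t$. Since $\exp(P) = \prod_j \exp(P^{(j)})$ is $\mathcal{B}^*_t$-measurable, Observation~\ref{obs:project_factor} plus the Pythagorean identity for the refinement $\mathcal{B}_t\subseteq\mathcal{B}^*_t$ give
$$\|\E(f_i\mid\mathcal{B}^*_t)\|_2^2 \ge \|\E(f_i\mid\mathcal{B}_t)\|_2^2 + |\ip{f_i - \E(f_i\mid\mathcal{B}_t), \exp(P)}|^2 \ge \|\E(f_i\mid\mathcal{B}_t)\|_2^2 + \eta^2,$$
while for $i'\ne i$ the conditional-expectation norm only grows under refinement. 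Hence $\Phi(\mathcal{B}^*_t)\ge \Phi(\mathcal{B}_t)+\eta^2$. Now regularize $\mathcal{B}^*_t$ by a homogeneous version of Lemma~\ref{lem:regularity} (described below) to obtain $\mathcal{B}_{t+1}$ of complexity at most $\tau(|\mathcal{B}^*_t|)$, rank at least $r(|\mathcal{B}_{t+1}|)$, and defined by homogeneous polynomials; since $\mathcal{B}_{t+1}$ refines $\mathcal{B}^*_t$, one has $\Phi(\mathcal{B}_{t+1})\ge \Phi(\mathcal{B}_t)+\eta^2$. Because $\Phi\le m$, the process halts after at most $m/\eta^2$ steps, giving the claimed complexity bound $C_{\max}(p,d,\delta,m,r(\cdot))$.

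The one genuinely new step is the homogeneous variant of Lemma~\ref{lem:regularity}, and this is where I expect the small technical obstacle to lie. The standard proof repeatedly replaces a low-rank polynomial $Q$ in the factor by polynomials $Q_1,\ldots,Q_r$ of strictly lower degree for which $Q = \Gamma(Q_1,\ldots,Q_r)$. To keep the factor homogeneous, whenever such $Q_j$ are produced we further replace each by its (at most $d$) homogeneous components; the enlarged collection of homogeneous, strictly-lower-degree polynomials still expresses $Q$ through the function $\Gamma$ applied to the appropriate sums. Because the multi-set of degrees of the defining polynomials strictly decreases in lexicographic order from the top, this process terminates, yielding a homogeneous refinement of prescribed rank whose complexity is controlled by the same $\tau$ as in the inhomogeneous case (up to a factor of $d+1$). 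Combining this homogeneous regularization with the energy-increment iteration above produces the desired simultaneous decomposition of $f_1,\ldots,f_m$.
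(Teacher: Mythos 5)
Your proposal is correct and follows essentially the approach the paper intends: the paper gives no separate proof of Lemma~\ref{lemma:decompose_multiple_func}, stating only that the sketches of Theorems~\ref{thm:decompose} and~\ref{thm:decompose_strong} ``can be adapted in a straight-forward manner,'' and your summed energy increment $\Phi(\mathcal{B})=\sum_i\|\E(f_i\mid\mathcal{B})\|_2^2\le m$ is exactly that adaptation. Your handling of the homogeneity requirement (splitting each adjoined polynomial into homogeneous components, both in the increment step and inside the regularization of Lemma~\ref{lem:regularity}) correctly supplies the one detail the paper leaves implicit.
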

%
%
%

\section{Strong orthogonality}
\label{sec:strongOrth}
Let $\mathcal{B}$ be a polynomial factor defined by polynomials $P_1,\ldots,P_C$, and let $f:\F_p^n \to \D$ be a $\mathcal{B}$-measurable function. We saw in~\eqref{eq:HigherFourier} that $f$ can be expressed as a linear combination of $\exp(\sum_{i=1}^C \gamma(i) P_i(x))$, for $\gamma \in \F_p^C$. Furthermore if we require the polynomials to be of  high rank, then we obtain an approximate orthogonality as in~\eqref{eq:approxOrth}.
This  approximate orthogonality is sufficient for analyzing averages such as $\Ex[f_1(X)f_2(X)\ldots f_m(X)]$, where $f_1,\ldots,f_m$ (not necessarily distinct) are all measurable with respect to $\mathcal{B}$. However, it is not a priori clear how this orthogonality can be used to deal with averages of the form $\Ex[f_1(L_1(\X)) \ldots f_m(L_m(\X))]$, for linear forms $L_1,\ldots,L_m$. The difficulty arises when one has to understand exponential averages such as $\Ex[\exp(P(X+Y)-P(X)-P(Y))]$. This average is $1$ when $P$ is a homogeneous polynomial of degree one, but it does not immediately follow from what we have said so far that it is small when $P$ is of higher degree. In this section we develop the results needed to deal with such exponential averages.

Consider a set of homogeneous polynomials $\{P_1,\ldots,P_k\}$, and a set of linear forms $\{L_1,\ldots,L_m\}$. We need to be able to analyze exponential averages of the form:
$$\Ex_{\X} \left[ \exp \left( \sum_{i=1}^k \sum_{j=1}^m  \lambda_{i,j} P_{i}(L_j(\X))\right)\right],$$
where $\lambda_{i,j} \in \F_p$. Lemma~\ref{lem:LinearFormsBias} below shows that if $\{P_1,\ldots,P_k\}$ are of sufficiently high rank, then it is either the case that $\sum_{i=1}^k \sum_{j=1}^m  \lambda_{i,j} P_{i}(L_j(\x)) \equiv 0$, which implies that the corresponding exponential average is exactly $1$, or otherwise the exponential average is very small. Note that this is  an ``approximate'' version of the case of characters. Namely if $\{\chi_{y_1},\ldots,\chi_{y_k}\}$ are characters of $\F_p^n$, then $\Ex\left[ \prod_{i=1}^k \prod_{j=1}^m \chi_{y_i}(L_j(\X)) \right]$ is either $1$ or $0$. In the case of polynomials of high rank, the ``zero'' case is approximated by a small number.

\begin{lemma}
\label{lem:LinearFormsBias}
Fix $p$ prime and $d < p$. Let $\{L_1,\ldots,L_m\}$ be a system of linear forms. Let $\P=\{P_{1},\ldots,P_k\}$ be a collection of homogeneous polynomials of degree at most $d$, such that $\rank(\P) > r_{p,d}(\eps)$. For every set of coefficients $\Lambda = \{\lambda_{i,j} \in \F_p: i \in [k], j\in [m] \}$, and
$$
P_\Lambda(\x) := \sum_{i=1}^k \sum_{j=1}^m  \lambda_{i,j} P_{i}(L_j(\x)),
$$
one of the following two cases holds:
$$\mbox{$P_\Lambda \equiv 0$ \qquad or \qquad $\bias(P_\Lambda) < \eps$}.$$
\end{lemma}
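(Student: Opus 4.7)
The plan is to argue by contradiction: suppose that $P_\Lambda \not\equiv 0$ and $\bias(P_\Lambda) \ge \eps$, and derive a violation of the hypothesis $\rank(\P) > r_{p,d}(\eps)$. Let $k_0$ denote the number of variables of the $L_j$, so that $P_\Lambda$ is a polynomial on $(\F_p^n)^{k_0}$ of degree $d' := \deg(P_\Lambda) \le d$. Applying Theorem~\ref{thm:regularity} to $P_\Lambda$ produces polynomials $R_1,\ldots,R_r$ on $(\F_p^n)^{k_0}$ with $r \le r_{p,d}(\eps)$ and $\deg R_\ell < d'$, together with a function $\Gamma:\F_p^r \to \F_p$ such that $P_\Lambda = \Gamma(R_1,\ldots,R_r)$. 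The strategy is to transfer this low-rank representation of $P_\Lambda$ into a low-rank representation, on $\F_p^n$, of a nontrivial $\F_p$-linear combination of the $P_i$ whose maximum-degree summand still has degree $d'$, directly contradicting the rank hypothesis on $\P$.

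The transfer is effected by the ``diagonal'' restriction $\x = (a_1 x, \ldots, a_{k_0} x)$ for a well-chosen $a \in \F_p^{k_0}$ and a free variable $x \in \F_p^n$. Under this substitution $L_j(\x) = L_j(a) \cdot x$, and homogeneity of each $P_i$ gives $P_i(L_j(\x)) = L_j(a)^{d_i} P_i(x)$; hence
$$
P_\Lambda(a_1 x, \ldots, a_{k_0} x) \;=\; \sum_{i=1}^k \beta_i(a)\, P_i(x), \qquad \beta_i(a) \;:=\; \sum_{j=1}^m \lambda_{i,j}\, L_j(a)^{d_i}.
$$
At the same time this quantity equals $\Gamma(\tilde R_1(x), \ldots, \tilde R_r(x))$, where $\tilde R_\ell(x) := R_\ell(a_1 x, \ldots, a_{k_0} x)$ is a polynomial in $x$ of degree at most $\deg R_\ell \le d' - 1$. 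Thus as soon as we exhibit $a^* \in \F_p^{k_0}$ for which $\beta_{i_0}(a^*) \neq 0$ for some $i_0$ with $d_{i_0} = d'$, the combination $\sum_i \beta_i(a^*) P_i$ is a nonzero $\F_p$-linear combination of the $P_i$ whose maximum degree $d^*$ is at least $d'$, and it is expressible as a function of $r \le r_{p,d}(\eps)$ polynomials of degree at most $d' - 1 \le d^* - 1$ in $x$, directly contradicting $\rank(\P) > r_{p,d}(\eps)$.

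The main obstacle is producing such a witness $a^*$, and this is where the characteristic hypothesis $d < p$ enters twice. Since $d' = \deg(P_\Lambda)$, the degree-$d'$ homogeneous part $P_\Lambda^{d'} = \sum_{i : d_i = d'} \sum_j \lambda_{i,j} P_i(L_j(\x))$ is not identically zero. Because $d' \le d < p$, every $P_i$ with $d_i = d'$ admits a symmetric multilinear polarization $\tilde P_i$ on $(\F_p^n)^{d'}$ with $\tilde P_i(y,\ldots,y) = d'!\,P_i(y)$, and a direct expansion yields
$$
\sum_j \lambda_{i,j} P_i(L_j(\x)) \;=\; \frac{1}{d'!} \sum_{s_1,\ldots,s_{d'}} v_i[s_1,\ldots,s_{d'}]\, \tilde P_i(x_{s_1},\ldots,x_{s_{d'}}),
$$
where $v_i := \sum_j \lambda_{i,j}\, L_j^{d'}$ is precisely the tensor whose contraction with $a^{\otimes d'}$ reproduces $\beta_i(a)$. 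If every $v_i$ with $d_i = d'$ vanished as a tensor, the display would force $P_\Lambda^{d'} \equiv 0$, contradicting $d' = \deg(P_\Lambda)$. Consequently some $v_{i_0} \neq 0$, i.e. $\beta_{i_0}$ is a nonzero polynomial of degree $d' < p$ in $a$; the Schwartz--Zippel lemma then supplies $a^* \in \F_p^{k_0}$ with $\beta_{i_0}(a^*) \neq 0$, closing the argument.
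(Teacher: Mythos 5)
Your proof is correct and follows essentially the same route as the paper: argue by contradiction via the regularity theorem, then transfer the resulting low-complexity representation of $P_\Lambda$ through the diagonal substitution $\x=(a_1x,\ldots,a_{k_0}x)$ to a nontrivial linear combination $\sum_i\beta_i(a^*)P_i$, contradicting $\rank(\P)>r_{p,d}(\eps)$. Your polarization/tensor argument producing a witness $a^*$ with $\beta_{i_0}(a^*)\neq 0$ is exactly the content of the paper's Claim~\ref{claim:coeffs} (there phrased via the coefficients $b_i^{d_i}(\uu)$ of the multilinear expansion), with Schwartz--Zippel playing the role of the observation that distinct $\uu$ give distinct monomials $a^{\uu}$.
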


The proof of Lemma~\ref{lem:LinearFormsBias} is given in Section~\ref{sec:proofofLemmas}. Lemma~\ref{lem:LinearFormsBias} shows that in order to estimate $\bias(P_\Lambda)$ for polynomials of high rank $\P=\{P_{1},\ldots,P_k\}$, it suffices to determine  whether $P_\Lambda$ is identically $0$ or not. Our next observation, whose proof is given in Section~\ref{sec:proofofLemmas}, says that when the polynomials are homogeneous and linearly independent, then $P_\Lambda \equiv 0$ depends only on the set of the coefficients $\lambda_{i,j}$, the linear forms $L_j$, and the \emph{degrees} of the polynomials involved in $P_1,\ldots,P_k$, and not the particular choice of the polynomials.


\begin{lemma}
\label{lem:dependencyShape}
Let $\{L_1,\ldots,L_m\}$ be a system of linear forms over $\F_p^n$,  $\lambda_{i,j} \in \F_p$ for $i \in [k], j\in [m]$, and $d_1,\ldots,d_k \in [d]$. Then one of the following two cases holds:

\begin{enumerate}
\item[(i)] For every collection of linearly independent homogeneous polynomials $P_{1},\ldots,P_k$ of degrees  $d_1,\ldots,d_k$:
$$
\sum_{i=1}^k \sum_{j=1}^m  \lambda_{i,j} P_{i}(L_j(\x)) \equiv 0
$$

\item[(ii)] For every collection of linearly independent homogeneous polynomials $P_{1},\ldots,P_k$ of degrees $d_1,\ldots,d_k$:
$$
\sum_{i=1}^k \sum_{j=1}^m  \lambda_{i,j} P_{i}(L_j(\x)) \not\equiv 0
$$
\end{enumerate}
\end{lemma}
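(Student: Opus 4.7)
The plan is to decompose $\tau(\x) := \sum_{i=1}^k \sum_{j=1}^m \lambda_{i,j}\, P_i(L_j(\x))$ into multi-homogeneous pieces and exhibit, for each piece, a vanishing criterion that is intrinsic to the input data $(\lambda_{i,j}, L_j, d_i)$ and independent of the particular $P_i$'s. The structural point is that for a homogeneous polynomial $P$ of degree $d'$, the expression $P(L_j(\x))$ decomposes cleanly by multi-degree in $\x = (x_1,\ldots,x_k)$, with $L_j$ entering only via scalar monomials in its coefficients and $P$ entering only via its polarization, which is well-defined in characteristic $p > d'$ (the implicit setting, inherited from Lemma~\ref{lem:LinearFormsBias}).

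First I would split by total degree: for each $d' \in \{d_1,\ldots,d_k\}$ set $S_{d'} := \{i : d_i = d'\}$ and $\tau_{d'} := \sum_{i \in S_{d'}} \sum_j \lambda_{i,j}\, P_i(L_j(\x))$. Since $P_i(L_j(\x))$ is homogeneous of degree $d_i$ in $\x$, the pieces $\tau_{d'}$ live in pairwise disjoint graded components of the polynomial ring, so $\tau \equiv 0$ if and only if every $\tau_{d'} \equiv 0$. Each subcollection $\{P_i : i \in S_{d'}\}$ is itself linearly independent, reducing us to the case where all polynomials share a common degree $d'$.

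Writing $L_j(\x) = \sum_s c_{j,s}\, x_s$ and letting $T_i$ be the symmetric $d'$-multilinear form with $T_i(y,\ldots,y) = P_i(y)$ (uniquely determined because $d' < p$), the multinomial expansion gives
\[
P_i(L_j(\x)) \;=\; \sum_{\vec{e}:\, |\vec{e}|=d'} \binom{d'}{\vec{e}} \Big(\prod_s c_{j,s}^{e_s}\Big)\; T_i\bigl(x_1^{[e_1]},\ldots,x_k^{[e_k]}\bigr),
\]
where $x_s^{[e_s]}$ denotes $x_s$ repeated $e_s$ times. Collecting by multi-degree, the $\vec{e}$-component of $\tau_{d'}$ equals
\[
[\tau_{d'}]_{\vec{e}} \;=\; \binom{d'}{\vec{e}}\, \sum_{i \in S_{d'}} \mu_{i,\vec{e}}\, T_i\bigl(x_1^{[e_1]},\ldots,x_k^{[e_k]}\bigr), \qquad \mu_{i,\vec{e}} := \sum_j \lambda_{i,j} \prod_s c_{j,s}^{e_s},
\]
and the scalars $\mu_{i,\vec{e}}$ manifestly depend only on $(\lambda_{i,j}, L_j, \vec{e})$.

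The core technical step, and the main obstacle I foresee, is injectivity of the restricted evaluation map $T \mapsto T(x_1^{[e_1]},\ldots,x_k^{[e_k]})$ on symmetric $d'$-multilinear forms. I would prove this by polarization: substitute $x_s := \sum_{j=1}^{e_s} t_{s,j}\, y_{s,j}$ for formal scalar indeterminates $t_{s,j}$ and arbitrary $y_{s,j} \in \F_p^n$, and extract the coefficient of the squarefree monomial $\prod_{s,j} t_{s,j}$; by multilinearity and symmetry of $T$, this coefficient equals $\bigl(\prod_s e_s!\bigr)\, T(y_{1,1},\ldots,y_{k,e_k})$, a nonzero scalar multiple (since $e_s \le d' < p$) of $T$ evaluated at arbitrary arguments. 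Granting this, linear independence of the $P_i$'s transports via polarization (a bijection for $d' < p$) to linear independence of the $T_i$, and then via the injective evaluation map to linear independence of the polynomials $T_i\bigl(x_1^{[e_1]},\ldots,x_k^{[e_k]}\bigr)$ in $\x$. Hence $[\tau_{d'}]_{\vec{e}} \equiv 0$ if and only if either $\binom{d'}{\vec{e}} = 0$ in $\F_p$ or every $\mu_{i,\vec{e}}$ vanishes; both alternatives are stated purely in terms of the input data, and conjoining over all $d'$ and $\vec{e}$ yields the required dichotomy.
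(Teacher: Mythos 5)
Your proof is correct, and it shares the paper's skeleton: both expand $P_i(L_j(\x))$ via the symmetric multilinear form of $P_i$, collect terms by multi-degree in $(x_1,\ldots,x_k)$, and observe that the resulting coefficients (your $\mu_{i,\vec e}$, the paper's $b_i^{d_i}(\uu)$ up to the multinomial factor $\ell_{d_i}(\uu)=\binom{d_i}{\vec e}$, which is nonzero since $d_i<p$) depend only on $\Lambda$, the $L_j$, and the degrees. Where you differ is the final step showing that a combination with some nonzero coefficient cannot vanish identically. The paper restricts to the diagonal line $\x=(a_1w,\ldots,a_kw)$, notes that distinct multi-degrees give distinct monomials $a^{\uu}$ in the scaling parameters (its Claim 5.4), and then invokes linear independence of the $P_i$ at a single well-chosen $a$; you instead prove injectivity of $T\mapsto T(x_1^{[e_1]},\ldots,x_k^{[e_k]})$ by polarizing with auxiliary scalars $t_{s,j}$ and extracting the squarefree coefficient. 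Both are valid for $d<p$ (an assumption the lemma inherits from the surrounding section, which you correctly flag). The paper's diagonal-restriction device buys more: the identical specialization is reused in the proof of Lemma 5.2 to convert a low-rank representation of $P_\Lambda$ into a low-rank representation of a nontrivial combination of the $P_i$, which your component-by-component injectivity argument would not give directly. Your route, on the other hand, yields the slightly sharper intrinsic criterion (vanishing of each $\mu_{i,\vec e}$ separately) as an explicit equivalence rather than as a by-product.
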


Sometimes one needs to deal with non-homogeneous polynomials. This for example is the case in our subsequent paper~\cite{TestingPaper} about correlation testing for affine invariant properties on $\F_p^n$. The following lemma shows that if the set of the linear forms is homogeneous, then the case of the non-homogeneous polynomials reduces to the homogeneous polynomials. Let $P:\F_p^n \rightarrow \F_p$ be a polynomial of degree at most $d$. For $1 \le l \le d$, let $P^{(l)}$ denote the homogeneous polynomial that is obtained from $P$ by removing all the monomials whose degrees are \emph{not} equal to $l$.

\begin{lemma}
\label{lem:nonhomogen}
Let $\mathcal{L}=\{L_1,\ldots,L_m\}$ be a \emph{homogeneous} system of linear forms. Furthermore, let $P_1,\ldots,P_k$ be polynomials of degrees  $d_1,\ldots,d_k$ such that $\{P_i^{(d_i)}: i \in [k]\}$ are linearly independent over $\F_p$. Then for  $\lambda_{i,j} \in \F_p$,
$$\mbox{$\sum_{i=1}^k \sum_{j=1}^m \lambda_{i,j} P_i(L_j(\x)) \equiv 0 \Longleftrightarrow \sum_{i=1}^k \sum_{j=1}^m  \lambda_{i,j} P_i^{(d_i)}(L_j(\x)) \equiv 0$}  $$
\end{lemma}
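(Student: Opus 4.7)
The plan is to reduce to canonical form and then peel $F$ and $G$ apart degree by degree. By Observation~\ref{obs:canonical} I may assume $\mathcal{L}$ is in canonical form, $L_j(\x)=x_1+M_j(\x')$ with $\x'=(x_2,\ldots,x_k)$. Since $P_i^{(l)}(L_j(\x))$ is homogeneous of degree $l$ in $\x$, the polynomials $F:=\sum_{i,j}\lambda_{i,j}P_i(L_j(\x))$ and $G:=\sum_{i,j}\lambda_{i,j}P_i^{(d_i)}(L_j(\x))$ split into homogeneous components $F=\sum_l F^{(l)}$ and $G=\sum_l G^{(l)}$ with
\[
F^{(l)}=\sum_{i:\,d_i\ge l}\sum_j \lambda_{i,j} P_i^{(l)}(L_j(\x)),\qquad G^{(l)}=\sum_{i:\,d_i=l}\sum_j \lambda_{i,j} P_i^{(l)}(L_j(\x)).
\]
In particular $F^{(d)}=G^{(d)}$ for $d:=\max_i d_i$, and each of $F\equiv 0$, $G\equiv 0$ is equivalent to the vanishing of every homogeneous component.

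The key structural step is a monotonicity coming from the canonical form. Expanding $(x_1+M_j(\x'))^\alpha$ by the multinomial theorem and matching coefficients of $x_1^{\alpha-\gamma}$ shows that, provided $|\alpha|<p$, the identity $\sum_j\lambda_{i,j}L_j(\x)^\alpha\equiv 0$ is equivalent to $\sum_j\lambda_{i,j}M_j(\x')^\gamma\equiv 0$ for every $\gamma$ with $\gamma\le\alpha$ componentwise. Letting $\alpha$ range over all multi-indices of degree $l$ yields $\sum_j\lambda_{i,j}M_j(\x')^\gamma\equiv 0$ for every $|\gamma|\le l$, hence by linearity $\sum_j\lambda_{i,j}Q(L_j(\x))\equiv 0$ for every polynomial $Q$ of degree at most $l$. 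In parallel I will upgrade Lemma~\ref{lem:dependencyShape} to a per-index statement: by the dichotomy in that lemma, $G^{(l)}\equiv 0$ implies $\sum_{i:\,d_i=l,j}\lambda_{i,j}P'_i(L_j(\x))\equiv 0$ for \emph{every} linearly independent tuple $\{P'_i\}$ of homogeneous polynomials of degree $l$; taking $P'_i=y^{\beta_i}$ for distinct multi-indices $\beta_i$ of size $l$ and then replacing a single $y^{\beta_{i_0}}$ by $y^{\beta_{i_0}}+y^{\alpha'}$ (with $\alpha'$ outside $\{\beta_i\}$) and subtracting the two resulting identities isolates $\sum_j\lambda_{i_0,j}L_j(\x)^{\alpha'}\equiv 0$, valid for every $\alpha'$ of size $l$ once $n$ is large enough to provide the extra monomial.

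Given these tools, I will run the equivalence by downward induction on $l$ from $d$ to $0$. For the forward direction, suppose $F\equiv 0$ and, inductively, the shape condition $\sum_j\lambda_{i,j}L_j(\x)^\alpha\equiv 0$ for $|\alpha|=l'$ has been established for every $l'>l$ and every $i$ with $d_i=l'$. Monotonicity then gives $\sum_j\lambda_{i,j}P_i^{(l)}(L_j(\x))\equiv 0$ for every such $i$, so
\[
F^{(l)}-G^{(l)}\;=\;\sum_{l'>l}\sum_{i:\,d_i=l'}\sum_j\lambda_{i,j}P_i^{(l)}(L_j(\x))\equiv 0,
\]
whence $G^{(l)}\equiv 0$; applying Lemma~\ref{lem:dependencyShape} to the linearly independent family $\{P_i^{(l)}\}_{i:d_i=l}$ together with the per-index extraction yields the shape condition at level $l$, closing the step and, once the recursion terminates, giving $G\equiv 0$. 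The converse is symmetric: from $G\equiv 0$ each $G^{(l)}\equiv 0$ yields the shape condition at level $l$, from which monotonicity (taking $Q=P_i$ of degree $d_i$) produces $\sum_j\lambda_{i,j}P_i(L_j(\x))\equiv 0$ for every $i$, and summation over $i$ gives $F\equiv 0$. The main technical subtlety lies in the per-index extraction from Lemma~\ref{lem:dependencyShape}; the monomial-variation trick described above handles it, relying on the standing assumptions that $p$ and $n$ are sufficiently large to provide the needed freedom in choosing multi-indices.
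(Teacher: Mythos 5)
Your proof is correct, and it rests on the same fundamental insight as the paper's: pass to the canonical form of Observation~\ref{obs:canonical} and use the distinguished variable (appearing with coefficient $1$ in every $L_j$) to propagate vanishing information between degrees, with the multinomial/binomial coefficients being nonzero because the degrees are below $p$. The execution, however, is genuinely different. The paper expands $P_\Lambda$ in the symmetric multilinear forms $B_i^l$ associated with the homogeneous pieces (equation~\eqref{eq:PlambdaExp2}); the resulting coefficients $b_i^l(\uu)$ are \emph{already} separated by the index $i$, and a one-line computation ($b_i^{t+1}(\uu')=\tfrac{\ell_{t+1}(\uu')}{\ell_t(\uu)}\,b_i^t(\uu)$ for $\uu'=(1,u_1,\dots,u_t)$) shows that vanishing of the top-degree coefficients forces vanishing of all lower ones, which immediately gives the equivalence. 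You instead work at the level of homogeneous components $F^{(l)},G^{(l)}$ and must first recover the per-index information from the aggregated identity $G^{(l)}\equiv 0$; you do this by invoking Lemma~\ref{lem:dependencyShape} as a black box and perturbing the polynomial family ($y^{\beta_{i_0}}\mapsto y^{\beta_{i_0}}+y^{\alpha'}$), then close a downward induction on $l$ using your monomial-level monotonicity. Both your key sublemmas are correct, and your monotonicity statement (the shape condition at level $l$ kills $\sum_j\lambda_{i,j}Q(L_j(\x))$ for \emph{every} $Q$ of degree at most $l$) is in fact a clean reformulation worth having. The costs of your route are that it is longer, that it implicitly uses $d<p$ in two places rather than one, and that the extraction step needs $n$ large enough to supply $|\{i:d_i=l\}|+1$ distinct degree-$l$ monomials --- a harmless but genuine extra hypothesis that the paper's direct coefficient computation avoids entirely.
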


Using Lemma~\ref{lem:nonhomogen} one can derive the following analog of Lemma~\ref{lem:LinearFormsBias} for averages of nonhomogeneous polynomials when the system of linear forms is homogeneous.

\begin{lemma}
\label{lem:LinearFormsBias-nonhomogeneous}
Fix $p$ prime and $d < p$. Let $\{L_1,\ldots,L_m\}$ be a homogeneous system of linear forms. Let $\P=\{P_{1},\ldots,P_k\}$ be a collection of polynomials of degree at most $d$, such that $\rank(\P) > r_{p,d}(\eps)$. For every set of coefficients $\Lambda = \{\lambda_{i,j} \in \F_p: i \in [k], j\in [m] \}$, and
$$
P_\Lambda(\x) := \sum_{i=1}^k \sum_{j=1}^m  \lambda_{i,j} P_{i}(L_j(\x)),
$$
one of the following two cases holds:
$$\mbox{$P_\Lambda \equiv 0$ \qquad or \qquad $\bias(P_\Lambda) < \eps$}.$$
\end{lemma}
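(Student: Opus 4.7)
The plan is to reduce Lemma~\ref{lem:LinearFormsBias-nonhomogeneous} to its homogeneous counterpart Lemma~\ref{lem:LinearFormsBias}, by passing to the leading homogeneous parts $\{P_i^{(d_i)}\}$ and bridging the ``$\equiv 0$'' case with Lemma~\ref{lem:nonhomogen}. The homogeneity hypothesis on $\mathcal{L}$ enters only through Lemma~\ref{lem:nonhomogen}, which is the key lever for handling the non-homogeneous polynomials.

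First I would verify that the leading parts $\{P_i^{(d_i)}:i\in[k]\}$ are linearly independent and inherit essentially the rank of $\P$. Linear independence follows from $\rank(\P)>1$: any nontrivial dependence, once restricted to a single top degree $D$, supplies coefficients $\alpha_i$ supported on $\{i:d_i=D\}$ for which $\sum_i \alpha_i P_i$ has degree strictly less than $D=\max\{d_i:\alpha_i\neq 0\}$, and is therefore a function of a single polynomial of degree $\leq D-1$ (itself), contradicting $\rank(\P)>1$. A similar lifting argument — in which a low-rank combination among the $\{P_i^{(d_i)}\}$ is upgraded to a low-rank combination in $\P$ by adding the lower-degree tail as one additional auxiliary polynomial — shows that $\rank(\{P_i^{(d_i)}\})\geq\rank(\P)-1$, which is absorbed into the function $r_{p,d}$ by a trivial modification.

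Next I would apply Lemma~\ref{lem:LinearFormsBias} to the homogeneous, high-rank system $\P'=\{P_i^{(d_i)}\}$ with coefficients $\Lambda$. This produces a dichotomy for the top-degree homogeneous component $P_\Lambda^{\mathrm{lead}}(\x):=\sum_{i,j}\lambda_{i,j}P_i^{(d_i)}(L_j(\x))$ of $P_\Lambda$. If $P_\Lambda^{\mathrm{lead}}\equiv 0$, then Lemma~\ref{lem:nonhomogen} (whose linear independence hypothesis was just verified, and whose homogeneity hypothesis on $\mathcal{L}$ is given) transfers this to $P_\Lambda\equiv 0$, completing the proof. Otherwise $\bias(P_\Lambda^{\mathrm{lead}})<\eps$, and we transfer this bound to $P_\Lambda$ via rank: since the rank of a polynomial upper-bounds the rank of its top-degree part (any representation $P=\Gamma(Q_1,\ldots,Q_r)$ with $\deg Q_s<\deg P$ restricts to one for the leading part with the same $r$), high rank of $P_\Lambda^{\mathrm{lead}}$ forces $\rank(P_\Lambda)>r_{p,d}(\eps)$, whence Theorem~\ref{thm:regularity} delivers $\bias(P_\Lambda)<\eps$.

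The main obstacle lies in the last step: the statement of Lemma~\ref{lem:LinearFormsBias} only furnishes a bias bound on $P_\Lambda^{\mathrm{lead}}$, and Theorem~\ref{thm:regularity} is a one-way implication (high rank $\Rightarrow$ small bias) that cannot be inverted to extract high rank from small bias. To secure the required rank lower bound on $P_\Lambda^{\mathrm{lead}}$, one must either open the proof of Lemma~\ref{lem:LinearFormsBias} and extract its implicit rank statement, or replay the same rank-propagation argument directly in the non-homogeneous setting. Either route is routine once the reduction to $\{P_i^{(d_i)}\}$ via Lemma~\ref{lem:nonhomogen} has been set up as above.
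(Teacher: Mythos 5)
Your proposal is correct and follows essentially the same route as the paper: the paper's proof likewise decomposes each $P_i$ into homogeneous parts, uses the homogeneity of $\mathcal{L}$ via Lemma~\ref{lem:nonhomogen} to show that the top-degree coefficients survive whenever $P_\Lambda\not\equiv 0$, and then replays the rank-propagation argument of Lemma~\ref{lem:LinearFormsBias} rather than citing it as a black box --- precisely the second of the two remedies you propose for the obstacle you correctly diagnose. The only (harmless) imprecision is that when the degrees $d_i$ differ, $\sum_{i,j}\lambda_{i,j}P_i^{(d_i)}(L_j(\x))$ need not be the top-degree homogeneous component of $P_\Lambda$, and the various rank comparisons incur additive constants, all absorbed into $r_{p,d}$.
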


Lemmas~\ref{lem:LinearFormsBias},~\ref{lem:dependencyShape},~\ref{lem:nonhomogen}~and~\ref{lem:LinearFormsBias-nonhomogeneous} show that when studying the averages defined by systems of linear forms, under some homogeneity conditions (either for polynomials or for system of linear forms), high rank polynomials of the same degree sequence behave in a similar manner. The following proposition captures this.

\begin{proposition}[An invariance result]
\label{prop:invariance}
Let $\P=\{P_1,\ldots,P_k\},\Q=\{Q_1,\ldots,Q_k\}$ be two collections of polynomials over $\F_p^n$ of degree at most $d<p$ such that $\deg(P_i)=\deg(Q_i)$ for every $1\le i \le k$. Let $\mathcal{L}=\{L_1,\ldots,L_m\}$ be a system of linear forms, and $\Gamma:\F_p^{k} \to \mathbb{D}$ be an arbitrary function. Define $f,g:\F_p^n \to \mathbb{D}$ by
$$
f(x) = \Gamma(P_1(x),\ldots,P_k(x))
$$
and
$$
g(x) = \Gamma(Q_1(x),\ldots,Q_k(x)).
$$
Let $r_{p,d}:(0,1] \rightarrow \mathbb{N}$ be as given in Lemmas~\ref{lem:LinearFormsBias} and Lemma~\ref{lem:LinearFormsBias-nonhomogeneous}.
Then for every $\eps>0$ if $\rank(\P),\rank(\Q) > r_{p,d}(\eps)$,  we have
$$
\left| t_\mathcal{L}(f) - t_\mathcal{L}(g) \right| \le 2 \eps p^{mk},
$$
provided that at least one of the following two conditions hold:
\begin{enumerate}
\item[(i)] The polynomials $P_1,\ldots,P_k$ and $Q_1,\ldots,Q_k$ are homogeneous.
\item[(ii)] The system of linear forms $\mathcal{L}$ is homogeneous.
\end{enumerate}
\end{proposition}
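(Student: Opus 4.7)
The plan is to Fourier-expand $\Gamma$ on $\F_p^k$ and compare the resulting exponential averages term-by-term. Writing $\Gamma(y_1,\ldots,y_k)=\sum_{\gamma\in\F_p^k}\widehat\Gamma(\gamma)\exp(\sum_i\gamma(i)y_i)$, a direct expansion of $t_\mathcal{L}(f)$ gives
$$
t_\mathcal{L}(f) = \sum_{(\gamma_1,\ldots,\gamma_m)\in(\F_p^k)^m} \Big(\prod_{j=1}^m\widehat\Gamma(\gamma_j)\Big)\,\E_{\X}\!\Big[\exp\!\Big(\sum_{i,j}\gamma_j(i)\,P_i(L_j(\X))\Big)\Big],
$$
and similarly for $t_\mathcal{L}(g)$ with $Q_i$ in place of $P_i$. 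Encoding the coefficients as $\Lambda=(\lambda_{i,j})$ with $\lambda_{i,j}=\gamma_j(i)$, and setting $P_\Lambda(\x)=\sum_{i,j}\lambda_{i,j}P_i(L_j(\x))$ and $Q_\Lambda(\x)=\sum_{i,j}\lambda_{i,j}Q_i(L_j(\x))$, the difference $t_\mathcal{L}(f)-t_\mathcal{L}(g)$ becomes a weighted sum of $\E_\X[\exp(P_\Lambda(\X))]-\E_\X[\exp(Q_\Lambda(\X))]$ over the $p^{mk}$ choices of $\Lambda$.

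For each fixed $\Lambda$, I will show that $P_\Lambda\equiv 0$ if and only if $Q_\Lambda\equiv 0$. In case~(i), both $\{P_i\}$ and $\{Q_i\}$ are homogeneous of the same degree sequence $(d_1,\ldots,d_k)$, and the rank hypothesis forces each collection to be linearly independent (any nontrivial linear relation would give a polynomial that is itself a lower-degree polynomial, contradicting high rank); Lemma~\ref{lem:dependencyShape} then shows that the vanishing of $P_\Lambda$ depends only on $(\Lambda, L_j, d_i)$, which is the same data for $Q_\Lambda$. In case~(ii), I first apply Lemma~\ref{lem:nonhomogen} to reduce the vanishing of $P_\Lambda$ (resp.~$Q_\Lambda$) to the vanishing of the sum formed from the top-degree homogeneous parts $P_i^{(d_i)}$ (resp.~$Q_i^{(d_i)}$). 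The key observation is that the rank hypothesis on $\P$ also forces $\{P_i^{(d_i)}\}$ to be linearly independent: if $\sum_i\alpha_i P_i^{(d_i)}=0$ with $d^*=\max\{d_i:\alpha_i\neq 0\}$, then grouping by top degree gives $\sum_{i:d_i=d^*}\alpha_i P_i^{(d^*)}=0$, so $R:=\sum_{i:d_i=d^*}\alpha_i P_i$ has degree strictly less than $d^*$, violating $\rank(\P)>r_{p,d}(\eps)\ge 1$. The same holds for $\Q$, so Lemma~\ref{lem:dependencyShape} applied to the top-degree parts yields the equivalence.

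Given this equivalence, split the sum over $\Lambda$ into two parts. When $P_\Lambda\equiv 0\equiv Q_\Lambda$, both exponential averages equal $1$ and the term cancels. When $P_\Lambda\not\equiv 0$ (and hence $Q_\Lambda\not\equiv 0$), Lemma~\ref{lem:LinearFormsBias} in case~(i) or Lemma~\ref{lem:LinearFormsBias-nonhomogeneous} in case~(ii), combined with the assumption $\rank(\P),\rank(\Q)>r_{p,d}(\eps)$, yields $|\E_\X[\exp(P_\Lambda(\X))]|<\eps$ and $|\E_\X[\exp(Q_\Lambda(\X))]|<\eps$. Using the trivial bound $|\widehat\Gamma(\gamma)|\le\|\Gamma\|_\infty\le 1$, each such $\Lambda$ contributes at most $2\eps$, and summing over the $p^{mk}$ tuples gives the stated inequality $|t_\mathcal{L}(f)-t_\mathcal{L}(g)|\le 2\eps p^{mk}$.

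The main technical obstacle is the passage from high rank of $\P$ (a property of the full polynomials) to linear independence of the top-degree homogeneous parts $\{P_i^{(d_i)}\}$ in case~(ii); this is what makes Lemma~\ref{lem:nonhomogen} applicable after reducing to the homogeneous situation. Once this transfer is secured, the remaining argument is essentially Fourier bookkeeping together with the two orthogonality lemmas stated earlier in the section.
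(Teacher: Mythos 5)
Your proof is correct and follows essentially the same route as the paper's: Fourier-expand $\Gamma$, use Lemmas~\ref{lem:dependencyShape} and~\ref{lem:nonhomogen} to show $P_\Lambda\equiv 0$ iff $Q_\Lambda\equiv 0$, and bound each of the $p^{mk}$ terms by $2\eps$ via Lemma~\ref{lem:LinearFormsBias} or Lemma~\ref{lem:LinearFormsBias-nonhomogeneous}. Your explicit derivation of linear independence (of the polynomials, resp.\ their top-degree parts) from the rank hypothesis is a detail the paper leaves implicit, and it is a correct and welcome addition rather than a departure.
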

\begin{proof}
The Fourier expansion of  $\Gamma$ shows
$$
\Gamma(z(1),\ldots,z(k)) = \sum_{\gamma \in \F_p^k}\widehat{\Gamma}(\gamma) \exp\left(\sum_{i=1}^k \gamma(i) \cdot z(i) \right).
$$
We thus have
$$
\prod_{i=1}^m f(L_i(\x)) =
\sum_{\gamma_1,\ldots,\gamma_m \in \F_p^k} \widehat{\Gamma}(\gamma_1) \cdot \ldots \cdot \widehat{\Gamma}(\gamma_m) \exp\left(\sum_{i \in [k], j \in [m]}\gamma_{j}(i) \cdot P_{i}(L_j(\x))\right),
$$
and
$$
\prod_{i=1}^m g(L_i(\x)) =
\sum_{\gamma_1,\ldots,\gamma_m \in \F_p^k} \widehat{\Gamma}(\gamma_1) \cdot \ldots \cdot \widehat{\Gamma}(\gamma_m) \exp\left(\sum_{i \in [k], j \in [m]} \gamma_{j}(i) \cdot Q_{i}(L_j(\x))\right).
$$

By Lemma~\ref{lem:LinearFormsBias} (under condition $(i)$) or Lemma~\ref{lem:LinearFormsBias-nonhomogeneous} (under condition $(ii)$) we know that for every $\gamma_1,\ldots,\gamma_m \in \F_p^k$ each one of
the polynomials $\sum_{i \in [k], j \in [m]}\gamma_{j}(i) \cdot P_{i}(L_j(\x))$ and $\sum_{i \in [k], j \in [m]} \gamma_{j}(i) \cdot Q_{i}(L_j(\x))$  is either  zero, or has bias at most $\eps$. But Lemmas~\ref{lem:dependencyShape} and~\ref{lem:nonhomogen} show that
under each one of the Conditions (i) or (ii), since $\deg(P_i)=\deg(Q_i)$, we have that
$$\sum_{i \in [k], j \in [m]}\gamma_{j}(i) \cdot P_{i}(L_j(\x)) \equiv 0 \Longleftrightarrow \sum_{i \in [k], j \in [m]} \gamma_j(i) \cdot Q_{i}(L_j(\x)) \equiv 0.$$
Hence we have that
$$\left|\Ex_{\X} \left[\exp\left(\sum_{i \in [k], j \in [m]}\gamma_{j}(i) \cdot P_{i}(L_j(\X))\right)\right]- \Ex_{\X} \left[\exp\left(\sum_{i \in [k], j \in [m]} \gamma_j(i) \cdot Q_{i}(L_j(\X))\right)\right]\right| \le 2\eps,$$
and
$$
\left| \Ex_{\X}\left[\prod_{i=1}^m f(L_i(\X))\right] - \Ex_{\X}\left[\prod_{i=1}^m g(L_i(\X))\right] \right| \le 2 \eps \cdot \sum_{\gamma_1,\ldots,\gamma_m \in \F_p^k} |\widehat{\Gamma}(\gamma_1)| \ldots |\widehat{\Gamma}(\gamma_m)|.
$$
Since $\|\widehat{\Gamma}\|_\infty \le 1$, we conclude
$$
\left| \Ex_{\X}\left[\prod_{i=1}^m f(L_i(\X))\right] - \Ex_{\X}\left[\prod_{i=1}^m g(L_i(\X))\right] \right| \le 2 \eps p^{mk}.
$$
\end{proof}

\subsection{Proofs of Lemmas~\ref{lem:LinearFormsBias},~\ref{lem:dependencyShape},~\ref{lem:nonhomogen}~and~\ref{lem:LinearFormsBias-nonhomogeneous}}\label{sec:proofofLemmas}}
We need to introduce several new notations in this section. We shall try to provide various examples to illustrate these concepts.

Let $P(x)$ be a homogeneous polynomial of degree $d<p$. Let $B(x_1,\ldots,x_d)$ be the symmetric multi-linear form associated with $P$; that is $P(x)=B(x,\ldots,x)$ and $B(x_1,\ldots,x_d) \equiv B(x_{\sigma_1},\ldots,x_{\sigma_d})$, for every permutation $\sigma$ of $[d]$.

\begin{example}
Consider a prime $p \ge 5$ and the polynomial $P(x):=6x(1)x(2)^2 \in \mathbb{F}_p[x(1),\ldots,x(n)]$.  Then $$B(x_1,x_2,x_3):=x_1(1)x_2(2)^2+x_1(2)x_2(1)^2 +x_1(1)x_3(2)^2+x_1(2)x_3(1)^2+x_2(1)x_3(2)^2+x_2(2)x_3(1)^2$$ is the symmetric  multi-linear form associated with $P$.
\end{example}

If $L(\x)=\sum_{i=1}^k c_i x_i$ is a linear form in $k$ variables, then we have
$$
P(L(\x)) = \sum_{i_1,\ldots,i_d \in [k]} c_{i_1} \ldots c_{i_d} B(x_{i_1},\ldots,x_{i_d}).
$$
For $\uu=(u_1,\ldots,u_d) \in [k]^d$, denote $\x_{\uu} = (x_{u_1},\ldots,x_{u_d})$. Let $U^d \subseteq [k]^d$ be defined as $U^d = \{(u_1,\ldots,u_d) \in [k]^d: u_1 \le u_2 \le \ldots \le u_d\}$. For $\uu \in U^d$, denote by $\ell_d(\uu)$ the number of distinct permutations\footnote{If the multiplicities of the elements of a multi-set are $i_1,\ldots,i_{\ell}$, then the number of distinct permutations of those elements is $\frac{(i_1+\ldots+i_{\ell})!}{i_1!\ldots i_{\ell}!}$.} of $(u_1,\ldots,u_d)$, and let $c_d(\uu,L):=c_{u_1}\ldots c_{u_d}$. Since $B$ is symmetric,  we have
\begin{equation}
\label{eq:PolOfLinear}
P(L(\x)) = \sum_{\uu \in U^d}  \ell_d(\uu) c_d(\uu,L) B(\x_{\uu}).
\end{equation}
Note that $\ell_d(\uu)$ depends only on $\uu$ and $c_d(\uu,L)$ depends only on the linear form $L$ and $\uu \in [k]^d$.

\begin{example}
\label{ex:notation}
Suppose that $p \ge 5$ is a prime and $P(x)$ is a homogenous polynomial of degree $3$. Let $B(x_1,x_2,x_3)$ be the symmetric  multi-linear form associated with $P$. Let $L(\x)=x_1+j x_2$ be a linear form in two variables where $j \in \mathbb{F}_p$ is a constant. In this case (\ref{eq:PolOfLinear}) becomes
\begin{eqnarray*}
P(x_1+j x_2) &=& B(x_1+jx_2,x_1+jx_2,x_1+jx_2) \\ &=& B(x_1,x_1,x_1) + j B(x_1,x_1,x_2)+ \ldots + j^3 B(x_2,x_2,x_2)\\
&=& B(x_1,x_1,x_1) + 3j B(x_1,x_1,x_2) + 3j^2 B(x_1,x_2,x_2)  +j^3 B(x_2,x_2,x_2).
\end{eqnarray*}
Note that for example the coefficient of $B(x_1,x_2,x_2)$ is $3j^2$ which is consistent with (\ref{eq:PolOfLinear}) as $\ell_3(1,2,2)=\frac{3!}{1! 2!}=3$ and $c_3((1,2,2),L)=1 \times j \times j=j^2$.
\end{example}

We need the following claim.

\begin{claim}
\label{claim:coeffs}
Let $B$ be a homogeneous multi-linear form over $\F_p$ of degree $d<p$. Consider a linear combination
$$
Q(\x) = \sum_{\uu \in U^d} c_{\uu} B(\x_{\uu})
$$
where not all the coefficients $c_{\uu}$ are zero. Then there exist $a_1,\ldots,a_k \in \F_p$ and $\alpha \in \mathbb{F}_p \setminus \{0\}$ such that for every $w \in \F_p^n$
$$
Q(a_1 w,\ldots, a_k w) = \alpha B(w,\ldots,w).
$$
\end{claim}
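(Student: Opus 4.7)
The plan is to substitute $x_i = a_i w$ into the expression for $Q$ and exploit multilinearity to pull all the $a_i$'s outside $B$. Concretely, for any $\uu=(u_1,\ldots,u_d)\in U^d$, the multilinearity of $B$ gives
\[
B(a_{u_1}w,\ldots,a_{u_d}w) \;=\; a_{u_1}\cdots a_{u_d}\cdot B(w,\ldots,w),
\]
so
\[
Q(a_1w,\ldots,a_kw) \;=\; R(a_1,\ldots,a_k)\cdot B(w,\ldots,w), \qquad \text{where } R(a_1,\ldots,a_k) := \sum_{\uu\in U^d} c_{\uu}\,a_{u_1}\cdots a_{u_d}.
\]
So the claim reduces to producing $(a_1,\ldots,a_k)\in\F_p^k$ at which the polynomial $R$ does not vanish; the value $\alpha := R(a_1,\ldots,a_k)$ will then be the desired nonzero constant.

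Next I would check that $R$ is nonzero as a formal polynomial in $\F_p[a_1,\ldots,a_k]$. The elements of $U^d$ are weakly increasing tuples, hence parametrize multisets of size $d$ from $[k]$, and the map $\uu \mapsto a_{u_1}\cdots a_{u_d}$ sends distinct elements of $U^d$ to distinct monomials of total degree $d$. Thus the monomials $\{a_{u_1}\cdots a_{u_d} : \uu \in U^d\}$ are linearly independent in $\F_p[a_1,\ldots,a_k]$, and since the coefficients $c_{\uu}$ are not all zero (this is where the hypothesis on $Q$ is used; note that we do not need any additional cancellation from $B$ being symmetric), $R$ is a nonzero polynomial.

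Finally, I need to conclude that $R$ is nonzero as a \emph{function} on $\F_p^k$, not merely as a polynomial. Every monomial appearing in $R$ has total degree exactly $d < p$, so each individual variable appears with exponent at most $d < p$. The standard fact that polynomials with every individual degree strictly less than $p$ inject into functions $\F_p^k \to \F_p$ (equivalently, Schwartz--Zippel applied to a polynomial of total degree $d<p$ gives a nonvanishing point) guarantees $(a_1,\ldots,a_k)\in \F_p^k$ with $R(a_1,\ldots,a_k)\ne 0$.

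The proof is essentially routine once the substitution $x_i = a_i w$ is made; the only place the hypothesis $d < p$ is used is in the very last step, where it is essential that no reduction $a_i^p = a_i$ is needed in order to conclude that a nonzero polynomial defines a nonzero function. I do not foresee any serious obstacle.
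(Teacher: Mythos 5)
Your proof is correct and follows essentially the same route as the paper: substitute $x_i=a_iw$, use multilinearity to factor out $B(w,\ldots,w)$, observe that distinct $\uu\in U^d$ give distinct monomials so the coefficient polynomial is nonzero, and pick a nonvanishing point. Your explicit justification of the last step via the hypothesis $d<p$ (individual degrees below $p$, so a nonzero polynomial is a nonzero function on $\F_p^k$) is a detail the paper leaves implicit, and is a welcome addition.
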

\begin{proof}
Consider $\x = (a_1 w,\ldots,a_k w)$. As $B$ is multi-linear,  we have $$B(\x_{\uu}) = \left( \prod_{i=1}^d a_{u_i} \right) \cdot B(w,\ldots,w),$$ for every $\uu \in U^d$. Let $a=(a_1,\ldots,a_k)$ and let $a^{\uu}$ denote the monomial $a^{\uu} = \prod_{i=1}^d a_{u_i}$. We thus have
$$
Q(a_1 w,\ldots,a_k w) = \left( \sum_{\uu \in U^d} c_{\uu} a^{\uu} \right) \cdot B(w,\ldots,w).
$$
Consider $g(a_1,\ldots,a_k)=\sum_{\uu \in U^d} c_{\uu} a^{\uu}$. This is a polynomial in $a_1,\ldots,a_k$ which is not identically
zero, as distinct $\uu \in U^d$ correspond to distinct monomials $a^{\uu}$. Hence there exists some assignment for $a$ for which $\alpha:=g(a) \ne 0$.
\end{proof}

Consider a set of linearly independent homogeneous polynomials $\{P_1,\ldots,P_k\}$, a system of linear forms $\{L_1,\ldots,L_m\}$ and some coefficients $\lambda_{i,j} \in \F_p$ where $i \in [m], j \in  [k]$. Let $B_{i}$ be the symmetric multi-linear form associated with $P_{i}$. Denoting $d_i:=\deg(P_i)$ and using the notation of (\ref{eq:PolOfLinear}), we thus have
\begin{eqnarray}
\label{eq:PlambdaExp}
\nonumber
P_{\Lambda}(\x) &=& \sum_{i=1}^k \sum_{j=1}^m \lambda_{i,j} P_i(L_j(\x))= \sum_{i=1}^k \sum_{j=1}^m \lambda_{i,j} \sum_{\uu \in U^{d_i}} \ell_{d_i}(\uu) c_{d_i}(\uu,L_j) B_i(\x_{\uu}) \\
&=& \sum_{i=1}^k  \sum_{\uu \in U^{d_i}} b^{d_i}_i(\uu) B_{i}(\x_{\uu}),
\end{eqnarray}
where
$$b_i^{d_i}(\uu):= \ell_{d_i}(\uu) \sum_{j=1}^m  \lambda_{i,j}  c_{d_i}(\uu,L_j).$$
Note that the coefficients $b_i^{d_i}(\uu)$ do not depend on the specific set of polynomials $P_1,\ldots,P_k$.

\begin{example}
Consider homogenous polynomials $P_1(x)$ of degree $1$ and $P_2(x)$ of degree $3$, and linear forms $L_j = x_1 + j x_2$ for $j=1,2,3,4$. Let $B_1(x_1)$ and $B_2(x_1,x_2,x_3)$ be the symmetric  multi-linear form associated with $P_1$ and $P_2$ respectively.
We have
$$P_1(x_1+j x_2) = B_1(x_1) + j B_1(x_2),$$
and as we saw in Example~\ref{ex:notation},
$$P_2(x_1+j x_2) = B_2(x_1,x_1,x_1) + 3j B_2(x_1,x_1,x_2)+3j^2 B_2(x_1,x_2,x_2) + j^3 B_2(x_2,x_2,x_2).$$
Now consider some coefficients $\lambda_{i,j} \in \mathbb{F}_p$ for $i \in [2]$ and $j \in [4]$. Then
\begin{eqnarray*}
P_{\Lambda}(\x) &=& \sum_{i=1}^2 \sum_{j=1}^4 \lambda_{i,j} P_i(L_j(\x)) = \left( \sum_{j=1}^4 \lambda_{1,j} \left(B_1(x_1) + j B_1(x_2) \right) \right)+
\\ && \left(\sum_{j=1}^4 \lambda_{2,j} \left( B_2(x_1,x_1,x_1) + 3j B_2(x_1,x_1,x_2)+3j^2 B_2(x_1,x_2,x_2) + j^3 B_2(x_2,x_2,x_2) \right)\right).
\end{eqnarray*}
Let us investigate the coefficient of the particular term $B_2(x_1,x_2,x_2)$ in this expression.
This coefficient is equal to $$\sum_{j=1}^4 \lambda_{2,j}  3 j^2 = 3 \sum_{j=1}^4 \lambda_{2,j} j^2 = \ell_{3}(1,2,2) \sum_{j=1}^4 \lambda_{2,j} c_{3}((1,2,2),L_j) = b_2^{3}(1,2,2),$$
as in (\ref{eq:PlambdaExp}).
\end{example}

Among Lemmas~\ref{lem:LinearFormsBias},~\ref{lem:dependencyShape},~\ref{lem:nonhomogen}~and~\ref{lem:LinearFormsBias-nonhomogeneous} first we prove Lemma~\ref{lem:dependencyShape}, which has the simplest proof.

\begin{proof}[Proof of Lemma~\ref{lem:dependencyShape}]
We will show that $P_{\Lambda}(\x) \equiv 0$ if and only if $b_i^{d_i}(\uu) = 0$, for all $i \in [k]$ and $\uu \in U^{d_i}$.
Let $B_{i_0}$ be a polynomial appearing in $P_{\Lambda}$ with a nonzero coefficient, that is  $b_{i_0}^{d_{i_0}}(\uu) \neq 0$ for some $\uu \in U^{d_{i_0}}$ in (\ref{eq:PlambdaExp}). Consider any assignment of the form $\x = (a_1 w, \ldots, a_k w)$. We have
that
$$
B_{i}(\x_{\uu}) = \a^{\uu} B_{i}(w,\ldots,w) = \a^{\uu} P_{i}(w).
$$
Hence we get that
\begin{equation}
\label{eq:eqqq}
P_{\Lambda}(a_1 w, \ldots, a_k w) = \sum_{i=1}^{k}  \alpha_{i}  P_{i}(w),
\end{equation}
where $\alpha_i = \sum_{\uu \in U^{d_i}} \a^{\uu} b_{i}^{d_i}(\uu)$. Applying Claim~\ref{claim:coeffs}, there exists a choice of $a_1,\ldots,a_k$, such that $\alpha_{i_0} \neq 0$, and then the linear independence of $\{P_1,\ldots,P_k\}$  shows that $P_\Lambda \not \equiv 0$.
\end{proof}

The proof of Lemma~\ref{lem:dependencyShape} shows that $P_{\Lambda}(\x) \not\equiv 0$ if and only if $b_i^{d_i}(\uu) \neq 0$ for some $i$ and $\uu$.
Next we prove Lemma~\ref{lem:LinearFormsBias} where we show that in this case under the stronger condition of high rank $\bias(P_\Lambda)$ is small.

\begin{proof}[Proof of Lemma~\ref{lem:LinearFormsBias}]
Suppose that  $P_{\Lambda}(\x) \not\equiv 0$ so that $b_i^{d_i}(\uu) \neq 0$, for some $i \in [k]$ and $\uu \in U^{d_i}$.  Let $d_{i_0} \le d$ be the largest degree such that $b_{i_0}^{d_{i_0}}(\uu) \neq 0$, for some $\uu \in U^{d_{i_0}}$.

Assume for contradiction that $\bias(P_{\Lambda}) \ge \eps$. By the regularity theorem for polynomials (Theorem~\ref{thm:regularity}) we get that $P_{\Lambda}(\x)$ can be expressed as a function of $r \le r_{p,d_{i_0}}(\eps) \le r_{p,d}(\eps)$ polynomials of degree at most $d_{i_0}-1$. We will show that this implies  $\rank(\P) \le r$. We know that
$P_{\Lambda}(\x)$ can be expressed as a function of at most $r$ polynomials of degree at most $d_{i_0}-1$. This
continues to hold under any assignment $\x = (a_1 w, \ldots, a_k w)$. That is
$$P_{\Lambda}(a_1 w, \ldots, a_k w) = \sum_{i=1}^{k}  \alpha_{i}  P_{i}(w), $$
is a function of at most $r$ polynomials of degree at most $d_{i_0}-1$, where $\alpha_i = \sum_{\uu \in U^{d_i}} \a^{\uu} b_{i}^{d_i}(\uu)$ .
By Claim~\ref{claim:coeffs} there exists a choice of $a_1,\ldots,a_k$ such that $\alpha_{i_0} \neq 0$, and this shows that $\rank(\P) \le r$.
\end{proof}

Next we prove Lemma~\ref{lem:nonhomogen} where we deal with the case that the polynomials are not necessarily homogeneous, but instead the system of linear forms $\{L_1,\ldots,L_m\}$ is homogeneous.

\begin{proof}[Proof of Lemma~\ref{lem:nonhomogen}]
It follows from Observation~\ref{obs:canonical} that by a change of variables we can assume that $L_1,\ldots,L_m$ are linear forms over $\F_p^n$ in $s$ variables $x_1,\ldots,x_s$, and  $x_1$ appears with coefficient $1$ in all linear forms. For $1\le i \le k$, let $B_{i}^1,\ldots,B_i^{d_i}$  be the symmetric multilinear forms associated with $P_{i}^{(1)},\ldots,P_i^{(d_i)}$, respectively. Then (\ref{eq:PlambdaExp}) must be replaced by
\begin{eqnarray}
\nonumber
P_{\Lambda}(\x) &=& \sum_{i=1}^k \sum_{j=1}^m  \lambda_{i,j} P_i(L_j(\x))=  \sum_{i=1}^k \sum_{j=1}^m \sum_{l=1}^{d_i} \lambda_{i,j} \sum_{\uu \in U^l} \ell_l(\uu) c_l(\uu,L_j) B_i^l(\x_{\uu}) \\  &=& \sum_{i=1}^k  \sum_{l=1}^{d_i} \sum_{\uu \in U^l} b_i^l(\uu) B^l_i(\x_{\uu}),
\label{eq:PlambdaExp2}
\end{eqnarray}
where
$$b_i^l(\uu):= \ell_l(\uu) \sum_{j=1}^m  \lambda_{i,j}  c_l(\uu,L_j).$$
Suppose that $P_{\Lambda}(\x) \not \equiv 0$. We claim that in this case there exists an $i \in [k]$ and $\uu \in U^{d_i}$ such that $b_i^{d_i}(\uu) \neq 0$, and this establishes the lemma. In order to prove this claim it suffices to show that if $b_i^{d_i}(\uu)=0$, for every $\uu \in  U^{d_i}$, then $b_i^t(\uu)=0$ for every $0 \le t \le d_i$ and $\uu \in U^t$. Let otherwise $t$ be the largest integer such that $b_i^t(\uu)= \ell_t(\uu) \sum_{j=1}^m  \lambda_{i,j}  c_t(\uu,L_j) \neq 0$, for some $\uu = (u_1,\ldots,u_t) \in U^t$. Consider $\uu'=(1,u_1,\ldots,u_t) \in U^{t+1}$. Since $x_1$ appears with coefficient $1$ in every $L_j$, we have that $c_{t+1}(\uu',L_j) = c_t(\uu,L_j)$. Also note that since $d_i < p$, and $p$ is a prime, $\ell_l(\uu) \neq 0$ for every $1 \le l \le d_i$ and $\uu \in U^l$. Hence we conclude that
$$b_i^{t+1}(\uu')= \ell_{t+1}(\uu') \sum_{j=1}^m  \lambda_{i,j} c_t(\uu,L_j) = \frac{\ell_{t+1}(\uu')}{\ell_{t}(\uu)} b_i^t(\uu) \neq 0,$$
which contradicts the maximality of $t$.
\end{proof}

We conclude with the proof of Lemma~\ref{lem:LinearFormsBias-nonhomogeneous}.

\begin{proof}[Proof of Lemma~\ref{lem:LinearFormsBias-nonhomogeneous}]
The proof is nearly identical to the proof of Lemma~\ref{lem:LinearFormsBias}. Let $P_1,\ldots,P_k$ be polynomials of degrees
$d_1,\ldots,d_k$. Decompose each polynomial to homogeneous parts $P_i^{(\ell)}$ for $1 \le \ell \le d_i$, and let $B_i^{\ell}$ denote the corresponding multi-linear polynomial. As in the proof of Lemma~\ref{lem:LinearFormsBias}, let $\ell$ be maximal such that $b_i^{\ell} \ne 0$ for some $i \in [k]$. As the system of linear forms is homogeneous we have by Lemma~\ref{lem:nonhomogen}
that also $b_i^{d_i} \ne 0$, hence $\ell=d_i$. The proof now continues exactly as in Lemma~\ref{lem:LinearFormsBias}.
\end{proof}

\section{Proof of the main theorem}
\label{sec:proof_main}

In this section we prove  Theorem~\ref{thm:strong_independence}. For the convenience of the reader, we restate the theorem.

\restate{Theorem~\ref{thm:strong_independence}}{
Let $\mathcal{L}=\{L_1,\ldots,L_m\}$ be a system of linear forms  of Cauchy-Schwarz complexity at most $p$. Let $d \ge 0$, and assume that $L_1^{d+1}$ is not in the linear span of $L_2^{d+1},\ldots,L_m^{d+1}$. Then for every $\eps>0$, there exists $\delta>0$ such that for any functions $f_1,\ldots,f_m:\F_p^n \to \D$ with $\|f_1\|_{U^{d+1}} \le \delta$, we have
$$
\left| \Ex_{\X \in (\F_p^n)^k} \left[\prod_{i=1}^m f_i(L_i(\X)) \right] \right| \le \eps,
$$
where $\X \in (\F_p^n)^k$ is uniform.
}

If $d \ge s$, then Theorem~\ref{thm:strong_independence} follows from Lemma~\ref{lemma:cauchy-schwarz-lemma}. Thus we assume $d<s$, hence also $d<p$. We will assume throughout the proof that $p,m,s,d,\eps$ are constants, and we will not explicitly state dependencies on them.

Let $r:\N \to \N$ be a growth function to be specified later. Let $\eta>0$ be a sufficiently small constant which will be specified later. By Lemma~\ref{lemma:decompose_multiple_func} there exists a polynomial factor $\mathcal{B}$ of degree $s$, complexity $C \le C_{\max}(\eta,r(\cdot))$ and rank at least $r(C)$ such that we can decompose each function $f_i$ as
$$
f_i = h_i + h'_i
$$
where $h_{i}=\E(f_i|\mathcal{B})$ and $\|h'_i\|_{U^{s+1}} \le \eta$. Trivially $\|h_{i}\|_{\infty} \le 1$ and $\|h'_{i}\|_{\infty} \le 2$. We first show that in order to bound $\Ex \left[\prod_{i=1}^m f_i(L_i(\X))\right]$ it suffices to bound $\Ex\left[ \prod_{i=1}^m h_i(L_i(\X))\right]$ if $\eta$ is chosen to be small enough.

\begin{claim}
If $\eta \le
\frac{\eps}{2m}$ then
$$
\left| \Ex\left[ \prod_{i=1}^m f_i(L_i(\X))\right] - \Ex \left[\prod_{i=1}^m h_{i}(L_i(\X))\right] \right| \le \eps/2,
$$
where the averages are over uniform $\X \in (\F_p^n)^k$.
\end{claim}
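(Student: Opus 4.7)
The natural approach is a telescoping argument combined with the Cauchy-Schwarz complexity bound (Lemma~\ref{lemma:cauchy-schwarz-lemma}), where $s$ denotes the Cauchy-Schwarz complexity of $\mathcal{L}$ (with $s \le p$, so that Lemma~\ref{lemma:cauchy-schwarz-lemma} is applicable with the $U^{s+1}$ norm).

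First I would write the telescoping identity
\[
\prod_{i=1}^m f_i(L_i(\X)) - \prod_{i=1}^m h_i(L_i(\X)) = \sum_{j=1}^m \left(\prod_{i<j} h_i(L_i(\X))\right) h'_j(L_j(\X)) \left(\prod_{i>j} f_i(L_i(\X))\right),
\]
using $f_j - h_j = h'_j$. Taking expectations and applying the triangle inequality reduces the task to bounding, for each $j \in [m]$, the magnitude of
\[
A_j := \Ex_{\X}\left[ \left(\prod_{i<j} h_i(L_i(\X))\right) h'_j(L_j(\X)) \left(\prod_{i>j} f_i(L_i(\X))\right)\right].
\]

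Next I would verify the boundedness needed to invoke Lemma~\ref{lemma:cauchy-schwarz-lemma}. Since $f_i : \F_p^n \to \D$, we have $|f_i| \le 1$, and because $h_i = \E(f_i \mid \mathcal{B})$ is a conditional average of $f_i$ over an atom of $\mathcal{B}$, also $|h_i| \le 1$. Thus every factor in the product defining $A_j$, except possibly the $j$-th, is in $\D$. For $h'_j$ we only have $\|h'_j\|_\infty \le 2$, so I would rewrite $h'_j = 2 \cdot (h'_j/2)$, pull the factor of $2$ outside, and apply Lemma~\ref{lemma:cauchy-schwarz-lemma} to the resulting system of $\D$-valued functions. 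Choosing the minimum on the $j$-th slot gives
\[
|A_j| \le 2 \cdot \left\|\tfrac{1}{2} h'_j \right\|_{U^{s+1}} = \|h'_j\|_{U^{s+1}} \le \eta.
\]

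Finally, summing over $j$ yields
\[
\left| \Ex\left[ \prod_{i=1}^m f_i(L_i(\X))\right] - \Ex \left[\prod_{i=1}^m h_i(L_i(\X))\right] \right| \le \sum_{j=1}^m |A_j| \le m\eta \le \eps/2,
\]
using the assumption $\eta \le \eps/(2m)$. The argument is essentially routine once the telescoping is set up; the only mild care required is handling the factor of $2$ coming from $\|h'_j\|_\infty \le 2$ so that Lemma~\ref{lemma:cauchy-schwarz-lemma}, which requires $\D$-valued functions, can be applied. No obstacle beyond that; this is exactly the kind of hybrid step for which the Cauchy-Schwarz complexity bound was designed.
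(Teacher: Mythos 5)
Your proof is correct and follows essentially the same route as the paper's: the identical telescoping decomposition followed by an application of Lemma~\ref{lemma:cauchy-schwarz-lemma} to each hybrid term, then summing the $m$ bounds of $\eta$ each. Your explicit rescaling of $h'_j$ by a factor of $2$ (to meet the $\D$-valuedness hypothesis of that lemma, since only $\|h'_j\|_\infty \le 2$ is guaranteed) is a small point of care that the paper's own proof silently elides.
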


\begin{proof}
We have
$$
\prod_{i=1}^m f_i(L_i(\X)) - \prod_{i=1}^m h_{i}(L_i(\X)) =
\sum_{i=1}^m \left(\prod_{j=1}^{i-1} h_j (L_j(\X)) \cdot (f_i-h_i)(L_i(\X)) \cdot \prod_{j=i+1}^m f_j (L_j(\X))\right) .
$$
Fix $i \in [m]$. Since the Cauchy-Schwarz complexity of $\{L_1,\ldots,L_m\}$ is $s$, we have by Lemma~\ref{lemma:cauchy-schwarz-lemma} that
$$
\left|\Ex\left[\prod_{j=1}^{i-1} h_j (L_j(\X)) \cdot (f_i-h_i)(L_i(\X)) \cdot \prod_{j=i+1}^m f_j (L_j(\X))\right]\right| \le
\|f_i - h_i\|_{U^{s+1}} \le \eta.
$$
\end{proof}

%

%
We thus set $\eta=\frac{\eps}{2m}$, and regard $\eta$ from now on as a constant, and we do not specify explicitly dependencies on $\eta$ as well.

Let $\{P_i\}_{1 \le i \le C}$ be the polynomials which define the polynomial factor $\mathcal{B}$, where we assume each $P_i$ is homogeneous of degree $\deg(P_i) \le s$. Since each $h_{i}$ is measurable with regards to $\mathcal{B}$, we have $h_{i}(x)=\Gamma_i(P_1(x),\ldots,P_C(x))$ where $\Gamma_i:\F_p^C \to \D$ is some function. Decompose $\Gamma_i$ to its Fourier decomposition as
$$
\Gamma_i(z(1),\ldots,z(C)) = \sum_{\gamma \in \F_p^C} c_{i,\gamma} \cdot \exp\left(\sum_{j=1}^C \gamma(j) z(j)\right),
$$
where $|c_{i,\gamma}| \le 1$. Define for $\gamma \in \F_p^C$, the linear combination
$P_{\gamma}(x) = \sum_{j=1}^C \gamma(j) P_j(x)$. We can express each $h_{i}$ as
$$
h_{i}(x) = \sum_{\gamma \in \F_p^C} c_{i,\gamma} \cdot \exp\left(P_{\gamma}(x)\right),
$$
and we can express
\begin{equation}\label{eq:strong_independence:sum}
\Ex \left[ \prod_{i=1}^m h_{i}(L_i(\X))\right] = \sum_{\gamma_1,\ldots,\gamma_m  \in \F_p^C} \Delta(\gamma_1,\ldots,\gamma_m),
\end{equation}
where
\begin{equation}\label{eq:strong_independence:expression}
\Delta(\gamma_1,\ldots,\gamma_m) = \left(\prod_{i=1}^m c_{i,\gamma_i} \right)
\Ex \left[\exp\left(P_{\gamma_1}(L_1(\X))+\ldots+P_{\gamma_m}(L_m(\X))\right)\right].
\end{equation}
We will bound each term $\Delta(\gamma_1,\ldots,\gamma_m)$ by $\tau:=\tau(C)=p^{-mC} \eps/2$, which will establish the result. Let $S=\{\gamma \in \F_p^C: \deg(P_{\gamma}) \le d\}$. We first bound the terms $\Delta(\gamma_1,\ldots,\gamma_m)$ with $\gamma_1 \in S$.

\begin{claim}
\label{claim:bound_Delta_gamma1_in_S}
If the growth function $r(\cdot)$ is chosen large enough, and if $\delta>0$ is chosen small enough, then for all $\gamma_1 \in S$ we have
$$
|c_{1,\gamma_1}| \le \tau.
$$
Consequently, for all $\gamma_1 \in S$ and $\gamma_2,\ldots,\gamma_m \in \F_p^C$ we have
$$
|\Delta(\gamma_1,\ldots,\gamma_m)| \le \tau.
$$
\end{claim}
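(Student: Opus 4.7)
The plan is to extract $c_{1,\gamma_1}$ as essentially the inner product of $h_1$ with the phase polynomial $\exp(P_{\gamma_1})$, then transfer this inner product from $h_1$ to $f_1$ using the $\mathcal{B}$-measurability of $\exp(P_{\gamma_1})$, and finally bound it by $\|f_1\|_{U^{d+1}}$ via the direct part of Theorem~\ref{thm:inverse}.

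Concretely, I first expand
\[
\ip{h_1, \exp(P_{\gamma_1})} = \sum_{\gamma \in \F_p^C} c_{1,\gamma} \, \Ex_{X \in \F_p^n}\!\left[\exp\!\big(P_{\gamma - \gamma_1}(X)\big)\right].
\]
The term $\gamma = \gamma_1$ contributes exactly $c_{1,\gamma_1}$ because $P_0 \equiv 0$. For each $\gamma \ne \gamma_1$, the vector $\gamma - \gamma_1 \in \F_p^C$ is nonzero, so $P_{\gamma - \gamma_1}$ is a nontrivial linear combination of the high-rank polynomials defining $\mathcal{B}$. If the growth function $r(\cdot)$ is chosen sufficiently fast-growing in Lemma~\ref{lemma:decompose_multiple_func}, then the contrapositive of Theorem~\ref{thm:regularity}, in the form of the approximate orthogonality~\eqref{eq:approxOrth}, yields $|\Ex_X \exp(P_{\gamma-\gamma_1}(X))| \le 1/\omega(C)$ for any prescribed $\omega$. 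Since $|c_{1,\gamma}| \le \|\Gamma_1\|_\infty \le 1$ for every $\gamma$, the total off-diagonal contribution is at most $p^C/\omega(C)$, which is $\le \tau/2 = p^{-mC}\eps/4$ once we take $\omega(C) \ge 4p^{(m+1)C}/\eps$.

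Next, since $\exp(P_{\gamma_1})$ is $\mathcal{B}$-measurable and $h_1 = \E(f_1|\mathcal{B})$, Observation~\ref{obs:project_factor} gives
\[
\ip{h_1, \exp(P_{\gamma_1})} = \ip{f_1, \exp(P_{\gamma_1})}.
\]
For $\gamma_1 \in S$ we have $\deg(P_{\gamma_1}) \le d$, so $\exp(P_{\gamma_1}) \in \mathcal{P}_d(\F_p^n)$. The direct inequality~\eqref{eq:GowersCorrelationII} therefore yields
\[
\left| \ip{f_1, \exp(P_{\gamma_1})} \right| \le \|f_1\|_{U^{d+1}} \le \delta.
\]
Combining the two bounds,
\[
|c_{1,\gamma_1}| \le \delta + p^C/\omega(C) \le \delta + \tau/2.
\]

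The parameters are then set in the standard order that avoids circularity: fix $r(\cdot)$ first (dominating $C \mapsto 4p^{(m+1)C}/\eps$), which bounds $C$ uniformly by some constant $C_0 := C_{\max}(p,d,\eta,m,r(\cdot))$; then choose $\delta \le p^{-mC_0}\eps/4 \le \tau(C)/2$ for every realized $C \le C_0$. This gives $|c_{1,\gamma_1}| \le \tau$ as required. The second inequality of the claim is immediate from~\eqref{eq:strong_independence:expression}: each $|c_{i,\gamma_i}| \le 1$ and the exponential average has modulus at most $1$, so
\[
|\Delta(\gamma_1,\ldots,\gamma_m)| \le |c_{1,\gamma_1}| \le \tau.
\]
The only real subtlety is this parameter juggling between $r(\cdot)$, $C$, and $\delta$; once it is set up correctly the analytic content reduces to one application of approximate orthogonality plus the direct Gowers inequality.
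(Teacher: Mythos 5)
Your proof is correct and follows essentially the same route as the paper: the same identity isolating $c_{1,\gamma_1}$ from the inner product $\ip{h_1,\exp(P_{\gamma_1})}$, the same use of Observation~\ref{obs:project_factor} and the direct Gowers inequality to bound the diagonal term by $\delta$, and the same high-rank/regularity argument (the paper invokes Theorem~\ref{thm:regularity} directly where you cite its consequence~\eqref{eq:approxOrth}) for the off-diagonal terms, with the same non-circular ordering of $r(\cdot)$, $C$, and $\delta$.
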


\begin{proof}
The bound on $|\Delta(\gamma_1,\ldots,\gamma_m)|$ follows trivially from the bound on $|c_{1,\gamma_1}|$, since
$|c_{2,\gamma_2}|,\ldots,|c_{m,\gamma_m}| \le 1$. To bound $|c_{1,\gamma_1}|$, note that
$$
c_{1,\gamma_1} = \Ex \left[ h_{1}(X) \exp(-P_{\gamma_1}(X))\right] - \sum_{\gamma' \in \F_p^C, \gamma' \ne \gamma_1} c_{1,\gamma'} \Ex \left[\exp(P_{\gamma'}(X) - P_{\gamma_1}(X))\right],
$$
where the averages are over uniform $X \in \F_p^n$. We first bound $\Ex \left[h_1(X) \exp(-P_{\gamma_1}(X))\right]$. Using the fact that $h_1=\E(f_1|\mathcal{B})$ and that the function $\exp(-P_{\gamma_1}(x))$ is $\mathcal{B}$-measurable, we have by Observation~\ref{obs:project_factor} that
$$
|\Ex \left[ h_{1}(X) \exp(-P_{\gamma_1}(X))\right]|=|\Ex \left[f_1(X) \exp(-P_{\gamma_1}(X))\right]| \le \|f_1\|_{U^{d+1}} \le \delta.
$$
Hence, by choosing $\delta<p^{-m  C_{\max}(r(\cdot))} \eps/4$ we guarantee that $|\Ex \left[ h_{1}(X) \exp(-P_{\gamma_1}(X))\right]|\le \delta<\tau/2$. Next for $\gamma' \neq \gamma_1$, we bound each term $\Ex\left[\exp(P_{\gamma'}(X) - P_{\gamma_1}(X))\right]$ by $\tau p^{-C}/2$. Assume that for some $\gamma' \ne \gamma_1$ we have $$
\left|\Ex \left[\exp(P_{\gamma'}(X) - P_{\gamma_1}(X))\right]\right| > \tau p^{-C}/2.
$$
Then by Theorem~\ref{thm:regularity} we have that
$$
\rank(P_{\gamma'} - P_{\gamma_1}) \le r_{p,s}(\tau p^{-C}/2)=r_1(C).
$$
Thus, as long as we choose $r(C) > r_1(C)$ for all $C \in \N$, we have that
$$
\sum_{\gamma' \ne \gamma_1} \left|c_{1,\gamma'} \Ex \left[\exp(P_{\gamma'}(X) - P_{\gamma_1}(X))\right]\right| \le \tau/2
$$
and we achieve the bound $|c_{1,\gamma_1}| \le \tau$.
\end{proof}

Consider now any $\gamma_1 \notin S$. We will show that if we choose $r(\cdot)$ large enough we can guarantee that
\begin{equation}
\label{eq:strong_independence:bias_linear_forms}
|\Ex \left[ \exp\left(P_{\gamma_1}(L_1(\X))+\ldots+P_{\gamma_m}(L_m(\X))\right)\right] | \le \tau,
\end{equation}
which will establish the result. Assume that this is not the case for some $\gamma_1 \notin S$ and $\gamma_2,\ldots,\gamma_m \in \F_p^C$. By Lemma~\ref{lem:LinearFormsBias} there
is a rank $r_{p,s}(\tau)=r_2(C)$ such that if we guarantee that $r(C)>r_2(C)$ for all $C \in \N$ and if~\eqref{eq:strong_independence:bias_linear_forms} does not hold, then we must have
\begin{equation}
\label{eq:strong_independence:equality_linear_forms}
P_{\gamma_1}(L_1(\x))+\ldots+P_{\gamma_m}(L_m(\x)) \equiv 0.
\end{equation}
Let $t=\deg(P_{\gamma_1})>d$. Let $P^{(t)}_{\gamma}$ be the degree $t$ homogeneous part of $P_{\gamma}$. Since the degrees of the polynomials are at most $p-1$, we must have that
\begin{equation}
\label{eq:strong_independence:equality_homogeneous_linear_forms}
P^{(t)}_{\gamma_1}(L_1(\x))+\ldots+P^{(t)}_{\gamma_m}(L_m(\x)) \equiv 0.
\end{equation}
The following claim concludes the proof. It shows that if~\eqref{eq:strong_independence:equality_homogeneous_linear_forms} holds, then $L_1^t$ is linearly dependent on $L_2^t,\ldots,L_m^t$. This immediately implies that also $L_1^{d+1}$ is linearly dependent on $L_2^{d+1},\ldots,L_m^{d+1}$ (since $t \ge d+1$) which contradicts our initial assumption.

\begin{claim}
Let $P_1,\ldots,P_m$ be homogeneous polynomials of degree $t<p$, where $P_1$ is not identically zero, such that
$$
P_1(L_1(\x))+\ldots+P_m(L_m(\x)) \equiv 0.
$$
Then $L_1^t$ is linearly dependent on $L_2^t,\ldots,L_m^t$.
\end{claim}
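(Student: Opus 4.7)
The plan is to reduce the multivariable identity to a single-variable one by the substitution $\x = (a_1 w, \ldots, a_k w)$, where $a_1,\ldots,a_k \in \F_p$ are scalars and $w \in \F_p^n$ is a vector variable. Since each $P_i$ is homogeneous of degree $t$, we have $P_i(L_i(\x)) = P_i(L_i(\a)\, w) = L_i(\a)^t\, P_i(w)$. Because the hypothesis $\sum_i P_i(L_i(\x)) \equiv 0$ is a polynomial identity in the coordinates of $x_1,\ldots,x_k$, after this polynomial substitution it becomes the polynomial identity
$$
\sum_{i=1}^m L_i(\a)^t \, P_i(w) \;\equiv\; 0
$$
in the formal variables $a_1,\ldots,a_k$ and the coordinates of $w$.

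Next I would expand $L_i(\a)^t = \bigl(\sum_j \lambda_{i,j} a_j\bigr)^t$ by the multinomial theorem and extract, for each $\uu \in U^t$, the coefficient of the monomial $a^{\uu}$. Using the notation from Section~\ref{sec:proofofLemmas}, this coefficient equals $\ell_t(\uu)\sum_{i=1}^m c_t(\uu,L_i)\, P_i(w)$, which must vanish identically in $w$. Since $t < p$ and $p$ is prime, the multinomial coefficient $\ell_t(\uu)$ is nonzero in $\F_p$, so dividing it out yields, for every ordered $(u_1,\ldots,u_t) \in [k]^t$, the polynomial identity
$$
\sum_{i=1}^m \lambda_{i,u_1}\cdots\lambda_{i,u_t}\, P_i(w) \;\equiv\; 0.
$$
The key observation is that $\lambda_{i,u_1}\cdots\lambda_{i,u_t}$ is precisely the $(u_1,\ldots,u_t)$-coordinate of the tensor $L_i^t \in \F_p^{k^t}$.

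Finally, since $P_1 \not\equiv 0$, there is some monomial in $w$ whose coefficient in $P_1$ is nonzero; let $\phi$ be the linear functional extracting that coefficient. Setting $c_i := \phi(P_i)$, we have $c_1 \ne 0$, and applying $\phi$ to the identity above gives
$$
\sum_{i=1}^m c_i\, \lambda_{i,u_1}\cdots\lambda_{i,u_t} \;=\; 0 \qquad \text{for every } (u_1,\ldots,u_t)\in[k]^t,
$$
which is exactly the vector identity $\sum_{i=1}^m c_i\, L_i^t = 0$ in $\F_p^{k^t}$. Rearranging, $L_1^t = -c_1^{-1}\sum_{i\ge 2} c_i\, L_i^t$, so $L_1^t$ lies in the linear span of $L_2^t,\ldots,L_m^t$.

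The only delicate point is the coefficient-extraction step: it must be carried out at the level of polynomial identities rather than identities of functions, and this is where $t<p$ is used crucially, to ensure $\ell_t(\uu)\ne 0$ in $\F_p$ and to cleanly separate the contributions of the different $P_i$'s on the $w$-side from the algebraic identities among the tensors $L_i^t$ on the $\a$-side.
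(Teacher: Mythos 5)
Your proof is correct and follows essentially the same route as the paper's: both arguments specialize the vector variables to scalar multiples of a single vector, use homogeneity to reduce to the identity $\sum_i c_i L_i(\cdot)^t \equiv 0$ in scalar variables, and invoke $t<p$ so that the multinomial coefficients are nonzero and the identity forces $\sum_i c_i L_i^t = 0$. The only cosmetic difference is the order of operations — the paper first extracts a monomial of the $P_i$'s and then substitutes $x_i=(z(i),\ldots,z(i))$, whereas you substitute $\x=(a_1w,\ldots,a_kw)$ first and extract coefficients afterwards — and you correctly flag the one delicate point (working with polynomial rather than functional identities).
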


\begin{proof}
Let $M(x)=x_{i_1} \cdot \ldots \cdot x_{i_t}$ be a monomial appearing in $P_1$ with a nonzero coefficient $\alpha_1 \ne 0$. Let $\alpha_i$ be the coefficient of $M(x)$ in $P_i$ for $2 \le i \le m$. We have that
$$
\alpha_1 M(L_1(\x))+\ldots+\alpha_m M(L_m(\x)) \equiv 0.
$$
Let $\x=(x_1,\ldots,x_k)$ and $L_i(\x)=\lambda_{i,1}x_1 + \ldots + \lambda_{i,k} x_k$. We have
$$
M(L_i(\x)) = \prod_{j=1}^t (\lambda_{i,1}x_1(i_j) + \ldots + \lambda_{i,k} x_k(i_j)).
$$
Consider the assignment $x_i=(z(i),\ldots,z(i))$ where $z(1),\ldots,z(k) \in \F_p$ are new variables. We thus have the polynomial identity
$$
\sum_{i=1}^m \alpha_i (\lambda_{i,1}z(1) + \ldots + \lambda_{i,k} z(k))^t \equiv 0,
$$
which as $t<p$ is equivalent to
$$
\sum_{i=1}^m \alpha_i L_i^{t} \equiv 0.
$$
\end{proof}

\section{Summary and open problems}
\label{sec:summary}
We study the complexity of structures defined by linear forms.
Let $\mathcal{L}=\{L_1,\ldots,L_m\}$ be a system of linear forms of Cauchy-Schwarz complexity $s$. Our main technical contribution is that as long as $s \le p$ if $L_1^{d+1}$ is not linearly dependent on $L_2^{d+1},\ldots,L_m^{d+1}$,
then averages $\E[\prod_{i=1}^m f_i(L_i(\X))]$ are controlled by $\|f_1\|_{U^{d+1}}$.

When the first version of this article was submitted, Theorem~\ref{thm:inverse} in the case of $p \le d$ was still unknown. However as we mentioned in Section~\ref{sec:inverse}, very recently Tao and Ziegler~\cite{TaoZiegler2010} established this case. With the set of techniques of the present paper and~\cite{TaoZiegler2010}, it seems plausible that Theorem~\ref{thm:strong_independence} can be extended to Conjecture~\ref{conj:small_field} below which does not require any conditions on the Cauchy-Schwarz complexity of the system.  We leave this for future work.

\begin{conjecture}
\label{conj:small_field}
Fix a prime $p$ and $d \ge 1$. Let $\mathcal{L}=\{L_1,\ldots,L_m\}$ be a system of linear forms over $\F_p$ such that $L_1^{d+1}$ is not linearly dependent on $L_2^{d+1},\ldots,L_m^{d+1}$. Then for every $\eps>0$ there exists $\delta>0$ such that if $f_i:\F_p^n \to \D$ are functions such that $\|f_1\|_{U^{d+1}} \le \delta$ then
$$
\left| \E_{\X}\left[\prod_{i=1}^m f_i (L_i(\X))\right] \right| \le \eps.
$$
\end{conjecture}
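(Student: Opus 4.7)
The plan is to combine the multi-function strong decomposition theorem with the strong orthogonality results of Section~\ref{sec:strongOrth}. First, the case $d \ge s$ (where $s$ is the Cauchy--Schwarz complexity) follows immediately from Lemma~\ref{lemma:cauchy-schwarz-lemma} together with the monotonicity $\|f_1\|_{U^{s+1}} \le \|f_1\|_{U^{d+1}}^{?}$ --- actually the converse inequality, so one notes instead that when $d \ge s$ the hypothesis $\|f_1\|_{U^{d+1}} \le \delta$ is stronger and the conclusion is immediate. Assuming $d < s \le p$, I would apply Lemma~\ref{lemma:decompose_multiple_func} with a uniformity parameter $\eta := \eps/(2m)$ and a growth function $r(\cdot)$ to be specified later, obtaining a polynomial factor $\mathcal{B}$ of degree at most $s$, complexity $C$, and rank at least $r(C)$, defined by homogeneous polynomials $P_1,\dots,P_C$, along with decompositions $f_i = h_i + h'_i$ where $h_i = \E(f_i|\mathcal{B})$ and $\|h'_i\|_{U^{s+1}} \le \eta$. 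A telescoping argument that peels off one factor at a time, bounding each difference via Lemma~\ref{lemma:cauchy-schwarz-lemma}, shows that it suffices to bound $|\E[\prod_i h_i(L_i(\X))]|$ by $\eps/2$.

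Next, I would use the higher-order Fourier expansion $h_i(x) = \sum_{\gamma \in \F_p^C} c_{i,\gamma} \exp(P_\gamma(x))$ with $P_\gamma = \sum_j \gamma(j) P_j$ and $|c_{i,\gamma}| \le 1$. Multiplying out, the target quantity becomes a sum over $(\gamma_1,\dots,\gamma_m) \in (\F_p^C)^m$ of terms of the form $(\prod_i c_{i,\gamma_i}) \cdot \E[\exp(\sum_i P_{\gamma_i}(L_i(\X)))]$. Since there are at most $p^{mC}$ summands, it is enough to bound each term by $\tau := p^{-mC}\eps/2$, and I would split according to whether $\deg(P_{\gamma_1}) \le d$ or $\deg(P_{\gamma_1}) > d$.

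In the low-degree case, I want $|c_{1,\gamma_1}|$ itself to be below $\tau$. Isolating $c_{1,\gamma_1}$ from the expansion of $h_1$ gives an identity of the form $c_{1,\gamma_1} = \langle h_1,\exp(P_{\gamma_1})\rangle - \sum_{\gamma' \ne \gamma_1} c_{1,\gamma'}\,\E[\exp(P_{\gamma'}-P_{\gamma_1})]$. For the first term, Observation~\ref{obs:project_factor} replaces $h_1$ by $f_1$, and since $\exp(P_{\gamma_1})$ is a phase polynomial of degree at most $d$, the direct theorem inequality~\eqref{eq:GowersCorrelationII} bounds it by $\|f_1\|_{U^{d+1}} \le \delta$. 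For each term in the sum, Theorem~\ref{thm:regularity} forces $P_{\gamma'}-P_{\gamma_1}$ to have small bias provided its rank is large; by choosing $r(\cdot)$ to dominate $r_{p,s}(\tau p^{-C}/2)$, each such exponential average is at most $\tau p^{-C}/2$, so the total contribution is at most $\tau/2$. Picking $\delta$ small enough relative to $p^{-mC_{\max}}\eps$ closes this case. In the high-degree case, Lemma~\ref{lem:LinearFormsBias} gives the dichotomy: either the exponential average itself is below $\tau$ (again by choosing $r(C) > r_{p,s}(\tau)$), or the polynomial $Q(\x) := \sum_i P_{\gamma_i}(L_i(\x))$ vanishes identically.

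The remaining obstacle, and the genuinely new ingredient, is to derive a contradiction from $Q \equiv 0$ when $t := \deg(P_{\gamma_1}) > d$. Extracting the top-degree homogeneous component (permissible because $t < p$) yields $\sum_i P_{\gamma_i}^{(t)}(L_i(\x)) \equiv 0$. I would then pick any monomial $M$ that appears in $P_{\gamma_1}^{(t)}$ with nonzero coefficient $\alpha_1$, let $\alpha_i$ be the coefficient of $M$ in $P_{\gamma_i}^{(t)}$ (possibly zero), and specialize each vector variable to the diagonal $x_j = (z(j),\dots,z(j))$ for new scalars $z(1),\dots,z(k)$. This collapses $M(L_i(\x))$ into $(L_i(z))^t$ after a standard computation, producing the scalar polynomial identity $\sum_i \alpha_i (L_i(z))^t \equiv 0$ in the $z(j)$, which (since $t < p$) is equivalent to the tensor relation $\sum_i \alpha_i L_i^t = 0$ with $\alpha_1 \ne 0$. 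Thus $L_1^t$ lies in the span of $L_2^t,\dots,L_m^t$, and a short separate observation shows that linear dependence of $L_1^t$ on the higher tensors of the other forms pulls back to a dependence of $L_1^{d+1}$ on $L_2^{d+1},\dots,L_m^{d+1}$ (since $t \ge d+1$), contradicting the hypothesis. The delicate bookkeeping is the joint choice of $\delta$, $\eta$, and the growth function $r(\cdot)$ so that all the rank-based regularity estimates go through simultaneously; this is routine once the outline above is in place.
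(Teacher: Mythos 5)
There is a genuine gap: what you have written is, almost line for line, the paper's proof of Theorem~\ref{thm:strong_independence}, and that proof really does need the hypothesis that the Cauchy--Schwarz complexity $s$ of $\mathcal{L}$ is at most $p$ --- a hypothesis which the statement you are asked to prove deliberately omits. (The paper states this result only as Conjecture~\ref{conj:small_field} and explicitly leaves it open.) The slip is visible in your own phrasing: after disposing of the case $d \ge s$ you write ``Assuming $d < s \le p$,'' but nothing in the conjecture gives you $s \le p$. All the subsequent steps lean on this. Concretely: (a) Lemma~\ref{lemma:decompose_multiple_func} only produces polynomial factors of degree $<p$, whereas your telescoping step needs a factor of degree $s$ so that $\|h_i'\|_{U^{s+1}} \le \eta$ can be fed into Lemma~\ref{lemma:cauchy-schwarz-lemma}; when $s \ge p$ the relevant inverse theorem (Tao--Ziegler) returns \emph{non-classical} phase polynomials, not functions of the form $\exp(P)$ for $P \in {\rm Poly}_s(\F_p^n)$, so the expansion $h_i = \sum_\gamma c_{i,\gamma}\exp(P_\gamma)$ is no longer available. (b) The entire Section~\ref{sec:strongOrth} machinery --- Lemma~\ref{lem:LinearFormsBias} and the symmetric multilinear form $B$ with $P(x)=B(x,\ldots,x)$ --- requires $\deg P < p$ (one must divide by $d!$, and the coefficients $\ell_d(\uu)$ must be nonzero mod $p$), so your dichotomy ``small bias or $Q\equiv 0$'' is not justified for the degree-$s$ polynomials defining the factor. (c) Your extraction of the top-degree homogeneous component from $\sum_i P_{\gamma_i}(L_i(\x)) \equiv 0$, and the final specialization yielding $\sum_i \alpha_i (L_i(z))^t \equiv 0 \Leftrightarrow \sum_i \alpha_i L_i^t = 0$, both invoke $t<p$, which again is only guaranteed when $d < s \le p$.

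In short, the argument is a correct proof of Theorem~\ref{thm:strong_independence} but not of the conjecture: removing the restriction $s \le p$ is precisely the missing (and genuinely new) content, and it would at minimum require reworking the decomposition and the strong-orthogonality lemmas in the language of non-classical polynomials. If your intention was to prove the statement only under the additional hypothesis $s \le p$, you should say so explicitly --- but then you have proved the paper's main theorem, not the conjecture.
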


\bibliographystyle{plain}
\bibliography{complexity}

\end{document}